\documentclass[12pt]{amsart}
\usepackage[english]{babel}
\usepackage{amssymb,comment,stmaryrd,enumitem}

\setlength{\topmargin}{0pt}
\setlength{\headheight}{28pt}
\usepackage{hyperref}
\setlength{\textwidth}{16.5cm}
\setlength{\textheight}{23cm}
%\addtolength{\textheight}{3cm}
\setlength{\oddsidemargin}{0pt}
\setlength{\evensidemargin}{0pt}
\setlength{\headsep}{18pt}
\voffset=-0.5in

\newtheorem{prop}{Proposition}[section]
\newtheorem{lemma}[prop]{Lemma}
\newtheorem{lem}[prop]{Lemma}
\newtheorem{cor}[prop]{Corollary}
\newtheorem{thm}[prop]{Theorem}

\theoremstyle{definition}

\newtheorem*{rmk}{Remark}

\renewcommand{\d}{\mathbb{V} }
\newcommand{\N}{\mathbb{N}}
\newcommand{\R}{\mathbb{R}}
\newcommand{\E}{\mathbb{E}}

\newcommand\dinf[1]{\underline{\mathrm{d}}\big(#1\big)}
\newcommand\dsup[1]{\overline{\mathrm{d}}\big(#1\big)}
\newcommand\dens[1]{{\mathrm{d}}\big(#1\big)}
\begin{document}
\noindent

\title{On sum-product bases }

\author[F. Hennecart]{Fran\c cois Hennecart}
\address{F. Hennecart, Univ Lyon, UJM-Saint-\'Etienne, CNRS, ICJ UMR 5208, 42023
Saint-\'Etienne, France}
\email{francois.hennecart@univ-st-etienne.fr}

\author[G. Prakash]{Gyan Prakash} 
\address{G. Prakash, Harish-Chandra Research Institute, HBNI, Jhunsi, Prayagraj (Allahabad) -211 019, India}
\email{gyan@hri.res.in}

\author[E. Pramod]{E. Pramod}
\address{E. Pramod, Harish-Chandra Research Institute, HBNI, Jhunsi, Prayagraj (Allahabad) -211 019, India}
\email{epramod@hri.res.in}

%\date{\today}
\thanks{This paper  has been prepared and written within the framework of the IFCPAR/CEFIPRA project 5401-1}

\begin{abstract}
Besides various asymptotic results on the concept of sum-product bases in $\mathbb{N}_0$,
we consider by probabilistic arguments the existence of \emph{thin sets}  $A,A'$ of integers such that 
$AA+A=\mathbb{N}_0$ and $A'A'+A'A'=\mathbb{N}_0$.
 
\end{abstract}
\maketitle

\section{\bf Introduction}

Additive bases, and in less importance multiplicative bases, have been extensively studied for several centuries.
More recently, \emph{expanding polynomials} (of course with more than one variable) arise in this scope, whose point is to study the expansion of finite sets under polynomials. If $f\in\mathbb{Z}[x_1,x_2,\dots,x_d]$ and $A$ be contained in a given subset $R$ of a commutative ring, then let $f(A,A,\dots,A)$ (with $k$ arguments) denote the set of all terms $f(a_1,a_2,\dots,a_k)$ where the $a_i$'s come from $A$.
The polynomial $f$ is called an \emph{expander} if there exists $\delta>0$ such that $|f(A,\dots,A)|>|A|^{1+\delta}$ for any finite set $A$, where $|B|$ denotes the cardinality of a finite set $B$. If $R$ is finite, as for instance $\mathbb{F}_q$ or $\{1,\dots,N\}$, we need to restrict the above definition by assuming that $|R|^{\varepsilon}<|A|<|R|^{1-\varepsilon}$, for some $\varepsilon>0$.     
A more restrictive notion is the one of \textit{covering polynomial}: is there a non trivial \emph{minimal size} such that if $A$ attains it then $f(A,A,\dots,A)$ entirely covers $R$ ? 

We shall use the notation $AB$ to denote the set of elements $x$ such that $x=ab$ for some $a\in A$ and $b\in B.$  When $A=B$, we use the notation $A^2=AA$ and by extension $A^k=AA^{k-1}$, for $k>1$ with the convention $A^1=A.$
We shall focus on $R=\mathbb{N}_0$, the set of all nonnegative integers and the two special polynomials $x+yz$ and $xy+zt$ which  are known to be expanders in different contexts (cf. \cite{BO,HH0}).They also bring to light the important sum-product phenomenon. 
It can also be enlightened  by their ability to break the natural threshold for the size of a set $A$ satisfying $f(A,A,\dots,A)=R$ that can be deduced from the sum or the product taken separately.
More precisely and taking an instance, the set $A^2+A$ contains both $Aa_0+A$ and $A^2+a_0$ provided that $a_0\in A$. But we can expect to find sets $A$ such that $A^2+A=R$ which are \textit{much smaller}, with respect to their \emph{size}, than sets satisfying  $Aa_0+A=R$ or $A^2+a_0=R$.

We call $A$ to be a \emph{$f$-sum-product basis} for $R$ if $f(A,A,\dots,A)=R$. When $R$ is finite, the measure of the size $A$ of a set could be its cardinality. For infinite $R$, and mainly $\mathbb{N}_0$, we can use an appropriate notion of  counting function of a set $A$ or an appropriate notion of its density. 
% When  $A \subset \mathbb{N}_0$, the set of non-negative integers  and $X$ is a positive real number, we write $A(X)$ to denote the number of positive integers in $A$ not exceeding $X$.
\\\\
\textbf{Notation}. We let
$\mathbb{N}:=\mathbb{N}_0\smallsetminus\{0\}$ be the set of positive integers. \\ 
% For $A$ a finite set, let $|A|$ be denote its cardinality. \\
For $A\subset\mathbb{N}_0$ and $X>0$, let $A(X):=|A\cap[1,X]|$ and 
$$
\dinf{A}:=\liminf_{X\to\infty} X^{-1}A(X), \quad \dsup{A}:=\limsup_{X\to\infty}X^{-1}A(X),
$$
called respectively the \emph{lower density} and \emph{upper density} of $A$. We let $\dens{A}$ denote their common value if it is the case and call it the \emph{density} of $A$.\\
We shall use the symbols $\ll,\gg, \sim$ in the usual way.
The notation $g(x)\asymp f(x)$ means $f(x)\ll g(x)\ll f(x)$ for any $x$ large enough. All the implied constants in Vinogradov's symbol $\ll$  are generally absolute. If they depend upon $\varepsilon$, we write $\ll_{\varepsilon}$.

In this paper
we shall  study those subsets $A$ of natural numbers such that the set
\begin{equation*}
A^k +A^l
\end{equation*}
contains all sufficiently large natural numbers or at least has positive lower density, where $k,l$ are positive integers and $\max(k, l) \geq 2.$
Clearly if we want $A^2+A$ (resp. $A^2+A^2$) to cover all the positive integers, or at least to have a positive lower density, one needs $A(X)\gg X^{1/3}$ (resp.   $A(X)\gg X^{1/4}$).
Since there exist additive bases $B$ of order $2$ with counting function $B(X)\ll \sqrt{X}$, one may hope to find a set $A$ such that $A(X) = o(\sqrt{X})$ in both the particular discussed cases.
On the other hand, thin multiplicative bases of order $2$, that is sets $A$ satisfying $A^2=\mathbb{N}$, cannot be too small since they must contain all the primes, hence $A(X)\gg X/\log X$ (see the recent \cite{Pa} for recent progress on the subject). This 
suggests us that the gain below $\sqrt{X}$ cannot be more than a power of $\log X$.  In 
Section~\ref{gen_asymp}, we shall prove the following result.

\begin{thm}\label{limsuplb}
Let $k \geq l$ be positive integers with $k \geq 2$ and $A \subset \mathbb{N}$ such that the set $A^k+ A^l$ has a positive lower density. Then for infinitely many positive integers $X$, we have

$$ A(X) \gg \frac{\sqrt{X}}{\log^{\alpha(k,l)}X},$$ 

where $\alpha(k,l) =  \frac{k+l-2}{k+l}.$

\end{thm}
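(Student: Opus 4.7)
The plan is to argue by contradiction. Suppose $A(Y) \leq K\, Y^{1/2}/\log^{\alpha(k,l)} Y$ for all sufficiently large $Y$, where $K$ is a small positive parameter to be chosen at the end. Writing $P_j(X) := |A^j\cap[1,X]|$, the positive lower density of $A^k+A^l$ together with the trivial inequality $|(A^k+A^l)\cap[1,X]|\leq P_k(X)P_l(X)$ gives $P_k(X)P_l(X) \gg X$ for all large $X$. The entire task reduces to controlling $P_k(X)P_l(X)$ from above under the hypothesis on $A$ and showing that the resulting bound becomes incompatible with this lower bound once $K$ is taken small enough.

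The core ingredient is the inductive estimate
\[
P_j(Y) \ll_j K^j\, \frac{Y^{1/2}}{\log^{\beta_j}Y}, \qquad \beta_j := j\alpha(k,l) - (j-1),
\]
to be proved by induction on $j \geq 1$, with the base case $j=1$ being the hypothesis itself. For the inductive step, I would use the factorization principle: every $n\in A^j\cap[1,X]$ admits a factorization $n=a\cdot m$ with $a\in A$ its smallest factor (so $a\le X^{1/j}$) and $m\in A^{j-1}$, yielding
\[
P_j(X) \leq \sum_{a\in A,\,a\leq X^{1/j}} P_{j-1}(X/a).
\]
Substituting the inductive bound on $P_{j-1}$ and noting that $\log(X/a)\asymp \log X$ throughout this range of $a$, the computation reduces to estimating $\sum_{a\in A,\,a\leq Z} a^{-1/2}$. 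A straightforward Abel summation using the standing bound on $A$ gives this sum as $\ll K(\log Z)^{1-\alpha(k,l)}$: since $\alpha(k,l)<1$, the integral $\int_2^Z dt/(t\log^{\alpha(k,l)}t) \asymp(\log Z)^{1-\alpha(k,l)}$ dominates the boundary term $A(Z)/Z^{1/2}$. This produces exactly the factor needed to pass from $\beta_{j-1}$ to $\beta_j = \beta_{j-1}+\alpha(k,l)-1$.

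Applying the estimate with $j=k$ and $j=l$, the key algebraic coincidence is that $\beta_k+\beta_l = (k+l)\alpha(k,l) - (k+l-2) = 0$ by the very definition of $\alpha(k,l)$, and so
\[
P_k(X)\,P_l(X) \ll_{k,l} K^{k+l}\,X.
\]
Choosing $K$ smaller than a suitable absolute threshold $K_0 = K_0(k,l)$ then produces the required contradiction with $P_k(X)P_l(X)\gg X$, whence $A(X) > K_0\, X^{1/2}/\log^{\alpha(k,l)}X$ must hold for arbitrarily large $X$. The most delicate point is the bookkeeping of constants in the induction: although the implicit constant in the bound for $P_j$ will depend on $j$, the dependence on $K$ must genuinely be $K^j$, as it is this scaling that drives the contradiction when $K\to 0$. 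A secondary wrinkle is that $\beta_j$ can become negative for intermediate $j$ when $k>l$, so in the inductive step one has to verify that $\log^{-\beta_{j-1}}(X/a) \ll \log^{-\beta_{j-1}}X$ holds uniformly for $a\leq X^{1/j}$ regardless of the sign of $\beta_{j-1}$.
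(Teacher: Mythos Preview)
Your proposal is correct and follows essentially the same route as the paper: bound $A^j(X)$ inductively via partial summation (the paper packages this as Lemma~2.1 and Corollary~2.3), then combine $A^k(X)\cdot A^l(X)\geq |(A^k+A^l)\cap[1,X]|\gg X$ with the exponent identity $\beta_k+\beta_l=0$. Your version is slightly tidier in two respects---you track the constant $K^j$ explicitly (which the paper leaves implicit but needs for the contradiction), and your smallest-factor restriction $a\le X^{1/j}$ makes the replacement $\log(X/a)\asymp\log X$ immediate, whereas the paper integrates over the full range $2\le a\le X/2$ and picks up a Beta-function constant.
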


In Section~\ref{gen_asymp}, we shall also prove the following result.

\begin{thm}\label{ub}
There exists an $A\subset \N_0$ such that $A^k + A = \N_0$ and for all sufficiently large $X$, we have
$$
A(X) \ll \frac{\sqrt{X}}{\log^{\alpha(k)}X},
$$ where $\alpha(k) = \frac{k-2}{k+1}.$
\end{thm}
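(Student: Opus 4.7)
The plan is to produce $A$ by the Erd\H{o}s--R\'enyi probabilistic method. Fix a large constant $C$ and a threshold $m_0$. Form a random set $A$ by placing every integer $m<m_0$ in $A$ deterministically and, for $m\geq m_0$, including $m$ in $A$ independently with probability
$$
p_m=\frac{C}{m^{1/2}(\log m)^{\alpha(k)}}.
$$
An elementary integral estimate gives $\E[A(X)]\asymp \sqrt{X}/(\log X)^{\alpha(k)}$, and a Chernoff bound combined with Borel--Cantelli shows that almost surely $A(X)\ll\sqrt{X}/(\log X)^{\alpha(k)}$ for all sufficiently large $X$.

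The core task is to verify that, almost surely, $A^k+A$ contains every sufficiently large integer. For each $n$, let $r(n)$ count the $(k+1)$-tuples $(a,a_1,\dots,a_k)$ with $a,a_1,\dots,a_k\in A$ and $a+a_1\cdots a_k=n$. Writing $m=a_1\cdots a_k$ and restricting to $m\leq n/2$ (so that $a=n-m\asymp n$), and noting that for ``generic'' factorizations each $\log a_i$ is of order $(1/k)\log m$, one obtains
$$
\E[r(n)]\gg \frac{1}{\sqrt{n}(\log n)^{\alpha(k)}}\sum_{m_0\leq m\leq n/2}\frac{\tau_k(m)}{\sqrt{m}(\log m)^{k\alpha(k)}}.
$$
Using the classical asymptotic $\sum_{m\leq M}\tau_k(m)\sim M(\log M)^{k-1}/(k-1)!$ and partial summation gives
$$
\sum_{m_0\leq m\leq n/2}\frac{\tau_k(m)}{\sqrt{m}(\log m)^{k\alpha(k)}}\asymp\sqrt{n}(\log n)^{k-1-k\alpha(k)},
$$
whence $\E[r(n)]\gg (\log n)^{k-1-(k+1)\alpha(k)}=\log n$, the equality coming from the choice $\alpha(k)=(k-2)/(k+1)$.

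To upgrade this to an almost-sure covering statement, I would apply Janson's inequality to the family of events $B_\omega=\{a,a_1,\dots,a_k\in A\}$ indexed by the admissible tuples $\omega=(a,a_1,\dots,a_k)$ for $n$. Two tuples $\omega,\omega'$ are dependent exactly when they share at least one coordinate value. A classification of such pairs, according to how many and which coordinates coincide, should give the dependency sum $\Delta=\sum_{\omega\neq\omega',\,\omega\sim\omega'}\mathbb{P}[B_\omega\cap B_{\omega'}] = O(\E[r(n)])$. Janson's inequality then yields $\mathbb{P}[r(n)=0]\leq \exp(-c\,\E[r(n)])\leq n^{-2}$ once $C$ is chosen large enough, and Borel--Cantelli guarantees that almost surely $r(n)\geq 1$ for all but finitely many $n$. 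The exceptional values of $n$ can be added to $A$ by hand without disturbing the counting-function estimate.

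The chief obstacle is the bookkeeping in bounding $\Delta$: pairs sharing $j$ multiplicative coordinates contribute multiplicative convolutions of the $p_m$'s of the form $\sum_{a_1\cdots a_{k-j}=m'} p_{a_1}\cdots p_{a_{k-j}}$, and these must be handled so that the accumulated logarithmic losses do not overwhelm the main term $\E[r(n)]\asymp \log n$. A secondary (but easier) point is that the lower bound on $\E[r(n)]$ must hold for every large $n$, not just on average; this is taken care of by the averaging happening over the long range $m_0\leq m\leq n/2$.
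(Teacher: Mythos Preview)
The paper's proof is a short deterministic construction, not a random one. It fixes a set $P_1$ of primes with $P_1(X)\asymp\sqrt{X}/(\log X)^{\alpha(k)}$, uses that products of $k$ primes have bounded multiplicity to deduce $P_1^k(X)\gg\sqrt{X}(\log X)^{k-1-k\alpha(k)}$, and then invokes Lorentz's additive-complement theorem to produce $B$ with $P_1^k+B\supseteq\N$ up to a finite set and $B(X)\ll\sqrt{X}/(\log X)^{k-2-k\alpha(k)}$. Taking $A=P_1\cup B$ and noting that $\alpha(k)=(k-2)/(k+1)$ is chosen precisely so that $\alpha(k)=k-2-k\alpha(k)$ finishes the argument in a few lines.

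Your probabilistic route has a real obstruction for $k\geq 6$, and it lies exactly where you flag the ``chief obstacle'': the claim $\Delta=O(\mu)$ is false as stated. Consider pairs of tuples sharing a single multiplicative coordinate $a_1=a_1'=v$. For fixed bounded $v$ the inner sum
\[
f(v)=\sum_{a+va_2\cdots a_k=n}p_a\,p_{a_2}\cdots p_{a_k}
\]
is, by the same partial summation you used for $\mu$ (now with $k-1$ free multiplicative factors, and using the identity $k-2-k\alpha(k)=\alpha(k)$), of order $C^{k}\,v^{-1/2}(\log n)^{\alpha(k)}$. Hence this block of $\Delta$ contributes $\sum_v p_v f(v)^2\asymp(\log n)^{2\alpha(k)}$, the $v$-sum being dominated by small $v$ and convergent. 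Since $2\alpha(k)>1$ precisely when $k\geq 6$, this already gives $\Delta\gg\mu$, and even the extended Janson bound $\exp(-c\,\mu^2/\Delta)=\exp\!\big(-c(\log n)^{2-2\alpha(k)}\big)$ is not summable over $n$. A natural repair is to redefine $r(n)$ to count only representations with every $a_i>n^{\varepsilon}$; this keeps $\mu_n\gg\log n$ (the generic factorisations already satisfy the constraint) while making the offending $v$-sum $O(n^{-\varepsilon/2})$, but this is a substantive modification of the setup, not bookkeeping. For $k\le 5$ your outline is plausible; for $k\ge 6$ it needs this fix or something like it.
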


The probabilistic method remains an efficient method for proving the existence of thin bases  by controlling the asymptotic behaviour in a probabilistic way.
Nevertheless it could not provide optimally thin bases by a sufficiently general model. 

\medskip
In Section \ref{asymp_behav}, we study the possible deviation in the behaviour of the counting function $A(X)$ in the family of all  sets $A$ such that $A^2+A=\mathbb{N}_0$.  

\medskip\noindent
The existence of a set $A\subset\N_0$ such that $A^2+A=\mathbb{N}_0$ and $A(X)=o(\sqrt{X})$ is not yet solved.
We only mention that the dyadic set
$$
T=\{2\}\cup \left\{
\sum_{i=0}^k\varepsilon_i2^{2i},\ k\ge0,\ \varepsilon_i \in\{0,1\}
\right\}
$$
satisfies $T^2+T\supseteq 2\cdot T+T=\mathbb{N}$ and $\limsup_{X\to\infty}\frac{T(X)}{\sqrt{X}}=\sqrt{3}$. 
%This can be compared to the lower bound 
%$\limsup_{X\to\infty}\frac{A(X)}{\sqrt{X}}=\sqrt{\frac 8{\pi}}=1.595\dots$ for additive basis of order $2$ (see \cite{Ca}).
\\[0.5em]
In Section \ref{thin_almost}, we will show
\begin{thm}\label{locthin}
For any positive increasing function $\phi(X)$ going to infinity as $X \rightarrow \infty$, there exists a set $A\subset\N$ such that $\dens{A^2+A}=1$ and $\liminf_{X\to\infty}A(X)(X \phi(X))^{-1/3}<\infty$.
\end{thm}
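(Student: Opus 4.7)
My plan is a probabilistic block construction along a very rapidly growing sequence of milestones $X_n$ (taking, say, $X_{n+1}=X_n^{3}$, which ensures $X_{n-1}^2/X_n\to 0$). For each $n$, I will set $L_n := \lceil (X_n\phi(X_n))^{1/3}\rceil$ and let the stage-$n$ portion of $A$ be $A_n := D_n \cup M_n \subset (X_{n-1}, X_n]$, where $D_n = \{X_{n-1}+1,\ldots,X_{n-1}+L_n\}$ is a dense island of consecutive integers (providing the summand $a_3$), and $M_n$ is a uniformly random subset of $(X_{n-1}+L_n, \sqrt{X_n}\,]$ of size $L_n$ (providing the factors $a_1, a_2$). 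I then take $A := \bigcup_n A_n$.

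To cover a given $x \in (X_{n-1}, X_n]$ via $x = a_1 a_2 + a_3$ with $a_3 \in D_n$, the value $a_1 a_2$ must lie in the window $W_x := (x - X_{n-1} - L_n,\, x - X_{n-1} - 1]$ of length $L_n$. With $|M_n| = L_n$, the product set $M_n \cdot M_n$ has up to $L_n^2$ elements in its natural range inside $(X_{n-1}^2, X_n]$, so the expected number of such products landing in $W_x$ is on the order of $L_n \cdot L_n^2 / X_n = L_n^3 / X_n = \phi(X_n)$. A first-moment and concentration argument should then show that for a typical random $M_n$, the density of $x$'s whose window $W_x$ contains no product is at most $e^{-c\phi(X_n)}$, which tends to $0$. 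Summing over blocks and handling separately the subrange $(X_{n-1}, X_{n-1}+X_{n-1}^2]$ where $M_n \cdot M_n$ does not reach (which has density $X_{n-1}^2/(X_n-X_{n-1}) \to 0$, and in any case can be covered by cross-stage products $M_k \cdot M_n$ with $k < n$), I would obtain $\dens{A^2+A} = 1$.

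For the counting, $A(X_n) \le \sum_{k \le n} 2L_k$, which by the rapid growth of $X_n$ is dominated by its last term $2L_n = O((X_n\phi(X_n))^{1/3})$, yielding $\liminf_{X\to\infty} A(X)(X\phi(X))^{-1/3} < \infty$.

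The main obstacle will be making the probabilistic concentration argument fully rigorous: the products $m_1 m_2$ for $m_1, m_2 \in M_n$ are correlated through their shared factors, so one needs a careful second-moment estimate, e.g., controlling the multiplicative energy of the random set $M_n$ (which should be of order $L_n^2\log L_n$ for a typical $M_n$). A secondary difficulty is to patch the low end of each block using cross-stage products, although the growth $X_n \gg X_{n-1}^2$ makes that subrange asymptotically negligible in density.
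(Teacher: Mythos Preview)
Your plan has a real gap, and it is precisely the one you flag as a ``secondary difficulty'' but then dismiss. The requirement $\dens{A^2+A}=1$ means $|(A^2+A)\cap[1,X]|/X\to 1$ for \emph{every} $X\to\infty$, not merely along the subsequence $X=X_n$. In your construction the stage-$n$ products $M_n\cdot M_n+D_n$ only reach down to about $(X_{n-1}+L_n)^2\asymp X_n^{2/3}\phi(X_n)^{2/3}$, and even throwing in $D_n\cdot D_n+D_n$ only gets you down to $X_{n-1}^2=X_n^{2/3}$. So for $X\approx X_n^{2/3}$ virtually nothing in $(X_{n-1},X]$ is covered by stage $n$, while previous stages contribute only $O(X_{n-1})=O(X_n^{1/3})$ covered integers; hence $|(A^2+A)\cap[1,X]|/X\asymp X_n^{-1/3}\to 0$. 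Your remark that the bad subrange has density $X_{n-1}^2/(X_n-X_{n-1})\to 0$ is correct only at the endpoint $X=X_n$ and is exactly the wrong way to measure things here. The suggested patch via cross-stage products $M_k\cdot M_n$ does not obviously close the hole either: for instance $M_{n-1}\cdot M_n$ lives in roughly $[X_n^{4/9}\phi^{2/3},X_n^{2/3}]$, so the ranges for successive $k$ only barely abut, and one must in addition prove density $1-o(1)$ \emph{uniformly in $X$} throughout each such range, which is a separate (and nontrivial) probabilistic statement.

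The paper's architecture is designed exactly to avoid this. It alternates two regimes along a tower $N_{k+1}=N_k^3$: the full intervals $[N_{2k-1},N_{2k}]$ are placed in $A_0$ (together with $\{0,1\}$), so that $[2N_{2k-1},2N_{2k}]\subset A_0+A_0\subset A_0^2+A_0$ is covered \emph{completely}; on the complementary ranges $[2N_{2k},2N_{2k+1}]$ a carefully chosen set of primes $P_{2k+1}$ makes $P_{2k+1}^2$ well spread, and a Ruzsa-type complement lemma (the paper's Lemma~5.1) produces $B_{2k+1}$ with the crucial feature that the uncovered part of $[2N_{2k},t]$ is $\le\varepsilon t$ for \emph{all} $t$ in that range, not just the top. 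Taking $\varepsilon=\varepsilon_k=\phi(N_{2k+1})^{-1/3}\to 0$ then gives density $1$, while $|[N_{2k-1},N_{2k}]|=N_{2k+1}^{1/3}$ and $|P_{2k+1}|+|B_{2k+1}|\ll\varepsilon_k^{-1}N_{2k+1}^{1/3}$ give the desired $\liminf$ bound. If you want to rescue your random-$M_n$ idea, you will need an analogue of that uniform-in-$t$ complement lemma, or else insert genuine full intervals of length $\asymp X_n^{1/3}$ (not just $L_n$) to bridge the low end of each block.
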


\medskip\noindent
In Section \ref{basic-probab}, we give the necessary tools of probability theory. 
 
 \begin{comment}
\medskip\noindent
In Section \ref{prob_S2+S} we give a probabilistic construction of a set $A$ such that $A^2+A=\mathbb{N}$
and its counting function $A(X)$ is less by a factor to that can be obtained by a known deterministic construction. 
\end{comment}

\medskip\noindent
In Section \ref{prob_S2+S2}, we construct a thin set $A$ such that $A^2+A^2=\mathbb{N}_0$ and whose counting function satisfies $A(X)=o(\sqrt{X})$. More precisely, we prove the following result.

\begin{thm}\label{thm13}
There exists $A \subset \N_0$ with $A^2 + A^2 = \N_0$ and $A(X) = O\left(\frac{\sqrt{X}}{\log^{1/4}X}\right).$
\end{thm}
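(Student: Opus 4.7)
The plan is to use the probabilistic method of Erd\H{o}s. Fix a large constant $C > 0$ and, independently for each $n \geq n_0$, include $n$ in a random set $A \subset \N_0$ with probability
\[
p_n = \min\bigl(1,\, C n^{-1/2}(\log n)^{-1/4}\bigr);
\]
prepend a fixed deterministic block $\{0, 1, \dots, n_0\}$ to $A$. First, $\mathbb{E}[A(X)] = \sum_{n \le X} p_n \ll \sqrt{X}/(\log X)^{1/4}$, and a standard Chernoff / Borel--Cantelli argument along the dyadic scale $X = 2^k$, combined with the monotonicity of $A(X)$, yields the counting bound $A(X) = O(\sqrt{X}/(\log X)^{1/4})$ almost surely.

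The substance lies in showing that almost surely $r(n) := \#\{(a,b,c,d)\in A^4 : ab + cd = n\} \geq 1$ for all sufficiently large $n$. Set $f(m) := \sum_{ab = m} p_a p_b$, so that $\mu_n := \mathbb{E}[r(n)] = \sum_{m=1}^{n-1} f(m)f(n-m)$. Evaluating the underlying double sum as a Beta-type integral after the substitutions $a = X^s$, $b = X^t$ gives $\sum_{m \le X} f(m) \asymp \sqrt{X}(\log X)^{1/2}$ and, by another convolution, $\mu_n \asymp \log n$. The exponent $1/4$ in $p_n$ is precisely the threshold at which $\mu_n$ diverges only logarithmically, which is what fixes the $(\log X)^{1/4}$ in the counting bound.

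To pass from $\mu_n \to \infty$ to the almost-sure covering condition, I apply Janson's inequality: $\Pr(r(n) = 0) \le \exp\bigl(-\mu_n^2/(2(\mu_n + \Delta_n))\bigr)$, where $\Delta_n$ sums $p$-weights over ordered pairs of distinct quadruples $I, J$ with $ab + cd = a'b' + c'd' = n$ sharing at least one coordinate. Grouping by the pattern of shared coordinates, the dominant pattern (one shared coordinate, say $a = a'$) reduces to bounding
\[
\sum_a p_a\Bigl(\sum_b p_b\, f(n - ab)\Bigr)^2,
\]
which, by $f(m) \ll (\log n)^{1/2} m^{-1/2}$ on average and the convergence of $\sum_a a^{-3/2}(\log a)^{-1/4}$, is $O((\log n)^{1/2}) = o(\mu_n)$. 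Patterns with more shared coordinates are smaller by a power of $n$. Hence $\Delta_n \ll \mu_n$ and Janson gives $\Pr(r(n) = 0) \ll \exp(-c\log n) = n^{-c}$ with $c = c(C) \to \infty$ as $C \to \infty$. Borel--Cantelli then forces $r(n) \ge 1$ almost surely for all sufficiently large $n$; the finitely many uncovered residues are absorbed by the initial deterministic block in $A$, without affecting the counting asymptotics.

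The main obstacle is the careful case analysis for $\Delta_n$: one must enumerate each of the finitely many coincidence patterns between two quadruples subject to $ab + cd = a'b' + c'd' = n$, and in each case exhibit a genuine saving against the naive bound $\mu_n^2/p_{\mathrm{shared}}$. The control uses both the average-sized divisor estimate $f(m) \ll (\log m)^{1/2} m^{-1/2}$ and the pointwise bound $f(m) \ll_\varepsilon m^{-1/2 + \varepsilon}$, together with the second arithmetic constraint $a'b' + c'd' = n$; this is where the specific exponent $1/4$ is forced, since any larger log-power in $p_n$ would already destroy the $\Delta_n = o(\mu_n)$ margin that Janson needs.
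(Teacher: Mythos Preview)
Your plan is the paper's plan: random $A$ with $p_a\asymp a^{-1/2}(\log a)^{-1/4}$, Chernoff and Borel--Cantelli for the counting function, and Janson's inequality with a coincidence-pattern decomposition of $\Delta_n$ for the covering. One simplifying device in the paper that you omit: since $I_{\{R(n)\ne0\}}$ is monotone increasing in the coordinates, one may replace each $(\log a)^{-1/4}$ by the smaller uniform weight $(\log n)^{-1/4}$ when bounding $\mathbb{P}(R(n)=0)$ from above; this strips out the variable-by-variable $\log$ factors and makes every subsequent sum a clean divisor sum.

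The genuine gap is in your $\Delta_n$ sketch. The assertion that ``patterns with more shared coordinates are smaller by a power of $n$'' is false. In the paper's five-case analysis the pattern $a=a',\,c=c'$ (one shared factor in \emph{each} of the two products) contributes $\ll\sqrt{\log n}\,\log\log n$; the pattern $a=a',\,b=d'$ contributes $\ll\mu_n/\sqrt{\log n}$; and the three-coincidence pattern $a=a',\,b=d',\,c=c'$ contributes $\ll\mu_n/(\log n)^{1/4}$. None of these is a power saving, and the last two are only barely $o(\mu_n)$. The $a=a',c=c'$ case in particular needs real work: after extracting $q=\gcd(a,c)$ one parametrizes the solutions of $\alpha b+\gamma d=n/q$ as a one-parameter arithmetic progression and must sum $(bdb'd')^{-1/2}$ over \emph{pairs} of lattice points, isolating separately the boundary points where $b$ or $d$ drops below the common difference. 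Your one-coincidence estimate is also not justified as written: inserting an \emph{average} bound $f(m)\ll(\log n)^{1/2}m^{-1/2}$ pointwise into $\sum_b p_b\,f(n-ab)$ and then invoking convergence of $\sum_a a^{-3/2}$ ignores the many $m=n-ab$ with large $\tau(m)$. The paper splits at $a\gtrless n^{\varepsilon}$, uses $\tau(n-ab)\ll_\varepsilon a^{\varepsilon}$ uniformly in $b$ for $a\ge n^{\varepsilon}$, and a separate argument for small $a$, to prove $\sum_a a^{-1/2}S_a^2\ll\log^2 n$ with $S_a=\sum_b\tau(n-ab)/\sqrt{b(n-ab)}$. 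The conclusion $\Delta_n=o(\mu_n)$ is correct, but the route you outline does not reach it.
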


% \medskip\noindent
% \textbf{Notation}. We denote by $\mathbb{N}_0$ the set of all nonnegative integers while
% $\mathbb{N}=\mathbb{N}\smallsetminus\{0\}$. \\ 
% For $A$ a finite set, let $|A|$ be denote its cardinality. \\
% For $A\subset\mathbb{N}$ and $X>0$, let
% $A(X)=|A\cap[1,X]|$ and 
% $$
% \dinf{A}=\liminf_{X\to\infty} X^{-1}A(X), \quad \dsup{A}=\limsup_{X\to\infty}X^{-1}A(X).
% $$
% We let $\dens{A}$ to be  their common value if it is.\\
% We shall use the symbols $\ll,\gg, \sim$ in the usual way.
% The notation $g(x)\asymp f(x)$ means 
% $f(x)\ll g(x)\ll f(x)$ for any $x$ large enough. All the implied constant in Vinogradov's symbol $\ll$  are generally absolute. If they depends upon $\varepsilon$, we write $\ll_{\varepsilon}$.

\section{\bf General asymptotic bounds}
\label{gen_asymp}
In this section, we shall prove Theorems \ref{limsuplb} and \ref{ub}. For this we need the following result, which follows by partial summation.
\begin{lem} \label{produb} Let $\alpha, \beta < 1$ (not necessarily positive) be real numbers.
Let $A, B \subset \N$ such that $A(X) \ll \frac{\sqrt{X}}{\log^{\alpha}X}
$ and $B(X) \ll \frac{\sqrt{X}}{\log^{\beta}X}$ for all sufficiently large $X$. Then for all sufficiently large $X$, we have
\begin{equation}\label{ubAB}
(AB)(X) \leq \sum_{\substack{ a \in A, b\in B\\
 ab \leq X}} 1 \ll \sqrt{X}\log^{1- \alpha-\beta}X.
 \end{equation}
\end{lem}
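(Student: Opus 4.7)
The plan is to prove the first inequality trivially and then bound the sum $S(X) := \sum_{a \in A,\, b \in B,\, ab \leq X} 1$ via partial summation.

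The first inequality $(AB)(X) \leq S(X)$ is immediate since every element $n \in AB \cap [1,X]$ admits at least one factorization $n = ab$ with $a \in A$, $b \in B$, $ab \leq X$, and is thus counted at least once by $S(X)$.

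For the main bound I would rewrite
$$S(X) = \sum_{\substack{a \in A \\ a \leq X}} B(X/a) = \sum_{\substack{b \in B \\ b \leq X}} A(X/b),$$
and split according to whether $a \leq \sqrt{X}$ or $b \leq \sqrt{X}$. Since any pair $(a,b)$ contributing to $S(X)$ must have $\min(a,b) \leq \sqrt{X}$, we get
$$S(X) \leq \sum_{\substack{a \in A \\ a \leq \sqrt{X}}} B(X/a) + \sum_{\substack{b \in B \\ b \leq \sqrt{X}}} A(X/b).$$
By symmetry it suffices to bound the first sum. When $a \leq \sqrt{X}$ we have $\log(X/a) \geq \tfrac{1}{2}\log X$, so the hypothesis on $B$ gives $B(X/a) \ll \sqrt{X/a}\,/\log^{\beta} X$, whence
$$\sum_{\substack{a \in A \\ a \leq \sqrt{X}}} B(X/a) \ll \frac{\sqrt{X}}{\log^{\beta} X} \sum_{\substack{a \in A \\ a \leq \sqrt{X}}} \frac{1}{\sqrt{a}}.$$

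The key step is then to evaluate $\Sigma(Y) := \sum_{a \in A,\, a \leq Y}\frac{1}{\sqrt{a}}$ for $Y = \sqrt{X}$ by partial summation:
$$\Sigma(Y) = \frac{A(Y)}{\sqrt{Y}} + \frac{1}{2}\int_1^Y \frac{A(t)}{t^{3/2}}\,dt \ll \frac{1}{\log^{\alpha} Y} + \int_2^Y \frac{dt}{t\log^{\alpha} t}.$$
The hypothesis $\alpha < 1$ is precisely what makes the integral $\asymp \log^{1-\alpha} Y$ (rather than $\log\log Y$, if $\alpha = 1$, or divergent behavior in the other direction), so $\Sigma(\sqrt{X}) \ll \log^{1-\alpha} X$. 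Substituting gives the first sum $\ll \sqrt{X}\log^{1-\alpha-\beta} X$, and the symmetric argument (using $\beta < 1$) yields the same bound for the second sum.

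The only subtlety to watch is the exponent bookkeeping: one must verify that the boundary term $A(Y)/\sqrt{Y}$ is indeed dominated by the integral (true since $1/\log^{\alpha} Y = o(\log^{1-\alpha} Y)$ as $Y \to \infty$), and that $\alpha, \beta$ being possibly negative causes no issue, as the integral estimate $\int_2^Y dt/(t\log^{\alpha} t) \asymp \log^{1-\alpha} Y$ holds for all real $\alpha < 1$. No obstacle is expected beyond this routine check.
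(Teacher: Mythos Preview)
Your argument is correct. The approach differs from the paper's in a useful way. The paper sums over the full range $2 \le a \le X/2$, keeps the factor $\log^{\beta}(X/a)$ explicit in the summand, and after partial summation arrives at the two-variable integral $\int_2^{X/2} \frac{dt}{t\,\log^{\beta}(X/t)\,\log^{\alpha} t}$, which it recognizes as (a piece of) a Beta integral $B(1-\beta,1-\alpha)\log^{1-\alpha-\beta}X$. Your symmetric splitting at $\sqrt{X}$ lets you replace $\log(X/a)$ by $\log X$ at the cost of a constant, so the partial summation collapses to the one-variable integral $\int_2^{\sqrt{X}}\frac{dt}{t\log^{\alpha} t}$, avoiding the Beta function entirely. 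Your route is shorter and more elementary; the paper's route has the mild advantage of being a single pass (no symmetry argument) and of tracking the constant through $B(1-\beta,1-\alpha)$, which is irrelevant here but would matter if one wanted an explicit constant.
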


We also have
\begin{lem}\label{prodlb}
Let $\alpha, \beta < 1$ be real numbers.
Let $A, B \subset \N$ such that $|A(X)| \gg \frac{\sqrt{X}}{\log^{\alpha}X}
$ and $B(X) \gg \frac{\sqrt{X}}{\log^{\beta}X}$ for all sufficiently large $X$. Then we have
\begin{equation*}
  \sum_{\substack{a \in A, b\in B\\
 ab \leq X}} 1 \gg \sqrt{X}\log^{ 1- \alpha-\beta}X.
 \end{equation*}
for all sufficiently large $X.$
\end{lem}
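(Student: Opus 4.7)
The plan is to fix $X$ large, write $S(X) := \sum_{a \in A, b \in B, \, ab \le X} 1$, and bound $S(X)$ from below by restricting to $a \le \sqrt{X}$, which keeps $X/a \ge \sqrt{X}$ in the asymptotic range where the hypothesis on $B$ can be applied. Explicitly,
\begin{equation*}
S(X) \;\ge\; \sum_{\substack{a \in A\\ a \le \sqrt{X}}} B(X/a).
\end{equation*}
For $a \le \sqrt{X}$ we have $\sqrt{X} \le X/a \le X$, hence $\log(X/a) \asymp \log X$ (whether $\beta$ is positive or not), so the hypothesis gives $B(X/a) \gg \sqrt{X/a}/\log^{\beta} X$. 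Therefore
\begin{equation*}
S(X) \;\gg\; \frac{\sqrt{X}}{\log^{\beta} X} \sum_{\substack{a \in A \\ a \le \sqrt{X}}} a^{-1/2}.
\end{equation*}

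The next step is to estimate the inner sum by Abel summation. Writing $Y = \sqrt{X}$, one has
\begin{equation*}
\sum_{\substack{a \in A \\ a \le Y}} a^{-1/2} \;=\; \frac{A(Y)}{\sqrt{Y}} + \frac{1}{2}\int_{1}^{Y} \frac{A(t)}{t^{3/2}} \, dt.
\end{equation*}
Using $A(t) \gg \sqrt{t}/\log^{\alpha} t$ for $t$ sufficiently large, and the assumption $\alpha < 1$ (so that $\int^{Y} \frac{dt}{t \log^{\alpha} t} \asymp \log^{1-\alpha} Y$ grows), the integral contributes $\gg \log^{1-\alpha} Y \asymp \log^{1-\alpha} X$. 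This dominates the boundary term $A(Y)/\sqrt{Y} \gg \log^{-\alpha} X$, so
\begin{equation*}
\sum_{\substack{a \in A \\ a \le \sqrt{X}}} a^{-1/2} \;\gg\; \log^{1-\alpha} X.
\end{equation*}

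Combining the two estimates yields $S(X) \gg \sqrt{X} \log^{1-\alpha-\beta} X$, as required. The reasoning is symmetric to the proof of Lemma \ref{produb} and the only genuinely non-routine point is the restriction $a \le \sqrt{X}$, used to guarantee $\log(X/a) \asymp \log X$ so that the lower bound on $B$ can be inserted with a logarithmic factor still of order $\log X$; this is also what forces the use of the hypothesis $\alpha < 1$ (to make the partial-summation integral divergent and recover the full power $\log^{1-\alpha} X$). There is no real obstacle beyond these bookkeeping issues.
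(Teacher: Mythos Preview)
Your proof is correct and follows essentially the same approach as the paper: restrict to $a\le\sqrt{X}$, apply the lower bound on $B(X/a)$, and finish by partial summation using $A(t)\gg\sqrt{t}/\log^{\alpha}t$. Your version is in fact slightly more streamlined, since by observing $\log(X/a)\asymp\log X$ at the outset you pull the factor $\log^{\beta}X$ out before the Abel summation, whereas the paper keeps $\log^{\beta}(X/t)$ inside the integrand and only later reduces to a Beta-type integral via the substitution $u=\log t/\log X$.
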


\begin{proof}[Proof of Lemma~\ref{produb}]
For any real number $X \geq 2,$
 we have $A(X) \leq c\frac{\sqrt{X}}{\log^{\alpha}(X)}$ and
$B(X) \leq c \frac{\sqrt{X}}{\log^{\beta}(X)}$ for some $c> 0.$

\vspace{0.2cm}
\noindent
Therefore
\begin{eqnarray}\label{ub1}
\sum_{\substack{a \in A, b\in B\\
 ab \leq X}} 1 &\leq & A(2)B(X) + B(2)A(X) + \sum_{\substack{2\leq a \leq \frac{X}{2}\\ 
 a \in A}} B\Big(\frac{X}{a}\Big)\\
 & \leq & c \sqrt{X}\sum_{2\leq a \leq \frac{X}{2}}  \frac{I_A(a)}{\sqrt{a} \log^{\beta}(\frac{X}{a})} + O\left(\frac{\sqrt{X}}{\log^{\min(\alpha, \beta)}X } \right).\nonumber
 \end{eqnarray}
 
 \noindent
  By partial summation we obtain
\begin{eqnarray}\label{ub2} 
%\sum_{\substack{2\leq n \lea \frac{x}{2}\\ 
 %n \in A}} B(\frac{X}{n})
 %& \leq & c \sqrt{X}
 \sum_{2\leq a \leq \frac{X}{2}}  \frac{I_A(a)}{\sqrt{a} \log^{\beta}(\frac{X}{a})}
 &= &
\frac{1}{\log^{\alpha}(X) \log^{\beta}2} +
\int_2^{\frac{X}{2}} A(t)\frac{\log(\frac{X}{t}) -\beta }{2t^{3/2}\log^{\beta +1}(\frac{X}{t})}dt\\
& \ll & 
\int_2^{\frac{X}{2}} \frac{dt}{t\log^{\beta}(\frac{X}{t})\log^{\alpha}(t)} +O(\log^{-\alpha}(X)).\nonumber
\end{eqnarray}
Since $1-\alpha > 0$, $1-\beta > 0,$ we obtain
\begin{equation}
\int_2^{\frac{X}{2}} \frac{dt}{t\log^{\beta}(\frac{X}{t})\log^{\alpha}(t)} \ll  B(1-\beta, 1-\alpha) \log^{1- (\alpha + \beta)}X, \label{ub3}
%\int_0^1 \frac{du}{(1-u)^{\beta} u^{\alpha}} \ll \log^{1- (\alpha + \beta)}X ,
\end{equation}
where $B$ is the beta function.  We obtain~\eqref{ubAB} from~\eqref{ub1}, \eqref{ub2} and \eqref{ub3}.
\end{proof}
\noindent
A similar argument gives Lemma~\ref{prodlb}. We provide the details below. 

\begin{proof}[Proof of Lemma~\ref{prodlb}.]
There exists $c > 0$ and a real number $X_0$  such that $A(X) \geq c\frac{\sqrt{X}}{\log^{\alpha}(X)}$ and
$B(X) \geq c \frac{\sqrt{X}}{\log^{\beta}(X)}$ for all $X \geq X_0.$ Therefore for all sufficiently large $X$, we have

\begin{equation}\label{prodlbeq}
\sum_{\substack{a \in A, b\in B\\
 ab \leq X}} 1 \geq \sum_{\substack{X_0\leq n \leq \sqrt{X}\\
 n\in A}} B\Big(\frac{X}{n}\Big) \geq c \sqrt{X} \int_{X_0}^{\sqrt{X}} A(t)\frac{\log(\frac{X}{t}) -\beta }{2t^{3/2}\log^{\beta +1}(\frac{X}{t})}dt + O(\sqrt{X}\log^{-\alpha}(X)).
\end{equation}
When $t \in [X_0, \sqrt{X}]$, we have $\log(\frac{X}{t}) -\beta \geq \frac{\log(\frac{X}{t}) }{2}$ for all sufficiently large $X.$ Using this 
 we obtain that
 \begin{eqnarray*}
 \int_{X_0}^{\sqrt{X}} A(t)\frac{\log(\frac{X}{t}) -\beta }{2t^{3/2}\log^{\beta +1}(\frac{X}{t})}dt &\gg& \int_{X_0}^{\sqrt{X}}\frac{1}{t\log^{\beta }(\frac{X}{t})\log^{\alpha}t}dt \\
 &=& \log^{1-(\alpha+\beta)}X\int_{\frac{\log X_0}{\log X}}^{\frac12}\frac{du}{(1-u)^{\beta}u^{\alpha}}\\
 &\geq & \log^{1-(\alpha+\beta)}X \int_{\frac14}^{\frac12} \frac{du}{(1-u)^{\beta}u^{\alpha}},
 \end{eqnarray*}
 provided $X \gg X_0^4.$
 Using this and~\eqref{prodlbeq} the claim follows. 
\end{proof}

\begin{cor} \label{A^nub} Let $\alpha < 1$ be a real number and $n \geq 2$ be an integer. Let
$A \subset \N$ such that $A(X) \ll \frac{\sqrt{X}}{\log^{\alpha}X}
$ for all sufficiently large $X$. Then for all sufficiently large $X$, we have
$$ A^k(X)  \ll \sqrt{X}\log^{k-1- k\alpha}X.$$ 
\end{cor}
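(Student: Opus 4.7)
The plan is a simple induction on $k$, using Lemma~\ref{produb} at each step to multiply one more copy of $A$ into the product set. The base case $k=1$ is just the hypothesis $A(X) \ll \sqrt{X}/\log^{\alpha} X$.

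For the inductive step, assume that $A^{k-1}(X) \ll \sqrt{X}\log^{(k-2)-(k-1)\alpha}X$, which I can rewrite as
\[
A^{k-1}(X) \ll \frac{\sqrt{X}}{\log^{\beta}X},\qquad \beta=(k-1)\alpha-(k-2),
\]
so that it fits the hypothesis of Lemma~\ref{produb}. Since $\alpha<1$, we have $\beta=(k-1)(\alpha-1)+1<1$, so the lemma is applicable to the pair $(A, A^{k-1})$ with exponents $\alpha$ and $\beta$. Noting that $A^k\subseteq A\cdot A^{k-1}$, the lemma yields
\[
A^k(X)\le (A\cdot A^{k-1})(X)\ll \sqrt{X}\log^{1-\alpha-\beta}X = \sqrt{X}\log^{k-1-k\alpha}X,
\]
which closes the induction.

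There is no real obstacle here; the only thing worth checking is that Lemma~\ref{produb} remains available at every stage of the induction, i.e. that the effective exponent $\beta$ on $A^{k-1}$ never reaches the forbidden value $1$. This is precisely the role of the assumption $\alpha<1$: it gives $\beta<1$ for every $k\ge 2$, so the iteration goes through without any side condition on $k$. The fact that the lemma permits $\alpha$ and $\beta$ to be negative (indeed $\beta<0$ already when $k=2$) is what allows us to iterate freely, without having to separate cases according to the sign of the accumulated logarithmic exponent.
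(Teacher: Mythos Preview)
Your proof is correct and is exactly the approach the paper takes: a straightforward induction on $k$ feeding $A$ and $A^{k-1}$ into Lemma~\ref{produb}. One tiny slip in your closing remark: at $k=2$ one has $\beta=(k-1)\alpha-(k-2)=\alpha$, which need not be negative; $\beta$ only turns negative for larger $k$, but this is irrelevant since Lemma~\ref{produb} merely requires $\beta<1$.
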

\begin{proof}
Using induction, this is an immediate corollary of Lemma~\ref{produb}. 
\end{proof}

Theorem~\ref{limsuplb} now follows from Corollary~\ref{A^nub} and the inequality
$AB(X) \leq A(X)B(X),$ which is easy to verify..

\vspace{0.5cm}
For proving Theorem~\ref{ub}, we need the following result.
\begin{lem}\label{p1}
Let $\alpha > 0$ be a real number and $P$ be the set of primes. Then there exists a set $P_1 \subset P$ such that for any sufficiently large integer $X$, we have 
$$
\frac{\sqrt{X}}{\log^{\alpha}X} \ll P_1(X) \ll\frac{\sqrt{X}}{\log^{\alpha}X}.
$$ 
In fact, we also have $P_1 \cap (0.5X, X] \gg \frac{\sqrt{X}}{\log^{\alpha}X}.$
\end{lem}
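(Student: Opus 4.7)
The plan is the following. Fix any $\theta$ with $1<\theta<\sqrt{2}$ (for definiteness, $\theta=5/4$) and partition $(1,\infty)$ into the geometric blocks $I_j:=(\theta^j,\theta^{j+1}]$. By the prime number theorem, or simply by Chebyshev's estimate (applied to $\theta^{j+1}$ minus $\theta^j$, which is a positive fraction of $\theta^j$), there exists $j_0$ such that for every $j\ge j_0$,
\[
|P\cap I_j|\gg \frac{\theta^j}{j}.
\]
For each such $j$, arbitrarily select a subset $S_j\subset P\cap I_j$ of cardinality
\[
N_j:=\left\lceil \frac{\theta^{j/2}}{j^\alpha}\right\rceil,
\]
which is possible because $N_j=o(\theta^j/j)$. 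Set $P_1:=\bigcup_{j\ge j_0}S_j$.

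The key arithmetic observation is that $(N_j)$ behaves like a geometric sequence of common ratio $\theta^{1/2}>1$ (up to the slowly varying factor $(j/(j+1))^\alpha\to 1$); hence the partial sums are dominated by their last term:
\[
\sum_{j_0\le j\le K}N_j\;\asymp\; N_K\;\asymp\;\frac{\theta^{K/2}}{K^\alpha}.
\]
Given $X$ large, pick $K$ with $\theta^K\le X<\theta^{K+1}$, so that $\sqrt X/\log^\alpha X\asymp N_K$. Since $P_1(X)$ is sandwiched between $\sum_{j<K}N_j$ and $\sum_{j\le K}N_j$, both the upper and the lower estimates $P_1(X)\asymp \sqrt X/\log^\alpha X$ follow at once.

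For the sharper assertion about the interval $(X/2,X]$, the condition $\theta^2<2$ is crucial. The multiplicative window $(X/2,X/\theta]$ has ratio $2/\theta>\theta$, so for $X$ large it must contain a power $\theta^j$; for this $j$ we have $I_j\subseteq (X/2,X]$ and $\theta^j\asymp X$, hence
\[
|P_1\cap(X/2,X]|\;\ge\; |S_j|\;=\; N_j\;\asymp\;\frac{\sqrt X}{\log^\alpha X}.
\]
The only substantive input is the classical lower bound on $|P\cap I_j|$; everything else is bookkeeping of a geometric-type sum. The main (minor) subtlety to keep straight is the twin role of the constraint $\theta<\sqrt 2$: it guarantees both that the ratio $\theta^{1/2}$ makes the partial sums $\sum_{j\le K}N_j$ comparable to $N_K$ and that the dyadic interval $(X/2,X]$ always contains a full block $I_j$ of the partition.
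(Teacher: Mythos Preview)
Your proof is correct and follows essentially the same construction as the paper: partition $(1,\infty)$ into geometric blocks and select a prescribed number of primes from each. The paper uses dyadic blocks (ratio $2$) and simply asserts the conclusion, whereas your choice of $\theta<\sqrt{2}$ is a nice refinement that makes the ``in fact'' claim about $(X/2,X]$ transparent, since such an interval then always swallows a full block $I_j$. One small expository slip: the comparability $\sum_{j\le K}N_j\asymp N_K$ only requires $\theta^{1/2}>1$, i.e.\ $\theta>1$; the constraint $\theta<\sqrt{2}$ is needed solely for the $(X/2,X]$ statement.
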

\begin{proof}
For any sufficiently large natural number $n$, we have $|P \cap (n,2n]| \geq \frac{n}{2\log n}.$ We choose any $P_1\subset P$ which satisfies that  for all sufficiently large natural numbers $l$,  $|P_1 \cap (2^l, 2^{l+1}]|  = \left[\frac{2^{l/2}}{2\log^{\alpha} 2^l}\right].$ Then $P_1$ is a set as required.
\end{proof}

\begin{cor}\label{P^nlb} Let $\alpha < 1$ be a real number. Let $P_1$ be a subset of primes with $P_1(X) \geq c\frac{\sqrt{X}}{\log^{\alpha}X}$ for any sufficiently large real number $X$, where $c> 0$ is a constant. Then for any $k \geq 2$ we have
$$P_1^k(X) \geq c_1 \sqrt{X}\log^{k-1 -k\alpha}X$$
for any sufficiently large $X$ with $c_1 > 0$ being a constant depending only on $c$ and $k.$

\end{cor}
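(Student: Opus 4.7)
The plan is to proceed by induction on $k$, using Lemma~\ref{prodlb} as the analytic engine and exploiting the fact that $P_1$ consists of primes, which caps multiplicities in product representations.

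For the base case $k=2$, I would apply Lemma~\ref{prodlb} with $A=B=P_1$ and exponent $\alpha<1$ to get
$$
\sum_{\substack{p,q\in P_1\\ pq\leq X}} 1 \;\gg\; \sqrt{X}\log^{1-2\alpha}X.
$$
By unique factorization, each $n\in P_1^2$ has at most $2$ ordered representations as a product of two primes, so dividing by $2$ yields $P_1^2(X)\gg \sqrt{X}\log^{1-2\alpha}X$, matching the claim for $k=2$.

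For the inductive step, assume $P_1^k(X)\gg \sqrt{X}\log^{k-1-k\alpha}X$, i.e.\ $P_1^k(X)\gg \sqrt{X}/\log^{\alpha'}X$ with $\alpha':=1-k+k\alpha$. The hypothesis $\alpha'<1$ of Lemma~\ref{prodlb} reduces to $k(\alpha-1)<0$ and holds since $\alpha<1$. Applying the lemma with $A=P_1^k$ (exponent $\alpha'$) and $B=P_1$ (exponent $\alpha$) gives
$$
\sum_{\substack{a\in P_1^k,\,b\in P_1\\ ab\leq X}} 1 \;\gg\; \sqrt{X}\log^{1-\alpha'-\alpha}X \;=\; \sqrt{X}\log^{k-(k+1)\alpha}X.
$$
Each $n\in P_1^{k+1}$ is counted in this sum at most $k+1$ times, since the factor $b\in P_1$ must be one of the at most $k+1$ prime divisors of $n$. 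Dividing by $k+1$ produces the required bound on $P_1^{k+1}(X)$, with exponent $(k+1)-1-(k+1)\alpha$, completing the induction.

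The only delicate point is the multiplicity bound that converts counts of ordered tuples into counts of distinct products; this is ultimately unique factorization, and is exactly the reason for working with subsets of primes. Beyond this, the induction simply has to track the log-exponent, which telescopes correctly because $1-\alpha'-\alpha=k-(k+1)\alpha$ at each stage, and the sign of $\alpha'$ plays no role since Lemma~\ref{prodlb} allows arbitrary real exponents less than $1$.
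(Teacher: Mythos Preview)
Your proof is correct and follows essentially the same route as the paper: induction on $k$, applying Lemma~\ref{prodlb} at each step with $A=P_1^{k}$ and $B=P_1$, and using unique factorization to bound the multiplicity $r(n)$ by $k+1$. The only cosmetic difference is that the paper takes $k=1$ as the (trivial) base case and inducts from $l-1$ to $l$, whereas you start the induction at $k=2$; the arithmetic of the log-exponents and the multiplicity bound are identical.
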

\begin{proof}
The claim is trivial for $k=1$. Suppose it is true for $k = l-1$ with $l\geq 2.$ Let $A = P_1^{l-1}$ and $B = P_1$. For any natural number if $r(n)$ denotes the number of solutions $(a,b)$ of $n = ab$ with $a \in A$ and $b \in B$, then $r(n) \leq l$. Hence we have 

$$\sum_{\substack{a \in A, b\in B\\
ab \leq X}} 1 = \sum_{\substack{ n \in AB\\
n \leq X}} r(n) \leq l P_1^l(X).$$
Using the above inequality and applying Lemma~\ref{prodlb}, the claim follows.
\end{proof}

We need the following result due to Lorentz.

\begin{thm}\cite[page 13, Theorem 6]{HR} \label{Lorentz} Let $A \subset \N$  with at least $2$ elements. Then there exists an additive complement $B\subset \N$ of $A$, namely such that $\mathbb{N}\smallsetminus(A+B)$ is finite,  with
$$
B(X) \ll \sum_{\substack{ n =1\\
A(n) > 0}}^X \frac{\log A(n)}{A(n)}.
$$ 
\end{thm}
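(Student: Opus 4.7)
The plan is to construct $B=\bigcup_{j\ge 0}B_j$ in dyadic stages determined by the doubling sequence of $A$. Set $n_{-1}:=0$, choose $n_0$ so that $A(n_0)\ge 2$, and define $n_{j+1}:=\min\{n>n_j: A(n)\ge 2A(n_j)\}$ for $j\ge 0$ (if $A$ is finite the sequence terminates and one handles the tail by a single direct block; assume henceforth that $A$ is infinite for the sketch). Let $A_j:=A\cap(n_{j-1},n_j]$, so that $|A_j|\ge A(n_j)/2$ by construction, and aim to build $B_j\subseteq[1,n_{j+1}-n_{j-1}]$ with $A_j+B_j\supseteq (n_j,n_{j+1}]$. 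Then $A+B\supseteq\bigcup_j(n_j,n_{j+1}]=(n_0,\infty)$ and $\mathbb{N}\smallsetminus(A+B)$ is finite.

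Each $B_j$ will be produced by a two-stage probabilistic argument. The key observation is that for a target $n\in(n_j,n_{j+1}]$ and $a\in A_j$, the integer $b=n-a$ lies in $[n-n_j,n-n_{j-1}]\subseteq[1,n_{j+1}-n_{j-1}]$, and there are $|A_j|$ such useful values of $b$. In Stage 1, include each $b\in[1,n_{j+1}-n_{j-1}]$ independently in $B_j^{(1)}$ with probability $p_j:=2\log A(n_j)/A(n_j)$. The failure probability for a given $n$ is
\[
(1-p_j)^{|A_j|}\le \exp\!\bigl(-p_jA(n_j)/2\bigr)=\frac{1}{A(n_j)},
\]
so $\mathbb{E}|B_j^{(1)}|=p_j(n_{j+1}-n_{j-1})$ and the expected number of uncovered $n\in(n_j,n_{j+1}]$ is at most $(n_{j+1}-n_j)/A(n_j)$. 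In Stage 2, add one element per uncovered $n$. By Markov's inequality, there is a realization of $B_j:=B_j^{(1)}\cup B_j^{(2)}$ satisfying $|B_j|\ll (n_{j+1}-n_{j-1})\log A(n_j)/A(n_j)$.

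Setting $B:=\bigcup_j B_j$ and summing, one obtains
\[
B(X)\ll \sum_{j:\,n_j\le X}(n_{j+1}-n_{j-1})\frac{\log A(n_j)}{A(n_j)}.
\]
By the doubling property, $\log A(n)/A(n)\asymp \log A(n_j)/A(n_j)$ uniformly for $n\in(n_j,n_{j+1}]$, so splitting $n_{j+1}-n_{j-1}=(n_{j+1}-n_j)+(n_j-n_{j-1})$ and telescoping (shifting indices by one in the second term, using $\log A(n_{j-1})/A(n_{j-1})\asymp \log A(n_j)/A(n_j)$), the right-hand side is $\ll\sum_{n\le X,\,A(n)>0}\log A(n)/A(n)$, as required.

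The main obstacle I anticipate is the bookkeeping in the last step: when $n_{j+1}-n_{j-1}$ is much larger than the length of the block $(n_j,n_{j+1}]$ it targets, matching $|B_j|$ against $\sum_{n\in(n_j,n_{j+1}]}\log A(n)/A(n)$ requires distributing the shift term $(n_j-n_{j-1})f_j$ onto the previous block $(n_{j-1},n_j]$, where the doubling guarantees $f_j\asymp f_{j-1}$. Additional edge cases --- values $n$ with $A(n)\in\{0,1\}$ where the summand vanishes and the sampling probability is ill-defined, and the boundary block where $n_{j+1}$ exceeds $X$ --- contribute at most an absolute constant to the counting function, easily absorbed into the implicit constant in $\ll$.
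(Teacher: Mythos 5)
Your overall strategy (a doubling sequence $n_j$ defined by $A(n_{j+1})\ge 2A(n_j)$, a probabilistic partial complement for each block followed by a deterministic patch, and a telescoping sum) is the classical approach behind the Halberstam--Roth statement (the paper itself only cites \cite{HR} and gives no proof). So the blueprint is the right one.

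However, the final counting step has a genuine gap, and it is not the kind that can be ``absorbed into the implicit constant.'' You set $B=\bigcup_j B_j$ with each $B_j\subseteq[1,n_{j+1}-n_{j-1}]$, choose a realization with $|B_j|\ll (n_{j+1}-n_{j-1})\log A(n_j)/A(n_j)$, and then assert
$B(X)\ll \sum_{j:\,n_j\le X}(n_{j+1}-n_{j-1})\log A(n_j)/A(n_j).$
This bound implicitly assumes that only the blocks with $n_j\le X$ meet $[1,X]$. That is false: \emph{every} $B_j$, including those with $n_j>X$, lives in an interval starting at $1$ (for a target $n$ near $n_j+1$ and $a\in A_j$ near $n_j$, the needed $b=n-a$ is near $1$), so $B_j\cap[1,X]$ is typically nonempty for all $j$. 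Markov's inequality applied to each $|B_j|$ separately gives no information about the local count $|B_j\cap[1,X]|$, and the crude fallback $|B_j\cap[1,X]|\le|B_j|$ is useless here because $\sum_j|B_j|\asymp\sum_n \log A(n)/A(n)$ always diverges for infinite $A$ (as $\log A(n)/A(n)\ge \log n/n$). A separate but related flaw: even the single boundary term $j=J$ with $n_J\le X<n_{J+1}$ contributes $(n_{J+1}-n_{J-1})\log A(n_J)/A(n_J)$, which can exceed $\sum_{n\le X}\log A(n)/A(n)$ by a factor as large as $n_{J+1}/X$, not by a constant. Repairing this requires either (i) a simultaneous local Chernoff estimate: pick one realization for which $|B_j^{(1)}\cap[1,Y]|\ll p_jY+O(\cdot)$ holds for all $j$ and all dyadic $Y$ at once, then sum over $j$ using $\sum_{j\ge J}p_j\ll\log A(X)/A(X)$ and the inequality $X\log A(X)/A(X)\ll\sum_{n\le X}\log A(n)/A(n)$; or (ii) a modified construction in which the random $b$'s assigned to cover $(n_j,n_{j+1}]$ are confined to a window bounded away from $1$, with the small handful of targets near $n_j$ that force tiny $b$'s handled separately from $A\cap[1,n_{j-1}]$. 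Neither repair is automatic, so the claim that the edge cases ``contribute at most an absolute constant'' is where the proof as written breaks.
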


We now give a proof of Theorem~\ref{ub}.

\begin{proof}[Proof of Theorem~\ref{ub}] Let $\alpha = \alpha(k)$ be as in Theorem~\ref{ub}.
For this $\alpha$, let $P_1$ be as in Lemma~\ref{p1}. Then using Corollary~\ref{P^nlb}, we have $P_1^k(X) \gg
\sqrt{X}\log^{k-1 -k\alpha}X.$ Then using Theorem~\ref{Lorentz}, there exists an additive complement $B$ of $P_1^k$ with $B(X) \ll \frac{\sqrt{X}}{ \log^{k-2-k\alpha}X}.$ We obtain the result by taking $A = P_1 \cup B$ and noticing that our choice of $\alpha$ satisfies $\alpha = k-2 -k\alpha.$
\end{proof}

\section{\bf Asymptotic behaviour for sets $A$ such that $A^2+A=\mathbb{N}_0$}
\label{asymp_behav}

In this section we give an account on the deviation for the counting function (beforehand normalized) of sets $A$ such that 
$A^2+A=\mathbb{N}_0$.

Let ${ A }\subset \mathbb{N}_0$ and define
$$
\alpha_A=\inf\left\{t\ge 0\,:\, \liminf_{X\to\infty}\frac{A(X)}{X^t}<\infty\right\},\quad
\beta_A=\inf\left\{t\ge 0\,:\, \limsup_{X\to\infty}\frac{A(X)}{X^t}<\infty\right\}.
$$ 
\begin{prop}
Let $ A $ such that $ A ^2+ A =\mathbb{N}_0$. Then
\begin{enumerate}[label={\rm (\alph*)}]
\item $\alpha_{ A }\ge 1/3$, 
\item $\beta_{ A }\ge 1/2$,
\item $\alpha_{ A }+\beta_{ A }\ge1$.
\end{enumerate}
\end{prop}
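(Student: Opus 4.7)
The plan rests on the single basic inequality
\[|(A^2+A)\cap[0,X]| \le |A^2\cap[0,X]| \cdot A(X),\]
which, combined with the hypothesis $A^2+A=\mathbb{N}_0$ (forcing the left-hand side to equal $X+1$), converts any upper bound on $|A^2\cap[0,X]|$ into a lower bound on $A(X)$. Parts (a)--(c) correspond to three levels of sharpness for the product-set bound. For (a), the trivial $|A^2\cap[0,X]|\le A(X)^2$ (each factor of a product $\le X$ is itself $\le X$) gives $A(X)^3\ge X+1$, so $A(X)\ge (X+1)^{1/3}$ unconditionally; for any $t<1/3$ this forces $\liminf_{X\to\infty}A(X)/X^t=\infty$, so $t$ lies outside the set defining $\alpha_A$, and $\alpha_A\ge 1/3$ follows.

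For (b) I would first establish the sharpening: whenever $A(X)\ll X^t$ with $0\le t<1$, partial summation yields $|A^2\cap[0,X]|\ll X^t\log X$. This is essentially Lemma~\ref{produb} adapted to a power-type hypothesis: counting pairs $(a,b)\in A\times A$ with $ab\le X$ by the smaller factor gives
\[|A^2\cap[0,X]|\ll\sum_{a\in A,\, a\le\sqrt X} A(X/a) \le C X^t\sum_{a\in A,\, a\le\sqrt X}a^{-t}\ll X^t\log X,\]
the last bound being a straightforward partial summation using $A(u)\ll u^t$ and $t<1$. If $\beta_A<1/2$ were true, some $t<1/2$ would satisfy $A(X)\ll X^t$ globally, and the basic inequality would give $|(A^2+A)\cap[0,X]|\ll X^{2t}\log X=o(X)$, contradicting $A^2+A=\mathbb{N}_0$.

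For (c) I would run the same argument but apply the two density exponents at different scales. Supposing $\alpha_A+\beta_A<1$, I fix $\delta>0$ with $\alpha_A+\beta_A+3\delta<1$. The global bound $A(X)\ll X^{\beta_A+\delta}$ (valid for all large $X$ by the definition of $\beta_A$) yields $|A^2\cap[0,X]|\ll X^{\beta_A+\delta}\log X$ everywhere, while along some sequence $X_n\to\infty$ we have $A(X_n)\ll X_n^{\alpha_A+\delta}$ by the definition of $\alpha_A$. Multiplying the two estimates at $X=X_n$ gives
\[|(A^2+A)\cap[0,X_n]| \ll X_n^{\alpha_A+\beta_A+2\delta}\log X_n = o(X_n),\]
again contradicting $A^2+A=\mathbb{N}_0$.

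The single nontrivial ingredient is the log-saving bound $|A^2\cap[0,X]|\ll X^t\log X$, which I expect to be the main technical point; however it is a standard partial-summation estimate closely modelled on the proof of Lemma~\ref{produb}, so no real obstacle is anticipated. Everything else is a formal manipulation of the definitions of $\alpha_A$ and $\beta_A$.
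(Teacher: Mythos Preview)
Your proposal is correct and follows essentially the same approach as the paper: part (a) via the trivial bound $A^2(X)\le A(X)^2$, and parts (b) and (c) via the partial-summation estimate $A^2(X)\ll X^{\beta}\log X$ under a global hypothesis $A(X)\ll X^{\beta}$, combined with the basic inequality $X\le A^2(X)\cdot A(X)$. The only cosmetic differences are that you phrase (b) and (c) as contradictions and restrict the sum over $a$ to $a\le\sqrt X$, whereas the paper argues directly and sums over all $a\le X$; neither affects the substance.
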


\begin{proof}
(a) We must have for any positive real number
$$
X\le \sum_{\substack{a,b,c\in A\\ ab+c\le X}}1\le A^2(X)A(X)\le A(X)^3
$$
hence $\alpha_{ A }\ge1/3$.\\[1em]
(b) Let $\beta>\beta_{ A }$. Then $A(X)\le X^{\beta}$ for any $X$ large enough.
It follows that
\begin{align*}
A^2(X)& \leq \sum_{\substack{a,b\in { A }\\ ab\le x}}1\le \sum_{\substack{a\in { A }\\a\le X}}A\left(\frac Xa\right)\ll X^{\beta}\sum_{\substack{a\in { A }\\a\le X}}a^{-\beta}
= X^{\beta}\left(\frac{A(X)}{X^{\beta}}+\beta\int_1^X\frac{A(t)}{t^{\beta+1}}dt\right)\\ 
&\ll X^{\beta}\left(1+\beta\int_1^X t^{-1}dt\right)\ll X^{\beta}\log X.
\end{align*}
Thus $X\le A(X)A^2(X)\le X^{2\beta}\log X$, yielding $\beta \geq 1/2$, whence $\beta_{ A }\ge1/2$.\\[1em]
(c) Let $\alpha>\alpha_{ A }$ and $\beta>\beta_{ A }$ and $X$ large enough such that
$A(X)\ll X^{\alpha}$. We also have $A(Y)\ll Y^{\beta}$ for any $Y$. Thus 
$A^2(X)\ll X^{\beta}\log X$ and $$X^{\alpha}\gg A(X)\gg \frac{X}{A^2(X)}\gg \frac{X^{1-\beta}}{\log X}.$$
It follows that $\alpha+\beta\ge1$ for any $\alpha>\alpha_{ A }$ and any $\beta>\beta_{ A }$. Hence $\alpha_{ A }+\beta_{ A }\ge1$.
\end{proof}

We now prove the reverse statement:
\begin{prop}\label{conv-alphabeta}
For any pair of real numbers $(\alpha,\beta)$ satisfying 
$$0\le 1-\beta\le \alpha\le \beta < 1 \text{ and }\alpha\ge1/3,$$ there exists
${ A }\subset\mathbb{N}_0$ such that ${ A }^2+{ A }=\mathbb{N}_0$ 
and
\begin{eqnarray}
A(X) & \ll & X^{\beta} \log^{2/3} X, \nonumber\\
A(X) & \gg & X^{\alpha} \log^{1/3} X,  \quad\text{and}\nonumber\\
A(X) & \ll & X^{\alpha} \log^{1/3} X \quad\text{for infinitely many natural numbers } X. \nonumber
\end{eqnarray}
In particular, we have $(\alpha,\beta)=(\alpha_{ A },\beta_{ A })$.
\end{prop}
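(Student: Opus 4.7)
The plan is to construct $A$ as $A=B\cup E$, where $B$ is a baseline set already satisfying $B^2+B=\mathbb{N}_0$ with $B(X)\asymp X^\alpha\log^{1/3}X$, and $E$ is an extra set that adds occasional ``bursts'' of elements so that $A(X)$ sometimes reaches the upper envelope $X^\beta\log^{2/3}X$. Since $A\supseteq B$, the covering $A^2+A=\mathbb{N}_0$ is then automatic, and the work reduces to controlling the counting function.

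For the baseline $B$, I would mimic the proof of Theorem~\ref{ub}. Pick a set of primes $P_1\subseteq P$ with $P_1(X)\asymp X^\alpha/\log^{c_1}X$ for an exponent $c_1$ to be fixed. An analogue of Corollary~\ref{P^nlb} (with exponent $\alpha$ in place of $1/2$) gives $P_1^2(X)\gg X^{2\alpha}/\log^{c_2}X$ with $c_2$ determined by $c_1$. The hypothesis $\alpha\ge 1/3$ translates into $1-2\alpha\le\alpha$, so the additive complement $C$ furnished by Theorem~\ref{Lorentz} satisfies $C(X)\ll X^{1-2\alpha}\log^{c_3}X\ll X^\alpha\log^{1/3}X$ once $c_1$ is calibrated properly. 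Setting $B=P_1\cup C$, and absorbing the finite Lorentz exceptional set by adding finitely many further elements, gives $B^2+B=\mathbb{N}_0$ and $B(X)\asymp X^\alpha\log^{1/3}X$.

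For the boost I would pick a rapidly increasing sequence $Y_1<Y_2<\cdots$ satisfying $Y_{i+1}\ge Y_i^{(\beta/\alpha)+1}$ and place on each interval $(Y_i/2,Y_i]$ a set $E_i$ of roughly $c_0\bigl(Y_i^\beta\log^{2/3}Y_i-Y_{i-1}^\beta\log^{2/3}Y_{i-1}\bigr)$ fresh integers, setting $E=\bigcup_i E_i$ and $A=B\cup E$. The three required counting-function estimates are then routine. The lower bound $A(X)\gg X^\alpha\log^{1/3}X$ comes for free from $A\supseteq B$. The uniform upper bound $A(X)\ll X^\beta\log^{2/3}X$ holds because $E(X)$ is at most $c_0 X^\beta\log^{2/3}X$ in the ``active'' interval $(Y_{i+1}/2,Y_{i+1}]$ and is at the constant level $c_0 Y_i^\beta\log^{2/3}Y_i\ll X^\beta\log^{2/3}X$ on the ``dead'' interval $(Y_i,Y_{i+1}/2]$, while $B(X)\asymp X^\alpha\log^{1/3}X\ll X^\beta\log^{2/3}X$ since $\alpha\le\beta$. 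Finally, the required dip $A(X)\ll X^\alpha\log^{1/3}X$ along the subsequence $X=Y_{i+1}/2$ follows because the growth condition $Y_{i+1}\ge Y_i^{(\beta/\alpha)+1}$ forces $Y_i^\beta\log^{2/3}Y_i=o\bigl((Y_{i+1}/2)^\alpha\log^{1/3}Y_{i+1}\bigr)$, so $A(Y_{i+1}/2)\asymp B(Y_{i+1}/2)\asymp (Y_{i+1}/2)^\alpha\log^{1/3}(Y_{i+1}/2)$.

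The main obstacle is the bookkeeping of the logarithmic exponents in Stage~1, in particular formulating the correct analogue of Corollary~\ref{P^nlb} for primes of density $X^\alpha/\log^{c_1}X$ (with general $\alpha\in[1/3,1)$ rather than the $1/2$ used in the paper) and a version of Theorem~\ref{Lorentz} sharp enough in the logarithmic factors so that $P_1(X)$ and $C(X)$ both contribute $\asymp X^\alpha\log^{1/3}X$ without one strictly dominating the other. The sum--product part is not the hard part, because the covering is inherited from $B$ and the boost set $E$ is pure surplus.
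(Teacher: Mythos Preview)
There is a genuine gap in your Stage~1, and it is not a matter of logarithmic bookkeeping. Your claim that ``an analogue of Corollary~\ref{P^nlb} (with exponent $\alpha$ in place of $1/2$) gives $P_1^2(X)\gg X^{2\alpha}/\log^{c_2}X$'' is false: look again at that corollary, which for $P_1(X)\gg\sqrt{X}/\log^{\gamma}X$ yields $P_1^2(X)\gg\sqrt{X}\log^{1-2\gamma}X$. Multiplication does \emph{not} double the polynomial exponent; only the log exponent changes. The same partial-summation argument with $P_1(X)\asymp X^{\alpha}\log^{c}X$ gives $P_1^2(X)\asymp X^{\alpha}\log^{2c+1}X$. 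Since any additive complement $C$ of $P_1^2$ must satisfy $C(X)\cdot P_1^2(X)\gg X$, this forces $C(X)\gg X^{1-\alpha}$ up to logarithms. For $1/3\le\alpha<1/2$ (which the hypotheses allow) one has $1-\alpha>\alpha$, so your baseline $B=P_1\cup C$ necessarily has $B(X)\gg X^{1-\alpha}$, and the required dip $A(X)\ll X^{\alpha}\log^{1/3}X$ along a subsequence is impossible. The condition $\alpha\ge1/3$ that you invoke as $1-2\alpha\le\alpha$ is precisely what one would need if $P_1^2(X)\gg X^{2\alpha}$ held, but it does not.

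The paper's construction gets around this by \emph{not} separating the roles of $\alpha$ and $\beta$ as you do. Rather than a uniform prime set of exponent $\alpha$ with cosmetic bursts added later, the paper builds a single prime set $A_1$ that is dense of exponent $\beta$ on blocks $(x_i,y_i]$ with $y_i=x_{i+1}^{\alpha/\beta}$, and empty on $(y_i,x_{i+1}]$. Then $A_1(x_{i+1})\asymp y_i^{\beta}\asymp x_{i+1}^{\alpha}$, so the dips are already built into $A_1$; meanwhile $A_1^2(x_{i+1})$ picks up $\gg(A_1(y_i))^2\asymp x_{i+1}^{2\alpha}$ distinct prime products below $y_i^2$, and more generally $A_1^2(t)\gg t^{\min(2\alpha,\beta)}\log^{1/3}t$ for all $t$. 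The hypotheses $\alpha\ge1/3$ and $\alpha\ge1-\beta$ are exactly the inequalities $1-2\alpha\le\alpha$ and $1-\beta\le\alpha$ needed to make the Lorentz complement of $A_1^2$ satisfy $B(t)\ll t^{\alpha}\log^{2/3}t$. The oscillation between exponents $\beta$ and $0$ is what makes the \emph{square} of the set large enough relative to the set itself; a uniformly $X^{\alpha}$-sized prime set cannot achieve this.
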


\begin{proof}
Let $x_1 \geq 64$ be a sufficiently large natural number so that for any real number $x \geq x_1,$ we have
\begin{equation}\label{eqq1}
\pi(x, 2x) \gg \frac{x}{\log x},
\end{equation}
where $\pi(x, 2x)$ denotes the number of primes in the interval $(x, 2x].$ Let $\{x_1, x_2, \ldots \}$ be the sequence of natural numbers defined by $x_{i+1} = x_{i}^7$, $i \geq 1.$ For any $i \geq 1,$ let $y_i = x_{i+1}^{\frac{\alpha}{\beta}}$ be a real number. Then $y_i \geq x_{i+1}^{1/3} \geq 4 x_i^2.$ Let $P_1$ be a subset of primes with following properties:
\begin{eqnarray}
P_1 \cap [x_i, x_{i+1}] &\subset & (x_i, y_i],\  \forall i,\nonumber\\
\left|P_1 \cap [2^j x_i, 2^{j+1} x_i]\right|& \asymp & {(2^{j+1}x_i)^{\beta} \log^{1/3}x_i},\; 1\leq j \leq \log_2\frac{y_i}{x_i} -1,\  \forall i.\nonumber
\end{eqnarray}
By \eqref{eqq1}, since $\beta < 1$, there exist such $P_1.$ Let $A_1 = P_1 \cup \{0,1\}.$ It is easy to verify that we have
\begin{eqnarray}
A_1(t) &\gg & t^{\beta}\log^{1/3} t, \; 2x_i \leq t \leq y_i, \; \forall i,\nonumber\\
A_1(t) & \ll & t^{\beta}\log^{1/3} t,\ \forall t,\nonumber\\
A_1(t) & \gg & t^{\alpha} \log^{1/3} t,\ \forall t,\nonumber\\
A_1(x_i) & \ll & x_i^{\alpha} \log^{1/3} x_i, \ \forall i.\nonumber
\end{eqnarray} 
Here all the implied constants in the above inequalities are absolute. Let $t \geq x_2$ be a real number. Then  $t \in (x_i, x_{i+1}]$ for some $i \geq 2.$ Using the above inequalities, we have
\begin{eqnarray}
A_1^2(t) & \gg & t^{2\beta} \log^{2/3} t, \text{ if } 4x_i^2 \leq t \leq y_i^2,\nonumber\\
A_1^2(t) & \gg & t^{2\alpha} \log^{2/3} t, \text{ if } t \in (x_i, 2x_i] \cup (y_i^2, x_{i+1}],\nonumber\\
A_1^2(t) &\gg & t^{\beta} \log^{1/3} t, \text{ if } 2x_i \leq t \leq 4x_i^2.\nonumber
\end{eqnarray}
In particular, we have  $A_1^2(t) \gg t^{\min(2\alpha, \beta)}\log^{1/3}t$. Using Theorem~\ref{Lorentz}, there exists $B \subset \mathbb{N}_0$ such that $A_1^2 + B = \mathbb{N}_0$ and
$B(t) \ll t^{\alpha}\log^{2/3}t$. Moreover we have for any $i \geq 1$,
\begin{equation*}
B(x_{i+1}) \ll x_i^2 + \sum_{4x_i^2 \leq t \leq x_{i+1}} \frac{\log A_1^2(t)}{A_1^2(t)} \ll x_{i+1}^{1-2\alpha}\log^{1/3}x_{i+1} \ll x_{i+1}^{\alpha} \log^{1/3}x_{i+1}.
\end{equation*}
Let $A = B \cup A_1.$ Then we have $A^2 + A = \mathbb{N}_0$ and $A(x_i) \ll x_i^{\alpha} \log^{1/3} x_i$. We have 
\begin{equation*}
\max(A_1(t), B(t)) \leq A(t) \leq A_1(t)+ B(t).
\end{equation*}
Using this, the result follows. In case $\alpha < \beta$, in fact we also have $A(t) \ll t^{\beta} \log^{1/3}t$ for every~$t$.
\end{proof}

In \cite[Theorem 1.8]{HH} the authors proved that for any $n$ there exists a finite set $S\subset\mathbb{N}_0$ such that
$|S|\ll (n\log n)^{1/3}$ and $\{0,1,\dots,n\}\subset S^2+S$. We can extend the idea of \cite{HH} to show the following: 
% (similar to the proof  of Theorem \ref{locthin} below) 

\begin{cor}\label{liminf}
There is an infinite  set $A_0\subset\mathbb{N}_0$ such that 
$$
A_0^2+A_0=\mathbb{N}_0 \quad\text{ and }\quad \liminf_{X\to\infty}\frac{A_0(X)}{(X\log X)^{1/3}}<\infty.
$$
\end{cor}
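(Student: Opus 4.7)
The corollary is essentially the extremal case of Proposition~\ref{conv-alphabeta}, so the plan is to invoke that proposition with an appropriate choice of the parameters $(\alpha,\beta)$ and read off the desired conclusion directly.

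First, I will choose the pair $(\alpha,\beta)=(1/3,2/3)$, which sits at the ``lower corner'' of the admissible region described in Proposition~\ref{conv-alphabeta}. I will check that this pair satisfies the hypotheses $0\le 1-\beta\le\alpha\le\beta<1$ and $\alpha\ge 1/3$: indeed $1-\beta=1/3=\alpha$, $\alpha=1/3\le 2/3=\beta$, and $\beta<1$, so all the inequalities hold (two with equality). Applying Proposition~\ref{conv-alphabeta} to this pair produces a set $A\subset\mathbb{N}_0$ with $A^2+A=\mathbb{N}_0$ satisfying
\[
A(X)\gg (X\log X)^{1/3}\ \text{for every large }X,\qquad A(X)\ll (X\log X)^{1/3}\ \text{for infinitely many }X,
\]
since $X^{\alpha}\log^{1/3}X = (X\log X)^{1/3}$ when $\alpha=1/3$. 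Setting $A_0:=A$, the second bound gives at once $\liminf_{X\to\infty} A_0(X)/(X\log X)^{1/3}<\infty$.

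It remains only to note that $A_0$ is infinite, which is clear: the construction inside Proposition~\ref{conv-alphabeta} uses a subset $P_1$ of the primes whose density is prescribed on the rapidly growing sampling sequence $x_{i+1}=x_i^7$, and $P_1$ is infinite by the prime-counting lower bound~\eqref{eqq1}. The appeal to \cite{HH} advertised in the statement is then merely an \emph{a posteriori} observation: the order $(X\log X)^{1/3}$ produced above along the sampling sequence $x_i$ coincides with the size $\ll(n\log n)^{1/3}$ of the finite set constructed in \cite[Theorem~1.8]{HH} to cover $\{0,1,\dots,n\}$, so the infinite set $A_0$ achieves, on a sparse subsequence, exactly the counting function provided by the finite HH construction.

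There is no real obstacle, as the statement is a direct consequence of an already proved proposition; the only care needed is to confirm that the \emph{closed} endpoints $\alpha=1/3$ and $\beta=2/3$ lie in the admissible region of Proposition~\ref{conv-alphabeta} (they do) and to rewrite the bound $X^{\alpha}\log^{1/3}X$ in the more telling form $(X\log X)^{1/3}$.
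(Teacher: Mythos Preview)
Your proof is correct and follows exactly the same approach as the paper: apply Proposition~\ref{conv-alphabeta} with $(\alpha,\beta)=(1/3,2/3)$ and read off the bound $A(X)\ll X^{1/3}\log^{1/3}X=(X\log X)^{1/3}$ along the infinitely many $X$ guaranteed there. The paper's proof is in fact just this one line, together with the side remark that one also gets $\limsup_{X\to\infty}A_0(X)/(X\log X)^{2/3}<\infty$ from the first conclusion of the proposition.
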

\begin{proof}
Using Proposition~\ref{conv-alphabeta}, with $\alpha = 1/3$ and $\beta = 2/3$, the result follows.
Note that we also have  $$\limsup_{X\to\infty}\frac{A_0(X)}{(X\log X)^{2/3}}<\infty.$$
\end{proof}
\section{\bf Some basic results in probability}\label{basic-probab}
Let $Y = \{0,1\}^{\N}.$ Any set $A \subset \N$ is in one-one correspondence with its indicator function which is an element of $Y.$ One can show an existence of a set $A \subset \N$ satisfying certain properties by assigning a suitable probability measure on $Y$ (that is collection of all subsets of $\N$) such that the probability of collection of those subsets of $\N$ which satisfy the required properties is strictly positive. In Sections~\ref{thin_almost}, 
%\ref{prob_S2+S} 
and \ref{prob_S2+S2}, we shall use this method to show an existence of a set with the properties we are interested in.

\vspace{.5cm}
Now $\{0,1\}$ is a discrete topological space and $Y$ is a product topological space. Let $\mathcal{B} \subset \mathcal{P}(Y)$ be the Borel  $\sigma$-algebra on $Y.$ Given any sequence of real numbers $\{x_a \}_{a\in \N}$ with
$0\leq x_a \leq 1,$ let $p_a:\mathcal{P}(\{0,1\}) \to [0,1]$ be a sequence of probability measure such that $p_a(\{1\}) =x_a.$ Then there exists a unique probability measure $\mathbb{P} : \mathcal{B} \to [0,1]$ such that $\mathbb{P} = \prod_{a\in \N} p_a.$ One says that we are selecting a \emph{random subset} $A$ of $\N$ by selecting every element $a\in N$ with probability $x_a$ and the elements are selected independently.
We shall write $\mathbb{E}^{\mathbb{P}}(Z)$ (or simply $\mathbb{E}(Z)$) and $\mathbb{V}^{\mathbb{P}}(Z)$ (or simply $\mathbb{V}(Z)$)  respectively  for the expectation and the variance of a random variable $Z$ on this probability space.

\vspace{.5cm}
For any $a\in \N$, let $\xi_a : X \to \{0,1\}$ be the projection to the $a$-th coordinate and we define
\begin{equation*}
A(n) = \sum_{a \leq n} \xi_a, \; \text{ and } \lambda_n = \sum_{a \leq n} x_a.
\end{equation*}
Then the following result is an easy corollary  of \cite[Corollary 1.10]{TV}.

\begin{lemma}\label{chernoff} For any $0 < \varepsilon < 1/2,$ we have
$$
\mathbb{P}\left(\{A \subset \N: (1-\varepsilon)\lambda_n \leq A(n) \leq (1+\varepsilon)\lambda_n\}\right) \geq 1- 2 \exp^{-\frac{\varepsilon^2 \lambda_n}{4}}.
$$
\end{lemma}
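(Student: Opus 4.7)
The plan is to recognize Lemma~\ref{chernoff} as a standard multiplicative Chernoff bound for a sum of independent Bernoulli variables and simply read off the specific constant from the cited [TV, Corollary 1.10]. The whole argument has essentially three steps.

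Step one is to identify the random variable. Under the product measure $\mathbb{P} = \prod_{a \in \N} p_a$, the coordinate projections $\xi_a$ are mutually independent Bernoulli variables with $\mathbb{P}(\xi_a = 1) = x_a \in [0,1]$. Therefore $A(n) = \sum_{a \leq n} \xi_a$ is a sum of independent $\{0,1\}$-valued random variables with mean $\E[A(n)] = \lambda_n$, which is precisely the hypothesis of the Tao--Vu result. This is the only structural observation needed; everything else is the classical exponential-moment computation.

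Step two is that standard derivation: for $t > 0$ one bounds $\E[e^{t\xi_a}] = 1 + x_a(e^t - 1) \leq \exp(x_a(e^t - 1))$, so by independence $\E[e^{tA(n)}] \leq \exp(\lambda_n(e^t - 1))$; Markov's inequality applied to $e^{tA(n)}$ and $e^{-tA(n)}$ followed by optimization over $t$ produces
$$\mathbb{P}(A(n) \geq (1+\varepsilon)\lambda_n) \leq e^{-\lambda_n f_+(\varepsilon)}, \qquad \mathbb{P}(A(n) \leq (1-\varepsilon)\lambda_n) \leq e^{-\lambda_n f_-(\varepsilon)},$$
with $f_+(\varepsilon) = (1+\varepsilon)\log(1+\varepsilon) - \varepsilon$ and $f_-(\varepsilon) = (1-\varepsilon)\log(1-\varepsilon) + \varepsilon$.

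Step three is the calibration of the constant $1/4$ in the statement. A short Taylor expansion gives $f_\pm(\varepsilon) = \varepsilon^2/2 + O(\varepsilon^3)$, and a routine monotonicity check on $(0, 1/2]$ confirms the uniform lower bound $\min(f_+(\varepsilon), f_-(\varepsilon)) \geq \varepsilon^2/4$ throughout $0 < \varepsilon < 1/2$; a union bound then delivers the claimed $2\exp(-\varepsilon^2\lambda_n/4)$. There is no real obstacle here — the argument is wholly contained in the cited [TV, Corollary 1.10], and the factor $4$ is simply the convenient (non-sharp) constant one obtains when demanding a single uniform bound valid on the whole interval $(0, 1/2)$.
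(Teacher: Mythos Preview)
Your proposal is correct and matches the paper's approach: the paper simply states that the lemma ``is an easy corollary of \cite[Corollary 1.10]{TV}'' without further argument, and your write-up supplies exactly the standard exponential-moment derivation and constant calibration that underlie that citation. There is nothing to add.
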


If there exists a finite set $C\subset \N$ such that for $a \notin C$, we have $y_a =0$, then  $\mathbb{P}$ induces a probability measure on $\{0,1\}^C$ and
for any $0 < \varepsilon < 1/2,$ we have
\begin{equation}\label{finite-chernoff}
\mathbb{P}\left(\{A \subset C: (1-\varepsilon)\lambda \leq |A| \leq (1+\varepsilon)\lambda\}\right) \geq 1- 2\exp\left(-\frac{\varepsilon^2 \lambda}{4}\right),
\end{equation}
 where $\lambda = \sum_{a\in C} y_a.$
 
\begin{lemma}[Borel-Cantelli Lemma]
 Let $E_n \in \mathcal{B}$ with $\sum_{n}\mathbb{P}(E_n) < \infty.$ Then we have
\label{Borel-Cantelli}
\begin{equation*}
\mathbb{P}\left(\{A\subset \N: A \notin E_n \text{ for all sufficiently large } n\}\right) = 1
\end{equation*}

\end{lemma}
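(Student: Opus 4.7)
The plan is to prove the contrapositive event has probability zero. Let $F$ denote the set of $A \subset \N$ such that $A \in E_n$ for infinitely many $n$; the complement of $F$ is precisely the event in the lemma, so it suffices to show $\mathbb{P}(F) = 0$. I would first rewrite $F$ in the standard $\limsup$ form
\begin{equation*}
F = \bigcap_{N=1}^{\infty} \bigcup_{n \geq N} E_n,
\end{equation*}
which makes sense because each $E_n \in \mathcal{B}$ and $\mathcal{B}$ is a $\sigma$-algebra, so $F \in \mathcal{B}$ and $\mathbb{P}(F)$ is well defined.

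Next I would fix $N$ and estimate by monotonicity and countable subadditivity of the measure $\mathbb{P}$:
\begin{equation*}
\mathbb{P}(F) \leq \mathbb{P}\Bigl(\bigcup_{n\geq N} E_n\Bigr) \leq \sum_{n\geq N}\mathbb{P}(E_n).
\end{equation*}
Since by assumption $\sum_{n\geq 1}\mathbb{P}(E_n) < \infty$, the tail sum $\sum_{n\geq N}\mathbb{P}(E_n)$ tends to $0$ as $N \to \infty$. Therefore $\mathbb{P}(F) = 0$, and consequently
\begin{equation*}
\mathbb{P}\bigl(\{A \subset \N : A \notin E_n \text{ for all sufficiently large } n\}\bigr) = 1 - \mathbb{P}(F) = 1,
\end{equation*}
which is the desired conclusion.

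There is really no substantial obstacle here: the only ingredients are the standard measure-theoretic properties of $\mathbb{P}$ (countable subadditivity and continuity from above/below) together with the convergence of the series $\sum_n \mathbb{P}(E_n)$. The proof is essentially a bookkeeping exercise, and the only small point to be careful about is verifying that $F$ lies in the $\sigma$-algebra $\mathcal{B}$, which follows immediately from its description as a countable intersection of countable unions of Borel sets.
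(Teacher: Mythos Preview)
Your proof is correct and is exactly the standard argument for the (first) Borel--Cantelli Lemma. The paper itself states this lemma as a classical fact without proof, so there is no alternative approach to compare against; your write-up fills in the routine details appropriately.
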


\begin{cor}\label{card}
Suppose $\lambda_n \ge \kappa \log^2 n$ then 
\begin{equation*}
\mathbb{P}\left(\{A\subset \N: \lim_{n\to\infty}{\lambda_n^{-1}}{A(n)}=1\}\right) = 1
\end{equation*}
\end{cor}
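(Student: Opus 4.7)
The plan is a standard Chernoff plus Borel--Cantelli argument, using the hypothesis $\lambda_n \geq \kappa \log^2 n$ to make the probability of deviation summable in $n$.

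First, fix $\varepsilon \in (0, 1/2)$ and let $E_n^{\varepsilon} := \{A \subset \mathbb{N} : A(n) < (1-\varepsilon)\lambda_n \text{ or } A(n) > (1+\varepsilon)\lambda_n\}$. By Lemma~\ref{chernoff} we have
\[
\mathbb{P}(E_n^{\varepsilon}) \leq 2\exp\!\left(-\frac{\varepsilon^2 \lambda_n}{4}\right) \leq 2\exp\!\left(-\frac{\varepsilon^2 \kappa}{4}\log^2 n\right) = 2\,n^{-\frac{\varepsilon^2 \kappa}{4}\log n}.
\]
Since $\log n \to \infty$, the exponent $\frac{\varepsilon^2\kappa}{4}\log n$ eventually exceeds $2$, so $\sum_n \mathbb{P}(E_n^{\varepsilon}) < \infty$. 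By the Borel--Cantelli lemma (Lemma~\ref{Borel-Cantelli}), the event
\[
F_{\varepsilon} := \{A \subset \mathbb{N} : A \notin E_n^{\varepsilon} \text{ for all sufficiently large } n\}
\]
has probability $1$, i.e. $\mathbb{P}(F_{\varepsilon}) = 1$.

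To pass from ``each $\varepsilon > 0$'' to ``the limit equals $1$'', I would take a countable exhaustion: set $\varepsilon_k = 1/k$ and observe that $F := \bigcap_{k \geq 1} F_{\varepsilon_k}$ still has probability $1$ as a countable intersection of probability-$1$ events. For any $A \in F$ and any $\varepsilon > 0$, pick $k$ with $1/k < \varepsilon$; then for all $n$ large enough, $|A(n)/\lambda_n - 1| \leq 1/k < \varepsilon$. Hence $\lim_{n \to \infty} A(n)/\lambda_n = 1$ on the full-measure set $F$, proving the corollary.

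There is no real obstacle here: the only point to check is that $\exp(-\varepsilon^2 \kappa \log^2 n /4)$ is summable, which works precisely because the hypothesis provides $\log^2 n$ rather than merely $\log n$ (a lower bound $\lambda_n \gg \log n$ would be just at the borderline of summability and would require a sharper constant-tracking argument).
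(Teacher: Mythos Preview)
Your proof is correct and follows the same Chernoff-plus-Borel--Cantelli strategy as the paper. The only difference is cosmetic: the paper takes a single $n$-dependent $\varepsilon = 8/\sqrt{\kappa\log n}$ (so that one application of Borel--Cantelli already yields $|A(n)/\lambda_n - 1| \ll (\log n)^{-1/2} \to 0$), whereas you fix $\varepsilon$, apply Borel--Cantelli, and then intersect over the countable family $\varepsilon_k = 1/k$.
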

\begin{proof}
We choose $\varepsilon=\frac{8}{\sqrt{\kappa \log n}}$ in Lemma \ref{chernoff}. This implies that the probability that
$\big|\frac{A(n)}{\lambda_n}-1\big|\gg \frac1{\log n}$ is $O(\frac1{n^2})$. We conclude by the Borel-Cantelli Lemma. 
\end{proof}

\bigskip
Let $R(n)$ be a sequence of random variables on $Y.$ In our applications, we shall need to show that for almost every set, $R(n) \neq 0$ for all sufficiently large $n.$ The following result is an immediate corollary of Lemma~\ref{Borel-Cantelli}.
\begin{lemma}\label{R(n)gen}
Let $R(n)$ be a sequence of random variables on $Y.$ If $\mathbb{P}(\{R(n) =0\}) \leq \frac{1}{n^{1+\eta}}$ for some fixed $\eta > 0$, then we have 
\begin{equation*}
\mathbb{P}(\{A \subset \N: R(n) \neq 0 \text{ for all sufficiently large } n\}) =1.
\end{equation*}
\end{lemma}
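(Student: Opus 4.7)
The plan is to apply Lemma~\ref{Borel-Cantelli} essentially verbatim. First I would set $E_n = \{A \subset \N : R(n) = 0\} \in \mathcal{B}$, which is a legitimate event since $R(n)$ is a random variable on $Y$.

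Next I would verify the summability hypothesis of the Borel--Cantelli lemma: by assumption $\mathbb{P}(E_n) \leq n^{-(1+\eta)}$, and since $\eta > 0$ the series $\sum_n n^{-(1+\eta)}$ converges, so $\sum_n \mathbb{P}(E_n) < \infty$.

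Then Lemma~\ref{Borel-Cantelli} applies and gives that with probability $1$, a random $A$ fails to lie in $E_n$ for all sufficiently large $n$. Unwinding the definition of $E_n$, this is precisely the statement that $R(n) \neq 0$ for all sufficiently large $n$ with probability one, which is the desired conclusion.

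There is no real obstacle here; the lemma is purely a convenient repackaging of Borel--Cantelli in the form in which it will be invoked in Sections~\ref{thin_almost} and \ref{prob_S2+S2}, where $R(n)$ will be the number of representations of $n$ in the relevant sum-product form. The only thing to be careful about when writing it out is making sure that $\{R(n) = 0\}$ is indeed measurable, which is automatic from $R(n)$ being a random variable.
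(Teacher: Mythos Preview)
Your proposal is correct and matches the paper exactly: the paper simply states that this lemma is an immediate corollary of Lemma~\ref{Borel-Cantelli}, and your argument is precisely that immediate deduction with $E_n=\{R(n)=0\}$.
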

 We assume that $R(n)$ depends only upon the first $n$ coordinates. Then $R(n)$ may be viewed as a random variable on $Y_n = \{0,1\}^n.$  Moreover $\mathbb{P}$ induces a probability measure $\mathbb{P}_n= \prod_{i=1}^n p_i$ on $Y_n$ and 
\begin{equation*}
\mathbb{P}(\{R(n)= 0\}) = \mathbb{P}_n(\{R(n) = 0\}).
\end{equation*}
In order to obtain an upper bound for the probability of those sets such that $R(n) = 0$, we shall use Janson's inequality. Before stating it, we need some assumptions on $R(n)$ and some notations. 

\vspace{.5cm}

For any $n$, we shall assume that there exist a finite index set $I$ and for every $i\in I$ a Boolean random variable $Z_i$ on $Y_n$ such that
\begin{equation*}
R(n) = \sum_{i\in I} Z_i. 
\end{equation*}
Let $\Gamma$ be a simple undirected graph with vertex set as the elements of $I$ without loop and if $(i,j)\notin \Gamma$, then we assume that $Z_i$ and $Z_j$ are independent. Let
\begin{equation*}
\mu_n = \E(R(n))\; \text{ and } \Delta_n = \sum_{(i,j) \in \Gamma} \E(Z_iZ_j).
\end{equation*}

\begin{lemma}[Janson's inequality]
\label{Janson}
We have
\begin{equation*}
\mathbb{P}_n(R(n) = 0) \leq \exp{(-\mu_n +\Delta_n)}.
\end{equation*}
\end{lemma}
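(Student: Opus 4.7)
The plan is to follow the classical proof of Janson's inequality: telescope the product $\mathbb{P}_n(\bigcap \overline{B_i})$, bound each conditional factor by a Bonferroni-type argument that leverages the dependency graph $\Gamma$, and then apply $1 - x \leq e^{-x}$ termwise.

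Writing $B_i = \{Z_i = 1\}$ and enumerating $I = \{1, \ldots, m\}$ arbitrarily, the starting point is the telescoping identity
$$\mathbb{P}_n(R(n) = 0) = \mathbb{P}_n\!\left(\bigcap_{i=1}^m \overline{B_i}\right) = \prod_{i=1}^m \mathbb{P}_n\!\left(\overline{B_i} \,\Big|\, \bigcap_{j<i}\overline{B_j}\right),$$
valid as long as all conditioning events have positive probability (otherwise the left-hand side already vanishes). For each $i$, I partition the predecessors $\{j < i\}$ into the dependent part $D_i = \{j<i : \{i,j\}\in\Gamma\}$ and the independent part $D_i^c = \{j<i\}\setminus D_i$, and set $E_i = \bigcap_{j \in D_i^c}\overline{B_j}$ and $F_i = \bigcap_{j \in D_i}\overline{B_j}$, so that the conditioning event in the $i$-th factor above is $E_i \cap F_i$.

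The heart of the proof is the key lower bound
$$\mathbb{P}_n(B_i \mid E_i \cap F_i) \geq \mathbb{P}_n(B_i) - \sum_{j \in D_i}\mathbb{P}_n(B_i \cap B_j),$$
which I would establish in three steps. First, bounding the denominator by $1$:
$$\mathbb{P}_n(B_i \mid E_i \cap F_i) = \frac{\mathbb{P}_n(B_i \cap F_i \mid E_i)}{\mathbb{P}_n(F_i \mid E_i)} \geq \mathbb{P}_n(B_i \cap F_i \mid E_i).$$
Second, from the inclusion $B_i \cap \overline{F_i} \subseteq \bigcup_{j \in D_i}(B_i \cap B_j)$ and a union bound,
$$\mathbb{P}_n(B_i \cap F_i \mid E_i) \geq \mathbb{P}_n(B_i \mid E_i) - \sum_{j \in D_i}\mathbb{P}_n(B_i \cap B_j \mid E_i).$$
Third, the mutual independence of $Z_i$ from the family $(Z_k)_{k \in D_i^c}$ gives $\mathbb{P}_n(B_i \mid E_i) = \mathbb{P}_n(B_i)$, while $\mathbb{P}_n(B_i \cap B_j \mid E_i) \leq \mathbb{P}_n(B_i \cap B_j)$ is supplied by the Harris-FKG correlation inequality, using that in the intended applications each $B_i$ is an increasing event in the underlying coordinates $\xi_a$ and $E_i$ is the intersection of decreasing events.

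Combining the key inequality with $1 - x \leq e^{-x}$ on each factor of the telescoping product yields
$$\mathbb{P}_n(R(n) = 0) \leq \exp\!\left(-\sum_{i=1}^m \mathbb{P}_n(B_i \mid E_i \cap F_i)\right) \leq \exp\!\left(-\mu_n + \sum_{i=1}^m \sum_{j \in D_i}\mathbb{P}_n(B_i \cap B_j)\right) \leq \exp(-\mu_n + \Delta_n),$$
since the double sum traverses each edge $\{i,j\}$ of $\Gamma$ exactly once (with $j<i$), and this is at most $\Delta_n$. The main obstacle is the FKG step inside the key inequality: $Z_j$ need not be independent of the full family $(Z_k)_{k \in D_i^c}$, so dropping the conditioning on $E_i$ in $\mathbb{P}_n(B_i \cap B_j \mid E_i)$ genuinely requires monotonicity of the $B_i$'s in the underlying product space (or a correspondingly strengthened independence hypothesis). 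Once that step is granted, the rest of the argument is just telescoping and the elementary inequality $1 - x \leq e^{-x}$.
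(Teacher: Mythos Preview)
The paper does not supply its own proof of this lemma; it is stated as a named, well-known result (Janson's inequality) and used as a black box. Your proposal reproduces the standard proof due to Boppana--Spencer/Janson, and the argument is correct: the telescoping, the split into $E_i$ and $F_i$, the Bonferroni lower bound, and the final product-to-exponential step are all as in the literature. You are also right to single out the FKG step $\mathbb{P}_n(B_i\cap B_j\mid E_i)\le \mathbb{P}_n(B_i\cap B_j)$ as the one place requiring more than the bare hypotheses recorded in the paper: this uses that each $B_i$ is an increasing event in the coordinates $\xi_a$ (which is indeed the case in the paper's applications, where $Z_i=\xi_{a}\xi_{b}\xi_{c}\xi_{d}$), and similarly the step $\mathbb{P}_n(B_i\mid E_i)=\mathbb{P}_n(B_i)$ uses that $Z_i$ is independent of the whole family $(Z_k)_{k\in D_i^c}$ and not merely of each $Z_k$ separately, which again holds in the intended product-space setting because the underlying index sets are disjoint. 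With those contextual assumptions in place your proof is complete; there is nothing in the paper to compare it against.
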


\bigskip
A function $f: Y_n \to \R$ is said to be monotone increasing function if $f(x_1,\ldots, x_n) \leq f(y_1,\ldots, y_n)$, whenever $x_i \leq y_i \; \forall i.$ In our applications $I_{R(n)\neq 0}$ will be a monotone increasing function. The following result shall be useful in obtaining an upper bound for $\mathbb{P}_n(\{R(n) =0\}).$

\begin{lemma}\label{transfert}
Let $f: Y_n \to \R$ be a monotone increasing function. Let $\mathbb{P}_n= \prod_{i=1}^n p_i$ and $\mathbb{P}'_n= \prod_{i=1}^n p'_i$ be two probability measure on $Y_n$ with $p_i(\{1\}) \geq p'_i(\{1\})\; \forall i.$ Then
\begin{equation*}
\mathbb{E}^{\mathbb{P}_n}f: = \sum_{y\in Y_n} f(x)\mathbb{P}_n(\{y\}) \geq \sum_{y\in Y_n}f(y)\mathbb{P}'_n(\{y\}): = \mathbb{E}^{\mathbb{P}'_n}f.
\end{equation*}

\end{lemma}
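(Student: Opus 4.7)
The plan is to prove Lemma~\ref{transfert} by a standard one-coordinate-at-a-time interpolation between the two product measures. For $0 \le k \le n$, I would define the hybrid product measure
\[
Q_k = p_1 \otimes \cdots \otimes p_k \otimes p'_{k+1} \otimes \cdots \otimes p'_n,
\]
so that $Q_0 = \mathbb{P}'_n$ and $Q_n = \mathbb{P}_n$. The telescoping identity
\[
\mathbb{E}^{\mathbb{P}_n} f - \mathbb{E}^{\mathbb{P}'_n} f = \sum_{k=1}^n \bigl(\mathbb{E}^{Q_k} f - \mathbb{E}^{Q_{k-1}} f\bigr)
\]
reduces the problem to showing that each individual summand is non-negative, i.e.\ to the case where $\mathbb{P}_n$ and $\mathbb{P}'_n$ differ in a single coordinate.

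Fixing such an index $k$, I would condition on the value of the $k$-th coordinate. Writing $y = (y_{-k}, y_k)$ with $y_{-k} \in \{0,1\}^{n-1}$, and setting $q_j = p_j$ for $j < k$, $q_j = p'_j$ for $j > k$, one obtains
\[
\mathbb{E}^{Q_k} f - \mathbb{E}^{Q_{k-1}} f = \bigl(p_k(\{1\}) - p'_k(\{1\})\bigr) \sum_{y_{-k} \in \{0,1\}^{n-1}} \biggl(\prod_{j \neq k} q_j(\{y_j\})\biggr)\bigl[f(y_{-k},1) - f(y_{-k},0)\bigr],
\]
since the only change from $Q_{k-1}$ to $Q_k$ is the weight assigned to the $k$-th coordinate. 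The prefactor is non-negative by hypothesis, and each bracket $f(y_{-k},1) - f(y_{-k},0)$ is non-negative by the monotonicity assumption on $f$. Hence every term in the telescoping sum is non-negative, and the inequality $\mathbb{E}^{\mathbb{P}_n} f \ge \mathbb{E}^{\mathbb{P}'_n} f$ follows.

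There is no real obstacle here: the argument is a direct consequence of the product structure of both measures combined with the monotonicity of $f$, and all the computations are finite and elementary. An equivalent and equally clean approach would be a coupling argument using independent uniform $[0,1]$ variables $U_i$ and setting $X_i = \mathbf{1}_{U_i \le p_i(\{1\})}$, $X'_i = \mathbf{1}_{U_i \le p'_i(\{1\})}$, which realizes both measures on the same probability space with $X_i \ge X'_i$ pointwise; but the interpolation proof has the advantage of staying entirely within the discrete framework already set up in Section~\ref{basic-probab}.
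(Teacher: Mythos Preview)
Your proof is correct and follows essentially the same approach as the paper: both reduce to the case where the two product measures differ in a single coordinate, handle that case by conditioning on that coordinate and using monotonicity of $f$, and then pass to the general case by interpolating through hybrid measures. The paper phrases the last step as an induction on the number of coordinates where $p_i \ne p'_i$, while you phrase it as a telescoping sum over the chain $Q_0,\dots,Q_n$; these are the same argument.
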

\begin{proof}
We first show the result when there exists an $i_0$ with $1 \leq i_0 \leq n$ such that
$p_{i} = p'_i$ for every $i \neq i_0.$ We may assume, without any loss of generality that $i_0 = 1.$ Then we have
\begin{multline}\label{expec-identity}
\E^{\mathbb{P}_n}f = \sum_{y \in \{0,1\}^{n-1}}f(y,0)\bigg(\prod_{i=2}^{n}p_i \bigg)(\{y\}) \\+ \sum_{y\in \{0,1\}^{n-1}}(f(1,y)-f(0,y))p_1(\{1\})\bigg(\prod_{i=2}^{n}p_i \bigg)(\{y\}).
\end{multline}
Since $f$ is monotone increasing, for any $y \in \{0,1\}^{n-1}$, we have $f(y,1)-f(y,0)\geq 0.$  Hence we have
\begin{equation}\label{mono}
\left(f(y,1)-f(y,0)\right)p_1(\{1\})\geq \left(f(y,1)-f(y,0)\right)p'_1(\{1\}).
\end{equation}
Using \eqref{expec-identity} and \eqref{mono}, we obtain the result when $p_{i} = p'_{i}$ for any $i \neq 1.$ Using the induction hypothesis, we may assume that the result holds when the number of $i$ such that $p_i \neq p'_i$ is at most $k \geq 1.$
If $k=n$, then we have nothing to prove. If $k < n$, we need to show that the result holds when  the number of $i$ such that $p_i \neq p'_i$ is equal to $k+1.$ Without any loss of generality, we may assume that $p_i = p'_i$ for every $i \geq k+2.$ Let $\mathbb{P}''_n = \prod{i=1}^np''_i$ be the measure on $Y_n$ with $p''_i =p'_i$ for $i \leq k$ and $p''_i = p_i$ for $i \geq k+1.$ Using the induction hypothesis, we have
\begin{equation*}
\E^{\mathbb{P}_n}f \geq \E^{\mathbb{P}''_n}f \geq \E^{\mathbb{P}'_n}f.
\end{equation*}
Hence the result follows.
\end{proof}

\section{\bf Locally extremely thin almost sum-product basis}
\label{thin_almost}

\begin{comment}
\begin{thm}
There is an infinite  set $A_0\subset\mathbb{N}_0$ such that 
$$
A_0^2+A_0=\mathbb{N}_0 \quad\text{ and }\quad \liminf_{X\to\infty}\frac{A_0(X)}{(X\log X)^{1/3}}<\infty.
$$
\end{thm}
\begin{proof}
Let $N_0>1$ be a large positive integer, $(N_k)$ be defined by $N_{k+1}=N_k^3$, $k\ge0$, and
$L_{k}=(\log N_{k})^{1/3}$.
We
denote
$$
A=\{0,1\dots,N_0-1\}\cup \bigcup_{k\ge0}\left([N_{2k},N_{2k+1}L_{2k+1}^4]\cap\{\text{prime numbers}\}\right).
$$
Clearly $A(N_{2k+2})\le N_{2k}+\pi(N_{2k+1}L_{2k+1}^4)\ll
N_{2k+1}L_{2k+1}\le N_{2k+2}^{1/3}L_{2k+2}$.
 \\
 On the other hand by Lorentz's Theorem \ref{Lorentz} there exists an additive complement $B$ of $A^2$ satisfying
 $$
B(N_{2k+2})\ll 2N_{2k}+\sum_{2N_{2k}<t\le N_{2k+1}L_{2k+1}^4}
+\sum_{N_{2k+1}L_{2k+1}^4<t\le N_{2k+1}^2L_{2k+1}^8}
+\sum_{N_{2k+1}^2L_{2k+1}^8 < t \le N_{2k+2}}\frac{\log A(t)}{A^2(t)}.
 $$
Since $A^2(t)\ge A(t)\gg \pi(t)$ in the range $2N_k<t\le  N_{2k+1}L_{2k+1}^4$, the first sum is 
$\ll \log^3{N_{2k+1}}$.\\
In the range $N_{2k+1}L_{2k+1}^4<t\le N_{2k+1}^2L_{2k+1}^8$, we have $A^2(t)\gg A(\sqrt{t})^2\gg \frac{t}{\log^2t}$, hence the second sum is $\ll \log^4 N_{2k+1}$.\\
Finally when $t>N_{2k+1}^2L_{2k+1}^8$, we get   
$$
A^2(t)\ge A^2(N_{2k+1}^2L_{2k+1}^8)\gg \pi(N_{2k+1}L_{2k+1}^4)^2\gg N_{2k+1}^2L_{2k+1}^2,
$$ 
thus the third sum is
$$
\ll N_{2k+2}\frac{L_{2k+1}^3}{N_{2k+1}^2L_{2k+1}^2}\ll N_{2k+2}^{1/3}L_{2k+2}.
$$
Hence denoting $A_0=A\cup B$ we get $A_0(N_{2k+2})\le A(N_{2k+2})+B(N_{2k+2})\ll \left(N_{2k+2}\log N_{2k+2}\right)^{1/3}$ for any $k$.
\end{proof}
\end{comment}
In Corollary~\ref{liminf}, it was shown that there exists $A \subset \mathbb{N}_0$ with $A^2 + A = \mathbb{N}_0$ and $A(X) \ll (X\log X)^{1/3}$ for infinitely many integers $X.$
To obtain a thinner set in the sense that $A(X)\ll X^{1/3}$ for infinitely many integers $X$  is out of reach. Nevertheless it happens that by relaxing 
the covering condition $A^2+A=\mathbb{N}_0$ into $\dinf{A^2+A}>1-\varepsilon$, we can obtain  
 such a set $A$ satisfying $A(X)\ll_{\varepsilon} X^{1/3}$ for infinitely many integers $X$ (cf. Theorem \ref{thm53}). 
 
 \medskip
We shall use the ideas from an \emph{additive complement lemma} for finite sets of integers due to Ruzsa (see \cite[Lemma 2.1]{Ru}). We state and prove the needed version.

\begin{lem}\label{neolem}
Let $0<\varepsilon<\frac12$ be sufficiently small, $n\in\N$ and $A\subset\N$ such that $n> 10^5 \varepsilon^{-9/2}$ and 
\begin{equation}\label{hypo}
\forall x\in\llbracket n^{1/3},\varepsilon n\rrbracket,\ \forall  m\in\llbracket 2x,2\varepsilon^{-1}x\rrbracket,\quad
\big|A\cap\llbracket m-2x,m-x\rrbracket\big|> {\varepsilon}x^{2/3}\log\Big(\frac nx\Big).
\end{equation}
Then there exists $B\subset \llbracket n^{1/3}, 2\varepsilon n\rrbracket$ such that $|B|\ll \varepsilon^{-2/3}n^{1/3}$ and 
$$
\forall t\in\llbracket 2n^{1/3},2n\rrbracket,\quad \big|\llbracket 2n^{1/3},t\rrbracket\smallsetminus(A+B)\big|\le \varepsilon t.
$$
\end{lem}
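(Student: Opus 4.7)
The approach is a probabilistic construction in the spirit of Ruzsa's additive complement lemma \cite{Ru}. Include each $b \in \llbracket n^{1/3}, 2\varepsilon n\rrbracket$ in $B$ independently with probability
$$
p(b) = \frac{K}{\varepsilon\, b^{2/3}\,\log(n/b)},
$$
where $K$ is a parameter to be chosen at the end. The exponent $2/3$ and the denominator $\log(n/b)$ are tailored to exactly cancel the lower bound $\varepsilon x^{2/3}\log(n/x)$ from the hypothesis once $b$ is grouped dyadically with the scale $x$.

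First I would estimate $\mathbb{E}|B|$ by direct integration; substituting $u = n/b$, the dominant contribution comes from $b \sim \varepsilon n$ and yields
$$
\mathbb{E}|B| \asymp \frac{K n^{1/3}}{\varepsilon^{2/3}\log(1/\varepsilon)}.
$$
Chernoff's inequality (Lemma \ref{chernoff}) then gives $|B| \le 2\mathbb{E}|B|$ with probability at least $3/4$. Next, for a fixed $m \in \llbracket 2n^{1/3}, 2n\rrbracket$, the admissible dyadic scales are $x \in \llbracket \max(n^{1/3},\varepsilon m/2),\ \min(\varepsilon n, m/2)\rrbracket$, which provides $\asymp \log(1/\varepsilon)$ scales in the bulk of the target range. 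For each such $x$, the hypothesis supplies more than $\varepsilon x^{2/3}\log(n/x)$ candidates $a \in A$, whose translates $m-a$ populate $\llbracket x, 2x\rrbracket$ with selection probability $\gtrsim K/(\varepsilon x^{2/3}\log(n/x))$; the expected scale-wise coverage is thus $\gtrsim K$. Since different dyadic scales of $x$ correspond to disjoint dyadic ranges of $b = m - a$, the scale-wise covering events are independent, whence
$$
\mathbb{P}(m \notin A+B) \le \exp(-cK\log(1/\varepsilon)) = \varepsilon^{cK}.
$$

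To convert this pointwise bound into the uniform one, I would dyadically decompose the target as $I_j = \llbracket 2^j, 2^{j+1}\rrbracket$ and set $V_j = |I_j \setminus (A+B)|$. The pointwise bound yields $\mathbb{E} V_j \ll 2^j\varepsilon^{cK}$, and Markov's inequality gives $\mathbb{P}(V_j > \varepsilon 2^{j-1}/4) \ll \varepsilon^{cK-1}$. A union bound over the $O(\log n)$ admissible $j$, together with the size control on $|B|$, shows that with positive probability $|B| \ll \varepsilon^{-2/3} n^{1/3}$ and every $V_j \le \varepsilon 2^{j-1}/4$ hold simultaneously. Telescoping the dyadic contributions delivers $|\llbracket 2n^{1/3}, t\rrbracket \setminus (A+B)| \le \varepsilon t$ for every $t$, and extracting a deterministic $B$ completes the proof.

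The principal obstacle is the delicate tuning of $K$: it must be large enough that $(\log n)\varepsilon^{cK-1}$ is small (so that the union bound over dyadic intervals survives), yet small enough that $\mathbb{E}|B|$ stays within $O(\varepsilon^{-2/3}n^{1/3})$. The quantitative hypothesis $n > 10^5 \varepsilon^{-9/2}$ enters precisely here, ensuring that the dyadic sums defining $\mathbb{E}|B|$ are controlled at both endpoints $b \approx n^{1/3}$ and $b \approx \varepsilon n$, and that the $\log(n/x)$ factor in the hypothesis is nontrivial for every admissible scale.
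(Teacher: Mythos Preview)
Your construction has a genuine gap at the union-bound step. With $p(b) = K/(\varepsilon b^{2/3}\log(n/b))$, each dyadic scale $x$ contributes an exponent $\asymp K$ to the failure probability (the $\log(n/b)$ in $p(b)$ exactly cancels the $\log(n/x)$ from hypothesis~\eqref{hypo}), and since at most $\log_2(1/\varepsilon)$ scales are available, your pointwise bound $\mathbb{P}(m\notin A+B)\le \varepsilon^{cK}$ is \emph{uniform} in $m$. Markov then gives $\mathbb{P}(V_j>\varepsilon 2^{j-3})\ll \varepsilon^{cK-1}$ for every one of the $\asymp\log n$ dyadic intervals, and the union bound produces $(\log n)\,\varepsilon^{cK-1}$. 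But the lemma places only a \emph{lower} bound $n>10^5\varepsilon^{-9/2}$ on $n$ (and in the application to Theorem~\ref{thm53} one takes $n=N_{2k+1}\to\infty$ with $\varepsilon$ fixed), so no constant $K$ makes this small; taking $K\gtrsim (\log\log n)/\log(1/\varepsilon)$ repairs the union bound but inflates $\mathbb{E}|B|$ by a $\log\log n$ factor, violating $|B|\ll\varepsilon^{-2/3}n^{1/3}$. The condition $n>10^5\varepsilon^{-9/2}$ cannot help here: it bounds $n$ from below, not above.

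The paper's remedy is to \emph{keep} the $\log(n/x)$ factor rather than cancel it. Take $p(b)=10\,\varepsilon^{-1}b^{-2/3}$ (no logarithm), so $\lambda\asymp\varepsilon^{-2/3}n^{1/3}$ as required; partition the target $\varepsilon$-adically into $\llbracket 2\varepsilon^{j+1}n,2\varepsilon^{j}n\rrbracket$, $0\le j\le J_\varepsilon$; and for $m$ in the $j$-th block use the \emph{single} scale $x=\varepsilon^{j+1}n$. The hypothesis then supplies $>\varepsilon x^{2/3}(j+1)\log(1/\varepsilon)$ candidates, each selected with probability $\ge 5\varepsilon^{-1}x^{-2/3}$, so the failure exponent is $\ge 5(j+1)\log(1/\varepsilon)$ and $\mathbb{P}(m\notin A+B_j)\le\varepsilon^{5(j+1)}\le \varepsilon^2/(8(j+1)^2)$. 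The Markov failures $1/(4(j+1)^2)$ are now \emph{summable over $j$} independently of $n$, and the union bound goes through. The position-dependent decay in $j$, coming precisely from the $\log(n/x)$ in~\eqref{hypo} that your choice of $p(b)$ erased, is the missing ingredient.
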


\begin{proof}
Let $C = \llbracket n^{1/3}, 2\varepsilon n\rrbracket.$ We define a probability measure $\mathbb{P} = \prod_{a\in C}p_a$ on $Y = \{0,1\}^C$ by choosing

$$y_a : = p_a(\{1\}) = \frac{10{\varepsilon}^{-1}}{a^{2/3}}.$$
Our assumption implies that $y_a < 1$ and hence there exists such a probability measure.
 Then 
$$
\lambda:=\sum_{a \in C} y_a \asymp \varepsilon^{-2/3}n ^{1/3}
$$

Using~\eqref{finite-chernoff}, we have
$$
\mathbb{P} \left(B \subset C: |B| \ge 2\lambda\right)\leq 2 \exp
\left(-\frac{\lambda}{4}\right)
$$
which can be made smaller than $1/4$ by choosing $\epsilon$ small enough. Hence 
\begin{equation}\label{with}
\text{with probability at least $3/4$, $|B|\ll  \varepsilon^{-2/3} n^{1/3}$.}
\end{equation}
For any $B \subset C,$ we denote $B_j=B\cap\llbracket \varepsilon^{j+1}n,2\varepsilon^{j+1} n\rrbracket$
for any $0\le j \le   J_{\varepsilon}:=\lceil \frac{\log n^{2/3}}{\log \varepsilon^{-1}}\rceil-1$. Let 
$m\in\llbracket 2\varepsilon^{j+1}n,2\varepsilon^{j} n\rrbracket$. Then since 
\begin{equation}\label{nn22}
\forall a\in \llbracket
m-2\varepsilon^{j+1}n,m-\varepsilon^{j+1}n
\rrbracket,\ 
y_{m-a}\ge\frac{10\varepsilon^{-1}}{(2\varepsilon^{j+1}n)^{2/3}}>p:=\frac{5\varepsilon^{-1}}{(\varepsilon^{j+1}n)^{2/3}},
\end{equation}
we get
\begin{align*}
\mathbb{P}\left(m-a\not\in B_j,\ \forall a\in A\cap\llbracket
m-2\varepsilon^{j+1}n,m-\varepsilon^{j+1}n
\rrbracket\right) &\le (1-p)^{|A\cap\llbracket
m-2\varepsilon^{j+1}n,m-\varepsilon^{j+1}n
\rrbracket|}\\
&\le \exp\left(-p|A\cap\llbracket
m-2\varepsilon^{j+1}n,m-\varepsilon^{j+1}n \rrbracket|\right).
\end{align*}
By \eqref{hypo} and \eqref{nn22} this gives
\begin{align*}
\mathbb{P}\left(m-a\not\in B_j,\ \forall a\in A\cap\llbracket
m-2\varepsilon^{j+1}n,m-\varepsilon^{j+1}n
\rrbracket\right)
& \le \exp\left( -5(j+1){\log\varepsilon^{-1}}\right)\\
&\le \frac{\varepsilon^2}{8(j+1)^2}.
\end{align*}
We infer
$$
\mathbb{E}\left(
\llbracket 2\varepsilon^{j+1}n,2\varepsilon^{j} n\rrbracket\smallsetminus (A+B_j)
\right)\le \frac{\varepsilon^{j+2}n}{4(j+1)^2},
$$
hence by Markov's inequality
$$
\mathbb{P}\left(
\big|\llbracket 2\varepsilon^{j+1}n,2\varepsilon^{j} n\rrbracket\smallsetminus (A+B_j)\big|
>\varepsilon^{j+2}n
\right)\le \frac{1}{4(j+1)^2},
$$
and finally
$$
\mathbb{P}\left(\exists j, \ 0\le j \le J_{\varepsilon}\,:\,
\big|\llbracket 2\varepsilon^{j+1}n,2\varepsilon^{j} n\rrbracket\smallsetminus (A+B_j)\big|
>\varepsilon^{j+2}n
\right) < \frac{1}{2}.
$$
%Since the $B_j$'s are independent, we infer that there exists a set 
%$B=\bigcup_{j=0}^{J_{\varepsilon}} B_j$   such that $|B|\ll \varepsilon^{-4/3}n^{1/3}$ for which
With \eqref{with}, we deduce  that there exists a set $B$ such that $|B|\ll  \varepsilon^{-2/3} n^{1/3}$ and 
 $$
 \forall j, \ 0\le j \le J_{\varepsilon}\,:\,
\big|\llbracket 2\varepsilon^{j+1}n,2\varepsilon^{j} n\rrbracket\smallsetminus (A+B_j)\big|
\le \varepsilon^{j+2}n.
 $$
 Now let $t>2n^{1/3}$. Then there is a $j$ with $0\le j \le J_{\varepsilon}$ such that 
 $2\varepsilon^{j+1}n< t \le  2\varepsilon^j n$. Hence 
 $$
 \left|\llbracket 2n^{1/3},t\rrbracket\smallsetminus (A+B)\right|\le 
 \sum_{i=j}^{J_{\varepsilon}}\varepsilon^{i+2}n\le 2 \varepsilon^{j+2}n\le \varepsilon t.
 $$
This ends the proof of the lemma. 
\end{proof}

We deduce the main result of the section.

\begin{thm}\label{thm53}
For any $\varepsilon>0$, there exists an infinite sequence $A_0$ of integers such that 
$$
\dinf{A_0^2+A_0}\ge 1-{\varepsilon}\quad \text{and}\quad \liminf_{n\to\infty} X^{-1/3}A_0(X)\ll 
\varepsilon^{-5/6}
\sqrt{\log \varepsilon^{-1}}\ll \varepsilon^{-1}
$$
where the implied constant is absolute.
\end{thm}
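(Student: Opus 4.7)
The plan is to iterate Lemma~\ref{neolem} on a rapidly increasing sequence of scales $(n_k)_{k \ge 0}$, obtained by fixing $n_0 \ge 10^5 \varepsilon^{-9/2}$ and setting $n_{k+1} = n_k^3$. This choice makes the covering windows $[2n_k^{1/3}, 2n_k]$ supplied by successive applications of Lemma~\ref{neolem} tile $[2n_0^{1/3}, \infty)$ contiguously (since $2n_{k+1}^{1/3} = 2n_k$), and it makes the scales grow fast enough that contributions to the counting function from strictly earlier scales are negligible at $n_k$: $\sum_{j<k} n_j^{1/2} = O(n_{k-1}^{1/2}) = o(n_k^{1/3})$.

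At each scale $n = n_k$, I would construct a seed set $A_k \subset [2, n_k^{1/2}]$ such that (i) $|A_k| \ll \varepsilon^{-5/6}\sqrt{\log \varepsilon^{-1}}\, n_k^{1/3}$ and (ii) $A_k^2 = A_k \cdot A_k$ satisfies the density hypothesis~\eqref{hypo} at scale $n_k$. A natural candidate is a set of primes in $[n_k^{1/6}, n_k^{1/2}]$ selected dyadic-by-dyadic (via the Chernoff-type inequality in Lemma~\ref{chernoff}) with frequencies tuned so that the products $pq$, $p, q \in A_k$, furnish at least $\varepsilon x^{2/3} \log(n_k/x)$ elements in every window $\llbracket m-2x, m-x\rrbracket$ with $x \in [n_k^{1/3}, \varepsilon n_k]$ and $m \in [2x, 2\varepsilon^{-1}x]$. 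Lemma~\ref{neolem} applied to $A = A_k^2$ then provides $B_k \subset [n_k^{1/3}, 2\varepsilon n_k]$ with $|B_k| \ll \varepsilon^{-2/3}\, n_k^{1/3}$ and
$$
\big| \llbracket 2 n_k^{1/3}, t \rrbracket \setminus (A_k^2 + B_k) \big| \le \varepsilon t, \quad t \in [2n_k^{1/3}, 2n_k].
$$

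Setting $A_0 = \bigcup_{k \ge 0} (A_k \cup B_k)$, the inclusion $A_0^2 + A_0 \supseteq A_k^2 + B_k$ together with the tiling from the first paragraph gives $\dinf{A_0^2 + A_0} \ge 1 - \varepsilon$ (up to absorbing an absolute constant into $\varepsilon$); at the scale $X = n_k$, the rapid growth of $(n_k)$ yields $A_0(n_k) \le |A_k| + |B_k| + O(n_{k-1}^{1/2}) \ll \varepsilon^{-5/6}\sqrt{\log\varepsilon^{-1}}\, n_k^{1/3}$, and therefore $\liminf_{X\to\infty} X^{-1/3} A_0(X) \le \liminf_k n_k^{-1/3} A_0(n_k) \ll \varepsilon^{-5/6}\sqrt{\log\varepsilon^{-1}}$.

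The main obstacle is verifying (ii) simultaneously with (i): the density hypothesis~\eqref{hypo} must hold uniformly in $x \in [n_k^{1/3}, \varepsilon n_k]$ and $m \in [2x, 2\varepsilon^{-1}x]$, and it is the binding constraint at the top of this range, $x \asymp \varepsilon n_k$, that dictates the $\varepsilon^{-5/6}\sqrt{\log\varepsilon^{-1}}$ scale for $|A_k|$, while the constraint at the bottom, $x \asymp n_k^{1/3}$, enforces sufficient spread of $A_k$ in small primes. A dyadic splitting of the prime range $[n_k^{1/6}, n_k^{1/2}]$, analogous to (but more delicate than) the one in the proof of Proposition~\ref{conv-alphabeta}, combined with the probabilistic deviation bounds from Section~\ref{basic-probab}, is expected to produce such a seed and complete the argument.
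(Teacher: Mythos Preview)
Your overall strategy is the same as the paper's: iterate Lemma~\ref{neolem} along a sequence $n_{k+1}=n_k^3$, feeding it a set of prime products as the set $A$, and take the union of the seeds and the complements $B_k$. Two differences are worth noting. First, the paper builds the seed $P_{2k+1}$ \emph{deterministically}: it partitions $[\varepsilon^{j+1}N,2\varepsilon^jN]$ into $O(\varepsilon^{-2})$ subintervals $I_{j,r}$ of length $\tfrac12\varepsilon^{j+2}N$ and, using the Prime Number Theorem in the square-root intervals, places $\big\lceil 2\sqrt{\varepsilon(\varepsilon^{j+2}N)^{2/3}\log\varepsilon^{-(j+1)}}\big\rceil$ primes in each; summing over $j$ and $r$ gives $|P_{2k+1}|\ll \varepsilon^{-5/6}\sqrt{\log\varepsilon^{-1}}\,N^{1/3}$ directly, and the hypothesis~\eqref{hypo} for $A=P_{2k+1}^2$ is then checked by a short case analysis. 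Your proposed probabilistic selection via Lemma~\ref{chernoff} should also succeed, but the $\varepsilon$-adic (not dyadic) splitting is what produces the exponent $-5/6$ and the $\sqrt{\log\varepsilon^{-1}}$ factor, so you would want to mirror that scale rather than a purely dyadic one. Second, the paper alternates: it runs the prime-product construction only at scales $N_{2k+1}$ and fills the gaps $[2N_{2k-1},2N_{2k}]$ by inserting the full interval $\llbracket N_{2k-1},N_{2k}\rrbracket$ into $A_0$ (harmless since $N_{2k}=N_{2k+1}^{1/3}$). Your version, applying the construction at every scale and letting the windows tile contiguously, is a legitimate simplification; the density estimate then reads $|\llbracket 1,t\rrbracket\setminus(A_0^2+A_0)|\le C\varepsilon t+O(1)$, which is absorbed as you say.

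In short, your plan is correct and on the paper's track; the ``main obstacle'' you flag is exactly where the paper spends its effort, and it resolves it by an explicit PNT-based placement rather than a Chernoff argument.
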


\begin{proof}
We can plainly assume $0<\varepsilon<\frac12$.
Let $(N_k)_{k\ge1}$ be a sequence of integers where $N_1 \varepsilon^{9/2}$ is big enough and $N_{k+1}=N_k^3$. 
This implies that 
\begin{equation}\label{bigen}
N_{2k+1}^{1/9}\varepsilon^{9/2}\quad \text{is big enough for any $k\ge1$.}
\end{equation}
In order to apply Lemma \ref{neolem}, we define our sufficiently big set $A$ according to 
hypothesis \eqref{hypo}. 

\bigskip
Let $k\ge1$ and $N=N_{2k+1}$.
Firstly we define a set of prime numbers $P_{2k+1}\subset [\sqrt{\varepsilon N^{1/3}},\sqrt{2N}]$.
Let $j$ be an integer such that 
$$
0\le j\le \left\lceil\frac{\log N^{2/3}}{\log\varepsilon^{-1}}\right\rceil-1.
$$ 
We split the interval $[\varepsilon^{j+1}N ,2\varepsilon^{j}N ]$
into $O(\varepsilon^{-2})$ intervals of size $\frac{\varepsilon^{j+2}N }2$.
If for some $2\varepsilon^{-1}\le r\le 4\varepsilon^{-2}$  
$$
I_{j,r}:=\left[\frac{r\varepsilon^{j+2}N }2, \frac{(r+1)\varepsilon^{j+2}N }2\right]
$$
is such an interval, then  the 
interval
$$
\left[\sqrt{\frac{r\varepsilon^{j+2}N }2}, \sqrt{\frac{(r+1)\varepsilon^{j+2}N }2}\right]
$$
has length $\gg {\frac{\sqrt{r\varepsilon^{j+2}N }}{{r}}}$, hence by the Prime Number Theorem it
contains at least
$ \frac{\varepsilon^{\frac j2+1}\sqrt{N }}{\sqrt{r}\log N }
~\gg~\frac{\varepsilon^{\frac j2+2}\sqrt{N }}{\log N }$ many primes. 

\medskip
We observe that the above remains true when $\varepsilon$ tends to $0$ when $N$ increases to infinity, as 
for instance $\varepsilon > (\log\log N)^{-1}$. We shall use this fact in the proof of Theorem \ref{locthin}.  
\medskip

We have $\varepsilon^{-j}\le N^{2/3}$ hence by condition \eqref{bigen} 
$$
\varepsilon^{\frac j2+2}\frac{\sqrt{N }}{\log N }\ge 2 \sqrt{{\varepsilon} (\varepsilon^{j+2}N )^{2/3}\log \varepsilon^{-(j+1)}}.
$$
We thus may assign 
$$
\left\lceil  2 \sqrt{{\varepsilon} (\varepsilon^{j+2}N )^{2/3}\log \varepsilon^{-(j+1)}}\right\rceil
$$
prime numbers into $P_{2k+1}$. Arguing similarly for each interval $I_{j,r}$, we obtain the required  sequence of primes  $P_{2k+1}$.

\medskip
Our aim is now to show that hypothesis \eqref{hypo}  in Lemma \ref{neolem} holds with $A=P_{2k+1}^2$ and $n=N$.
Let $x\in\llbracket N^{1/3},\varepsilon N \rrbracket$ and $m\in\llbracket 2x,2\varepsilon^{-1}x\rrbracket
\subset \llbracket 2N^{1/3},2N\rrbracket$.\\[0.5em]
-- If $2\varepsilon^{j+1} N \le m-x< 2\varepsilon^j N$ and $x\ge \varepsilon^{j+1}N$ then
either $m-2x\ge \varepsilon^{j+1}N$ or $\frac{m-x}2\ge m-2x$.
In both cases $\llbracket m-2x,m-x\rrbracket\cap\llbracket
\varepsilon^{j+1}N, 2\varepsilon^{j}N
\rrbracket$ has length $\ge x/2$.
 Hence
\begin{align*}
|P_{2k+1}^2\cap\llbracket m-2x,m-x\rrbracket| &  > 2
\left\lfloor
\frac{x}{\varepsilon^{j+2}N }\right\rfloor \varepsilon (\varepsilon^{j+2}N )^{2/3}\log \varepsilon^{-(j+1)}\\
&> \varepsilon^{2/3}x^{2/3}\log \left(\frac{N }x\right).
\end{align*}

\smallskip
\noindent
-- If $2\varepsilon^{j+1} N  \le m-x< 2\varepsilon^j N $ and $x <  \varepsilon^{j+1}N $
then 
$$
\llbracket m-2x,m-x\rrbracket\subset \llbracket\varepsilon^{j+1}N ,2\varepsilon^{j}N \rrbracket
$$
hence
\begin{align*}
|P_{2k+1}^2\cap\llbracket m-2x,m-x\rrbracket| &  > 2
\left\lfloor
\frac{2x}{\varepsilon^{j+2}N }\right\rfloor \varepsilon(\varepsilon^{j+2}N )^{2/3}\log \varepsilon^{-(j+1)}\\
&> \varepsilon x^{2/3}\log \left(\frac{N }x\right)\times\frac{2(j+1)}{j+2}\ge\varepsilon x^{2/3}\log \left(\frac{N }x\right)
\end{align*}
since $x\ge \frac{\varepsilon m}{2}>\varepsilon^{j+2}N$.

\medskip
Applying Lemma \ref{neolem} we obtain a partial additive complement $B_{2k+1}$ of $P_{2k+1}^2$ in $\llbracket 2N ^{1/3},2N \rrbracket$
such that 
\begin{equation}\label{B2k+1}
|B_{2k+1}|\ll \varepsilon^{-2/3}N^{1/3}.
\end{equation}
Moreover since for any $j$ there are $O(\varepsilon^{-2})$ intervals $I_{j,r}$ we deduce
\begin{align}\label{P2k+1}
|P_{2k+1}|\ll\sum_{j\ge0}\varepsilon^{-2}\sqrt{{\varepsilon} (\varepsilon^{j+2}  N )^{2/3}\log \varepsilon^{-(j+1)}}
&=\varepsilon^{-5/6}\sqrt{\log \varepsilon^{-1}} N ^{1/3}\sum_{j\ge0}\varepsilon^{j/3}\sqrt{j+1}\\
&\ll
\varepsilon^{-5/6}
\sqrt{\log \varepsilon^{-1}} N^{1/3}.\nonumber
\end{align}

\medskip
We define
$$
A_0=\{0,1\}\cup \bigcup_{k\ge1}\Big(\llbracket N_{2k-1},N_{2k}\rrbracket\cup P_{2k+1}\cup B_{2k+1}\Big).
$$
Notice that $S_k:=\llbracket N_{2k-1},N_{2k}\rrbracket\cup P_{2k+1}\cup B_{2k+1}\subset 
\llbracket N_{2k-1},N_{2k+1}\rrbracket$ hence the sets $S_k$'s do not overlap.
By \eqref{B2k+1}, \eqref{P2k+1} and since $N_{2k}=N_{2k+1}^{1/3}$,
we infer
\begin{equation}\label{eqp2k+1}
A_0(N_{2k+1})\le  N_{2k}+|P_{2k+1}|+|B_{2k+1}|\ll \varepsilon^{-5/6}\sqrt{\log \varepsilon^{-1}}N_{2k+1}^{1/3}.
\end{equation}
By Lemma \ref{neolem} again, 
$$
\left|\llbracket2N_{2k},t\rrbracket\smallsetminus (A_0^2+A_0)\right|\le \varepsilon t,\quad  \text{for any 
$2N_{2k}\le t\le 2N_{2k+1}$.}
$$
Furthermore we have $\llbracket 2N_{2k-1}, 2N_{2k}\rrbracket, 
\llbracket 2N_{2k+1}, 2N_{2k+2}\rrbracket\subset A_0+A_0\subset A_0^2+A_0$.  
Thus 
\begin{equation}\label{eqdens3}
\left|\llbracket 1 ,t\rrbracket\smallsetminus (A_0^2+A_0)\right|\le \varepsilon t+2N_{2k-1}=\varepsilon t+O(t^{1/3}),\quad  \text{for any 
$2N_{2k}<t\le 2N_{2k+2}$}.
\end{equation} 
We infer $\dinf{A_0^2+A_0}\ge 1-\varepsilon$.
\end{proof}

\begin{proof}[Proof of Theorem \ref{locthin}]
For deriving Theorem \ref{locthin} we  slightly modify the proof of Theorem \ref{thm53} by  letting $\varepsilon$ to be a function of $k$. We may assume that $\phi(t) < (\log\log t)^3$.
 For fixed $k$, we take $\varepsilon_k=\phi(N_{2k+1})^{-1/3}$. We check that 
 $N_{2k+1}^{1/9}\varepsilon_k^{-9/2}$ is big enough and that $\varepsilon_k > (\log\log N_{2k+1})^{-1}$, allowing us to 
  construct $P_{2k+1}$ as in the above proof using the Prime Number Theorem in slightly shorter intervals
  of the type $[X,X(1+(\log\log X))^{-1}]$.
  By \eqref{eqp2k+1}, \eqref{eqdens3} with $\varepsilon=\varepsilon_k$ and letting $k$ tend to infinity we deduce the required result.
\end{proof}

\section{\bf Probabilistic construction of a thin set $A$ such that $A^2+A^2=\mathbb{N}_0$}
\label{prob_S2+S2}

We define the probability measure $\mathbb{P} = \displaystyle\prod_{a \in \N}p_a$ on $Y= \{0,1\}^{\N}$ by choosing
$$
x_a:=p_a (\{1\})=\frac{c}{\sqrt{a}(\log (a+1))^{1/4}}.
$$
We shall choose $c< 1$ so that $x_a < 1$  and there exists such a probability measure.
%and $x_1: = p_1(\{1\}) =1.$
The following result is easy to prove by partial summation.
\begin{lemma} \label{S2S2lambda} With the notations as above, we have
\begin{equation*} 
\lambda_n :=\sum_{a\leq n} x_a \sim 2c\sqrt{n}(\log n)^{-1/4}.
\end{equation*}
\end{lemma}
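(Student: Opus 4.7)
The plan is to reduce the claim to a standard integral estimate, since the sum $\lambda_n = c\sum_{a\le n}\frac{1}{\sqrt{a}(\log(a+1))^{1/4}}$ involves a smoothly decreasing summand. First I would absorb the shift $\log(a+1) = \log a + O(1/a)$: for $a \ge 2$, the factor $(\log(a+1))^{-1/4}$ equals $(\log a)^{-1/4}(1 + O(1/(a\log a)))$, and the correction contributes $O(\sum_{a\ge 2} a^{-3/2}(\log a)^{-5/4}) = O(1)$, which is negligible compared to the target $\sqrt{n}(\log n)^{-1/4}$. So it suffices to prove
$$\sum_{2\le a\le n}\frac{1}{\sqrt{a}(\log a)^{1/4}} \sim 2\sqrt{n}(\log n)^{-1/4}.$$

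Next I would replace the sum by the integral $I(n) := \int_2^n \frac{dt}{\sqrt{t}(\log t)^{1/4}}$. The integrand $f(t) = t^{-1/2}(\log t)^{-1/4}$ is eventually decreasing (for $t$ large enough that $(\log t)^{-1/4}$ decays more slowly than $t^{-1/2}$ grows in modulus of derivative, which is trivially true here), so the standard integral-comparison inequality gives $\sum_{2\le a\le n} f(a) = I(n) + O(1)$.

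Then I would evaluate $I(n)$ by exhibiting an antiderivative up to a lower-order term. Setting $g(t) := 2\sqrt{t}(\log t)^{-1/4}$, one computes
$$g'(t) = \frac{1}{\sqrt{t}(\log t)^{1/4}} - \frac{1}{2\sqrt{t}(\log t)^{5/4}},$$
so
$$I(n) = 2\sqrt{n}(\log n)^{-1/4} - 2\sqrt{2}(\log 2)^{-1/4} + \frac{1}{2}\int_2^n \frac{dt}{\sqrt{t}(\log t)^{5/4}}.$$
The remaining integral is of order $\sqrt{n}(\log n)^{-5/4}$ (by the same antiderivative trick, or by a crude bound splitting at $t=\sqrt{n}$), hence is $o(\sqrt{n}(\log n)^{-1/4})$. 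Combining these estimates and multiplying by $c$ yields $\lambda_n = 2c\sqrt{n}(\log n)^{-1/4} + O(\sqrt{n}(\log n)^{-5/4})$, which gives the claimed asymptotic.

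There is no genuine obstacle: the proof is a routine application of monotone sum-to-integral comparison followed by one integration by parts (or direct antiderivative computation). The only mild care needed is in the lower endpoint, where $(\log 2)^{-1/4}$ and the value of $f$ at small $a$ contribute bounded terms that are absorbed into the $O(1)$ correction.
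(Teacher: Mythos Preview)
Your proposal is correct and follows essentially the same approach as the paper, which simply states that the lemma ``is easy to prove by partial summation'' without giving details. Your integral comparison plus the antiderivative computation for $g(t)=2\sqrt{t}(\log t)^{-1/4}$ is exactly a concrete realization of that partial summation, and all the error terms are handled cleanly.
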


Hence using Lemma~\ref{S2S2lambda} and Corollary~\ref{card}, we obtain that
\begin{cor} \label{card22} For any $\varepsilon > 0$, we have
$$\mathbb{P}( \{A\subset \N: A(n) \sim  2c\sqrt{n}(\log n)^{-1/4} \text{ as $n$ tends to infinity }\}) =1.$$
\end{cor}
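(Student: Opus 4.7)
The plan is to reduce the statement directly to Corollary~\ref{card}, which was proved earlier for a general sequence $x_a$ under the growth hypothesis $\lambda_n \ge \kappa \log^2 n$. So the only real work is to verify that this hypothesis holds for our specific choice $x_a = c/(\sqrt{a}(\log(a+1))^{1/4})$, and then to transfer the conclusion $A(n)/\lambda_n \to 1$ into the explicit asymptotic $A(n) \sim 2c\sqrt{n}(\log n)^{-1/4}$.

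First, by Lemma~\ref{S2S2lambda} we have $\lambda_n \sim 2c\sqrt{n}(\log n)^{-1/4}$ as $n\to\infty$. In particular there exist $\kappa>0$ and $n_0$ such that $\lambda_n \ge \kappa \log^2 n$ for all $n \ge n_0$, since $\sqrt{n}(\log n)^{-1/4}$ dominates $\log^2 n$ polynomially. This is precisely the hypothesis required by Corollary~\ref{card}.

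Applying Corollary~\ref{card}, we conclude that
$$
\mathbb{P}\left(\left\{A \subset \N : \lim_{n\to\infty}\frac{A(n)}{\lambda_n}=1\right\}\right)=1.
$$
Combined with the asymptotic $\lambda_n \sim 2c\sqrt{n}(\log n)^{-1/4}$ from Lemma~\ref{S2S2lambda}, this yields $A(n) \sim 2c\sqrt{n}(\log n)^{-1/4}$ almost surely, which is the claim.

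There is no real obstacle: the $\varepsilon$ parameter in the statement is a harmless bystander (the conclusion is a single almost-sure limit, so $\varepsilon$ plays no role in the argument), and all the analytic work — the Chernoff-style concentration bound and the Borel--Cantelli summation of $1/n^2$ tail probabilities at the scale $\varepsilon_n \asymp 1/\sqrt{\log n}$ — is already packaged inside Corollary~\ref{card}. The only check is the polynomial-over-logarithm comparison $\lambda_n \gg \log^2 n$, which is immediate.
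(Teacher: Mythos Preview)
Your proof is correct and follows exactly the paper's approach: the paper simply states that Corollary~\ref{card22} follows from Lemma~\ref{S2S2lambda} and Corollary~\ref{card}, and you have spelled out the (easy) verification that $\lambda_n \sim 2c\sqrt{n}(\log n)^{-1/4}$ eventually dominates $\kappa\log^2 n$ so that Corollary~\ref{card} applies. Your remark that the parameter $\varepsilon$ in the statement is vestigial is also accurate.
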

\medskip
We now define the random variable $R(n)$ counting the number of representations of $n$ under the form $n=ab+cd$  restricted to quadruples of distinct integers $a,b,c,d\in A$. In order to avoid repetitions, we also assume 
that $a<b$, $c<d$, $ab\le cd$:
$$
R(n)=\sideset{}{'}\sum_{a,b,c,d}\xi_a\xi_b\xi_c\xi_d
$$ 
where the dash indicates the above restrictions.
In the rest of this section, we shall prove the following result.

\begin{prop} \label{Rn22}For a suitable $c> 0,$ we have
$$\mathbb{P}(\{R(n) = 0\}) \leq \frac{1}{n^{1+\eta}}$$
for some $\eta > 0.$
\end{prop}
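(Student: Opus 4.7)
The plan is to apply Janson's inequality (Lemma~\ref{Janson}). Take the index set $I$ to be the set of admissible quadruples $q = (a,b,c,d)$, set $Z_q = \xi_a\xi_b\xi_c\xi_d$ and let the dependency graph $\Gamma$ join $q,q'$ iff their underlying $4$-element sets meet. Writing $\mu_n = \E(R(n))$ and $\Delta_n = \sum_{(q,q')\in\Gamma}\E(Z_qZ_{q'})$, Lemma~\ref{Janson} yields $\mathbb{P}(R(n)=0) \le \exp(-\mu_n + \Delta_n)$, so the goal reduces to showing that $\mu_n - \Delta_n \ge (1+\eta)\log n$ for some $\eta > 0$ and a suitable $c$.

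First, I would lower bound $\mu_n$. Grouping by $k = ab$ and using $\sqrt{ab}\sqrt{cd} = \sqrt{k(n-k)}$,
$$
\mu_n \asymp c^4 \sum_{k<n} \frac{g(k)g(n-k)}{\sqrt{k(n-k)}},\quad g(k) := \sum_{\substack{ab=k\\a<b}}\frac{1}{(\log(a+1)\log(b+1))^{1/4}}.
$$
Restricting the inner sum to balanced divisors ($k^{1/3}\le a\le k^{1/2}$, for which $\log(a+1)\log(b+1)\asymp(\log k)^2$) gives $g(k)\gg \tau(k)/\sqrt{\log k}$ for most $k$. Combined with the classical shifted convolution $\sum_{k<n}\tau(k)\tau(n-k)\asymp n\log^2 n$ and partial summation to distribute the weight $1/\sqrt{k(n-k)}$, this yields $\mu_n \ge C_1 c^4 \log n$ for an explicit $C_1 > 0$. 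I then choose $c$ so that $C_1 c^4 \ge 1+2\eta$.

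Second, I would bound $\Delta_n$. Since $\xi_v^2 = \xi_v$ and the $\xi_v$ are independent, $\E(Z_q Z_{q'}) = \prod_{v\in q\cup q'}x_v$, and I decompose $\Delta_n$ according to $j = |q\cap q'|\in\{1,2,3\}$. The case $j=4$ is impossible: different pairings of the same $4$-element set give distinct values of $ab+cd$, since $ab+cd=ac+bd$ forces $(a-d)(b-c)=0$, contradicting distinctness. Each remaining case reduces to a multi-variable sum over tuples obeying $ab+cd=a'b'+c'd'=n$ together with the overlap constraints, which can be controlled by standard shifted-divisor estimates of the form $\sum_{k<n}\tau(k)^A/\sqrt{k(n-k)}\ll \log^B n$, augmented by the $(\log(v+1))^{-1/4}$ savings at the shared vertices. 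The target is $\Delta_n = o(\log n)$, so that Janson yields $\mathbb{P}(R(n)=0)\le \exp(-(1+\eta)\log n) = n^{-(1+\eta)}$.

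The main obstacle I expect is the $\Delta_n$ bound in the single-overlap case $j=1$: a $7$-variable sum with two equations and many subcases depending on the position of the shared vertex in $q$ and in $q'$. In each subcase one must bound a shifted convolution of $\tau$-like functions carrying additional divisibility constraints and show that the total is $O(\log^{1-\delta} n)$. A companion difficulty is the uniform tracking of the logarithmic weight $(\log(v+1))^{-1/4}$ across the whole range of $v$, in particular for small $v$ where this weight is of order $1$; this will likely require a dyadic decomposition of the variables to separate regimes.
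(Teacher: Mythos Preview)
Your overall strategy matches the paper's: apply Janson to $R(n)$, lower-bound $\mu_n$ via the shifted convolution $\sum_{k<n}\tau(k)\tau(n-k)$ (Ingham), and show $\Delta_n=o(\mu_n)$ by a case analysis on the overlap pattern. The essential difference is a device you are missing. The paper does \emph{not} compute $\mu_n,\Delta_n$ for the original measure $\mathbb{P}$ at all. Instead it first observes that $I_{\{R(n)\ne0\}}$ is monotone increasing and that $x_a=\frac{c}{\sqrt a\,(\log(a+1))^{1/4}}\ge p'_a:=\frac{c}{\sqrt a\,(\log(n+1))^{1/4}}$ for $a\le n$; Lemma~\ref{transfert} then gives $\mathbb{P}(R(n)=0)\le \mathbb{P}'_n(R(n)=0)$, and Janson is applied to $\mathbb{P}'_n$. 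With this reduction every logarithmic weight is the \emph{constant} $(\log n)^{-1/4}$, so the seven-variable single-overlap sum immediately picks up $(\log n)^{-7/4}$ and the inequality $\sum_a S_a^2/\sqrt a\ll\log^2 n$ finishes that case; the remaining four overlap types are handled with the uniform weight as well, yielding $\Delta'_n\ll \mu'_n/(\log n)^{1/4}$. Your plan to track $(\log(v+1))^{-1/4}$ directly would work in principle, but the dyadic splitting you anticipate becomes a genuine case-by-case grind (several variables can simultaneously be $O(1)$), and it is exactly what the transfer lemma is there to avoid.

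Two smaller points. First, your $\mu_n$ argument via ``balanced divisors'' is unnecessary and not quite right: a generic $k$ need not have $\gg\tau(k)$ divisors in $[k^{1/3},k^{1/2}]$ (primes have none). The bound $g(k)\gg\tau(k)/\sqrt{\log k}$ holds for every $k\ge2$ simply because $\log(a+1)\log(b+1)\le(\log k)^2$ whenever $ab=k$; equivalently, after the transfer one gets $\mu'_n\sim\frac{c^4}{8\log n}\sum_k\frac{\tau(k)\tau(n-k)}{\sqrt{k(n-k)}}$ directly. Second, calling the $\Delta_n$ bound ``standard shifted-divisor estimates'' undersells it: the double-overlap cases where the shared pair straddles the two products (the paper's cases (iii) and (iv)) require a B\'ezout-type parametrisation of the solutions of $\alpha b+\gamma d=\alpha b'+\gamma d'=n/q$ together with the elementary lemma $\sum_{\frac12-v\le j\le u-\frac12}\big((u-j)(v+j)\big)^{-1/2}\le 12$; these are the steps that actually cost work, not the single-overlap case.
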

Using Corollary~\ref{card22}, Proposition~\ref{Rn22} and  Lemma~\ref{R(n)gen}, we obtain Theorem~\ref{thm13}.

\vspace{.5cm}
Let $\mathbb{P}'_n = \prod_{a=1}^n p'_a$ be a probability measure on $Y_n = \{0,1\}^{\N}$ with $p'_a(\{1\}) = \frac{c}{\sqrt{a}(\log (n+1))^{1/4}}.$ It is easy to see that $I_{(R(n) \neq 0)}$ is a monotone increasing function on $Y_n.$ Therefore to prove Proposition~\ref{Rn22}, using Lemma~\ref{transfert}, it is sufficient to prove that for a suitable $c > 0$, we have
\begin{equation*}
\mathbb{P}'_n(\{R(n) = 0\}) \leq \frac{1}{n^{1+\eta}}
\end{equation*}
for some $\eta > 0.$

\vspace{.5cm}
 Let $I = \{(a,b,c,d) \in \N^4: ab + cd =n; \; a, b, c, d \text{ distinct}, \ a<b,\ c<d,\ ab<cd\}$ be an index set and for any $(a,b,c, d) \in I$, with 
\begin{equation*}
Z_{(a,b,c,d)} = \xi_a\xi_b\xi_c\xi_d. ; \text{ we have } Z = \sum_{(a,b,c,d) \in I} Z_{(a,b,c,d)}.
\end{equation*}
Hence $R(n)$ is a sum of Boolean random variables. For $(a,b,c,d), (a',b', c', d') \in I$, the random variables $Z_{(a,b,c,d)}$ and $Z_{(a',b',c',d')}$ are independent if and only if $\{a,b,c,d\} \cap \{a', b', c', d'\}~=~\emptyset.$
Note that if $n=ab+cd=a'b'+c'd'$ and $(a,b, c, d) \neq (a',b',c',d'),$ then $\{a,b,c,d\}\ne \{a',b',c',d'\}$: indeed if
for instance $n=ab+cd=ac+bd$ then $a(b-c)=d(b-c)$ hence $a=d$ since $b\ne c$, a contradiction.

\medskip\noindent
Let $n\ge1$. For any quadruple of distinct positive integers $a,b,c,d$, we denote by $E_n(a,b,c,d)$ the event
$$
n=ab+cd 
%\quad \text{and}\quad  \frac n4< ab,cd<\frac{3n}{4}
 \quad \text{and}\quad \xi_a\xi_b\xi_c\xi_d=1.
$$
We observe that the events
$E_n(\sigma(a),\sigma(b),\sigma(c),\sigma(d))$, where $\sigma$ runs in the set of all permutations of $\{a,b,c,d\}$, are disjoint.
Moreover
$$
\mu_n:=\mathbb{E} (R(n))=\sideset{}{'}\sum_{a,b,c,d} \mathbb{P} (E_n(a,b,c,d)).
$$
where the dash in the summation means $a,b,c,d$ are distinct and $a<b$, $c<d$ and $ab<cd$.
If the events $E_n(a,b,c,d)$ where mutually independent we would have
$$
\mathbb{P} (R(n)=0)=\mathbb{P} \left(\bigcap_{a,b,c,d}\overline{E_n(a,b,c,d)}\right)
=\prod_{a,b,c,d}\left(1-\mathbb{P} (E_n(a,b,c,d)\right)\sim e^{-\mu_n}
$$
as $n$ tends to infinity. If $\mu_n\sim c'\log n$ as $n$ tends to infinity, with $c'>1$, then
we could deduce from Borel-Cantelli Lemma (cf. Lemma \ref{Borel-Cantelli}) that  almost surely $R(n)\ne0$ for any large enough $n$. \\
But the events $E_n(a,b,c,d)$ are not mutually independent, hence we need to measure their dependence.  We denote $(a,b,c,d)\sim(a',b',c',d')$ if $\{a,b,c,d\}\cap\{a',b',c',d'\}\ne\emptyset$ and
$(a,b,c,d)\ne(a',b',c',d')$. We are going to  concentrate on the estimation of 
$$
\Delta_n:=\sum_{(a,b,c,d)\sim(a',b',c',d')}\mathbb{P} \Big(E_n(a,b,c,d)\cap E_n(a',b',c',d')\Big).
$$
Our goal is to prove that $\mu_n\sim c'\log n$ and $\Delta_n=o(\log n)$. 
We will conclude 
by Janson's inequality (cf. Lemma \ref{Janson}).

\medskip
Let $\tau$ the divisor function. Our estimates will need the following classical facts: 
\begin{equation*}
\tau(m) \leq 2\sum_{\substack{l \leq \sqrt{m}\\ l\mid m}} 1,
\end{equation*}
Moreover for any $\varepsilon > 0$, we have 
$\tau(n)\ll_{\varepsilon} n^{\varepsilon}.$ Finally $\sum_{d\mid n}\frac1d\ll \log\log n$.

\medskip
We now come to our problem and start to estimate $\mu_n$ and $\Delta_n$.\\[0.5em]
Firstly by the next lemma (cf. Lemma \ref{estilem}) we have the lower bound
 \begin{equation}\label{mun}
 \mu_n=\sideset{}{'}\sum_{a,b,c,d}p_ap_bp_cp_d \sim  \frac{c^4}{8\log (n+1)} \sum_{0<k<n
 %n/4<k<3n/4
 }\frac{\tau(k)\tau(n-k)}{\sqrt{n-k}\sqrt{k}} 
 > \left(\frac{3c^4}{4\pi} +o(1)\right) \log n.
 \end{equation}
 The factor $8$ in the denominator compensates  for the
 restrictions on $a,b,c,d$. 
 We used also the fact that  the contribution in the sum over $a,b,c,d$ in which two variables coincide
 is $O(\log\log n)$. Indeed:\\
 - when $a=b$ in $n=ab+cd$, the contribution is
 $$
 \ll \sum_{0<a<\sqrt{n}}\frac{\tau(n-a^2)}{a\sqrt{n-a^2}}\ll n^{\varepsilon}
 \sum_{0<a<\sqrt{n}}\frac{1}{a\sqrt{n-a^2}}.
 $$
 In the sum, for $0<a\le \frac{\sqrt{n}}2$, we get $O(\frac{\log n}{\sqrt{n}})$ ; for $\frac{\sqrt{n}}2<a<\sqrt{n}-1$, we get $O(\frac1{n^{1/4}})$; for $a=\lfloor \sqrt{n}\rfloor$ we get $O(\frac1{\sqrt{n}})$.\\
 - when $a=c$ the contribution is 
 $
 \sum_{a\mid n}\frac1a\sum_{0<b<\frac na}\frac1{\sqrt{b}\sqrt{\frac na-b}}\ll \log\log n$ by the easy estimate
 $$
 \sum_{0<k<n}\frac1{\sqrt{k(n-k)}}\sim \int_0^n\frac{dt}{\sqrt{t(n-t)}}=\pi +o(1).
 $$

 \begin{lem}\label{estilem}
 One has
 $$
 T_n:=\sum_{0<k<n}\frac{\tau(k)\tau(n-k)}{\sqrt{n-k}\sqrt{k}}\ge \frac{6+o(1)}{\pi}(\log n)^2, \quad n\to\infty.
 $$
 \end{lem}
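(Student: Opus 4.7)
The plan is to expand each divisor function, reducing $T_n$ to a weighted count of the integer points on the line $du+ev=n$, and then to apply a Riemann-sum approximation followed by a classical divisor sum. Writing $\tau(k)=\sum_{d\mid k}1$ twice and substituting $k=du$, $n-k=ev$, one gets
\[
T_n=\sum_{\substack{d,u,e,v\ge 1\\ du+ev=n}}\frac{1}{\sqrt{du\cdot ev}}.
\]
The two involutions $(d,u)\leftrightarrow(u,d)$ and $(e,v)\leftrightarrow(v,e)$ pair the quadruples into orbits of size $4$, apart from the diagonal terms $d=u$ or $e=v$, whose total contribution is easily shown to be $o\bigl((\log n)^2\bigr)$. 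Dropping the non-negative diagonal part yields the lower bound
\[
T_n\ \ge\ 4\!\!\sum_{\substack{d<u,\ e<v\\ du+ev=n}}\!\!\frac{1}{\sqrt{du\cdot ev}}.
\]

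Next I would fix a slowly growing cutoff $X=\sqrt{n}/\log\log n$ and keep only the pairs with $d,e\le X$. Writing $g=\gcd(d,e)$, the solutions of $du+ev=n$ with $u>d$, $v>e$ (which exist only when $g\mid n$) correspond, via $k=du$, to an arithmetic progression inside $(d^2,n-e^2)$ of common difference $de/g$. The inner sum is thus a Riemann sum for $\int dk/\sqrt{k(n-k)}$, yielding
\[
\sum_{(u,v)}\frac{1}{\sqrt{du\cdot ev}}=\frac{g}{de}\int_{d^2}^{n-e^2}\frac{dk}{\sqrt{k(n-k)}}+O\!\left(\frac{1}{d\sqrt n}+\frac{1}{e\sqrt n}\right)=\frac{\pi g}{de}\bigl(1+o(1)\bigr),
\]
since this integral equals $\pi-2\arcsin(d/\sqrt n)-2\arcsin(e/\sqrt n)\to\pi$ uniformly for $d,e\le X=o(\sqrt n)$.

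It remains to evaluate the outer double sum. Discarding every term with $g\ge 2$ (which only strengthens the inequality) and applying M\"obius inversion to the coprime-pair sum,
\[
\sum_{\substack{d,e\le X\\ \gcd(d,e)=1}}\frac{1}{de}\ \sim\ \frac{6}{\pi^{2}}(\log X)^{2}\ \sim\ \frac{3}{2\pi^{2}}(\log n)^{2}.
\]
Combining everything yields
\[
T_n\ \ge\ 4\cdot \pi\cdot \frac{3}{2\pi^{2}}(\log n)^{2}\bigl(1+o(1)\bigr)=\frac{6}{\pi}(\log n)^{2}\bigl(1+o(1)\bigr),
\]
which is the announced bound.

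The main technical point is the uniformity of the Riemann-sum approximation near the singular endpoints $k=d^{2}$ and $k=n-e^{2}$, where the integrand $1/\sqrt{k(n-k)}$ blows up. One however checks that the boundary error for each fixed $(d,e)$ is $O\bigl(1/(d\sqrt n)+1/(e\sqrt n)\bigr)$, which when summed over $d,e\le X$ contributes only $O(\log n/\log\log n)=o\bigl((\log n)^{2}\bigr)$, safely below the main term.
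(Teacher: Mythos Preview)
Your argument is correct and takes a genuinely different route from the paper's. The paper invokes Ingham's asymptotic
\[
\sum_{0<k<n}\tau(k)\tau(n-k)=\frac{6}{\pi^{2}}\,n(\log n)^{2}\sum_{q\mid n}\frac1q
\]
as a black box, drops the factor $\sum_{q\mid n}1/q\ge1$, and then recovers $T_n$ by Abel summation against the weight $1/\sqrt{k(n-k)}$. Your proof instead opens up both divisor functions, reducing $T_n$ to a sum over solutions of $du+ev=n$, and evaluates the inner sum over $(u,v)$ as a Riemann sum for $\int dk/\sqrt{k(n-k)}=\pi+o(1)$; the remaining double sum $\sum_{\gcd(d,e)=1}1/(de)$ is handled by M\"obius inversion. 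In effect you are redoing, in this weighted setting, the combinatorial computation that underlies Ingham's theorem, so your argument is more self-contained (no appeal to~\cite{In}) at the cost of being longer. Both approaches lose the factor $\sum_{q\mid n}1/q$ in the same way: the paper drops it explicitly, you drop it when you restrict to $\gcd(d,e)=1$.

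Two minor remarks on presentation. First, the claim that the diagonal contribution ($d=u$ or $e=v$) is $o((\log n)^2)$ is not actually needed: the inequality $T_n\ge 4\sum_{d<u,\,e<v}$ follows immediately from non-negativity. Second, your Riemann-sum error bound is right, but it is worth stating that it comes from the total-variation estimate $\bigl|\sum_j f(x_j)-\tfrac1h\int f\bigr|\le \operatorname{Var}(f)$ applied on the two monotone pieces of $f(k)=1/\sqrt{k(n-k)}$; this makes the $O(1/(d\sqrt n)+1/(e\sqrt n))$ bound transparent and shows that the step size $de$ does not enter.
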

 
 \begin{proof}[Proof of the lemma] We argue by partial summation, using the estimate due to Ingham (cf. \cite{In}).
 $$
\sum_{0<k<n}{\tau(k)\tau(n-k)}=\frac{6}{\pi^2}n(\log n)^2\sum_{q\mid n}\frac1q
\ge U(n):=\frac{6}{\pi^2}n(\log n)^2.
 $$
 We thus have
 $$
 \frac{T_n}2 \ge  \sum_{0<k<\frac n2}\frac{\tau(k)\tau(n-k)}{\sqrt{n-k}\sqrt{k}}
 \ge \frac{U(\frac n2)}{\frac n2}+\frac3{\pi^2} \int_1^{\frac n2}
 \frac{(n-2t)(\log t)^2}{\sqrt{t}(n-t)^{3/2}}dt.
 $$
 The above integral is equivalent to
 $$
(\log n)^2 \int_{1}^{\frac n2}\frac{(n-2t)}{\sqrt{t}(n-t)^{3/2}}dt
=(\log n)^2 \int_{1}^{\frac n2}\frac{dt}{\sqrt{t}\sqrt{n-t}} - 
(\log n)^2\int_{1}^{\frac n2}\frac{\sqrt{t}dt}{(n-t)^{3/2}}dt.
 $$
 By partial summation
 $$
 \int_{1}^{\frac n2}
 \frac{\sqrt{t}dt}{(n-t)^{3/2}}dt=2+o(1)-\int_{1}^{\frac n2}\frac{dt}{\sqrt{t}\sqrt{n-t}}
 $$
 hence the result since $\displaystyle\int_{0}^{ 1}\frac{du}{\sqrt{u}\sqrt{1-u}}=\pi$.
 \end{proof}

 \smallskip\noindent
 Secondly we observe that $(a,b,c,d)\sim(a',b',c',d')$ holds for only  $5$ different types of configurations:
\begin{enumerate}
\item[i)] $a=a'$ and  $a,b,c,d,b',c',d'$ are distinct

\item[ii)] $a=a'$, $b=b'$ and $a,b,c,d,c',d'$ are distinct

\item[iii)] $a=a'$, $c=c'$ and  $a,b,c,d,b',d'$ are distinct

\item[iv)] $a=a'$, $b=d'$  and  $a,b,c,d,c',d'$ are distinct

\item[v)] $a=a'$, $b=d'$, $c=c'$ and $a,b,c,d,d'$ are distinct 

%\item $a=a'$, $d=b'$, $b=d'$, $c=c'$  and $a,b,c,d$ are distinct 

\end{enumerate} 
 In the sequel we shall treat them separately and show that the corresponding contributions $E_i$, $i=1,\dots,5$, are negligible.
 
 \medskip
\noindent
Contribution (i). The representations of $n$ under the form $n=ab+cd=ab'+c'd'$ contribute for at most
$$
E_1\ll
\frac{1}{(\log n)^{7/4}}
\sum_{\substack{a,b,c,d,b',c',d'\\n=ab+cd=ab'+c'd'}}\frac1{\sqrt{abcdb'c'd'}}
= \frac{1}{(\log n)^{7/4}}\sum_{1 \leq a < n}\frac{1}{\sqrt{a}}\left(\sum_{b}\frac{\tau(n-ab)}{\sqrt{b(n-ab)}}\right)^2.
$$
 
 \begin{lemma}\label{lem65}
Let $a< b$ be real numbers and $a_1, \ldots, a_k \in [a, b]$ with $a_i - a_{i-1} \geq l > 0.$  If $f: (a-l, b] \to \R^+$ is a monotonically decreasing function, then
\begin{equation*}
f(a_1)+\ldots+f(a_k) \leq \frac{1}{l}\int_{a-l}^b f(u)du.
\end{equation*} 
If $f:[a, b+l)\to \R^+$ is a monotonically increasing function, then
 \begin{equation*}
f(a_1)+\ldots+f(a_k) \leq \frac{1}{l}\int_{a}^{b+l} f(u)du.
\end{equation*} 
\end{lemma}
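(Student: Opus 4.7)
The plan is to prove both inequalities by the standard device of comparing each term $f(a_i)$ to the integral of $f$ over a length-$l$ window adjacent to $a_i$, chosen so that monotonicity turns the inequality around in our favor, and then to use the spacing hypothesis to ensure these windows are pairwise non-overlapping.

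For the decreasing case I would attach to each $a_i$ the interval $J_i := [a_i - l, a_i]$. Since $f$ is decreasing on $(a-l,b]$ and each such interval lies in that domain (because $a_i \ge a$ implies $a_i - l \ge a - l$), one has $f(u) \ge f(a_i)$ for $u \in J_i$, hence $l\, f(a_i) \le \int_{J_i} f(u)\, du$. The hypothesis $a_i - a_{i-1} \ge l$ gives $a_{i-1} \le a_i - l$, so the intervals $J_1, \ldots, J_k$ have pairwise disjoint interiors and their union is contained in $[a_1 - l, a_k] \subset [a-l, b]$. Summing and dividing by $l$ yields the first inequality.

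The increasing case is symmetric: attach instead $J_i := [a_i, a_i + l]$, which lies in $[a, b+l)$ because $a_i \le b$. Monotonicity gives $f(u) \ge f(a_i)$ on $J_i$, the spacing condition $a_{i+1} - a_i \ge l$ makes the $J_i$ non-overlapping, and their union is contained in $[a, b+l)$. Summing produces the second inequality.

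There is essentially no obstacle here; the statement is a clean integral-comparison lemma. The only point requiring care is to verify that the windows $J_i$ remain inside the domain of $f$ (which is why the domains are stated as the slightly enlarged intervals $(a-l,b]$ and $[a,b+l)$ rather than just $[a,b]$) and that the spacing hypothesis is strong enough to prevent overlap; both are immediate from the conditions $a_i \in [a,b]$ and $a_i - a_{i-1} \ge l$.
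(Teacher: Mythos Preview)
Your proof is correct and is the standard argument; the paper itself states this lemma without proof, so there is no alternative approach to compare against.
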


\medskip
 We will readily derive $E_1\ll (\log n)^{1/4}$ from the following lemma.
\begin{lemma}
For any $a$, let $S_a = \displaystyle\sum_{b}\frac{\tau(n-ab)}{\sqrt{b(n-ab)}}$. Then
\begin{equation}\label{one-term}
 \sum_{a\leq n}\frac{S_a^2}{\sqrt{a}} \ll \log^2n.
\end{equation}
\end{lemma}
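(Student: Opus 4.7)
The plan is to derive a pointwise bound for $S_a$ and then sum. Specifically, I claim that
\begin{equation*}
S_a \ll \frac{\tau(a)\log n}{\sqrt{a}} \qquad (1 \le a \le n),
\end{equation*}
from which \eqref{one-term} follows immediately by
\begin{equation*}
\sum_{a \le n}\frac{S_a^2}{\sqrt{a}} \ll (\log n)^2 \sum_{a \ge 1}\frac{\tau(a)^2}{a^{3/2}} \ll \log^2 n,
\end{equation*}
the last series being convergent since its Dirichlet series equals $\zeta(3/2)^4/\zeta(3)$.

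To bound $S_a$, I first substitute $k = n-ab$ (so $b = (n-k)/a$) to rewrite
\begin{equation*}
S_a = \sqrt{a}\sum_{\substack{0<k<n\\ k \equiv n \pmod a}}\frac{\tau(k)}{\sqrt{k(n-k)}},
\end{equation*}
and split the sum at $k = n/2$. For $k \le n/2$, one has $\sqrt{n-k} \ge \sqrt{n/2}$, so this portion is bounded by $\sqrt{2a/n}\sum_{k \le n/2,\,k \equiv n \pmod a}\tau(k)/\sqrt{k}$. For $k > n/2$, set $j = n-k \le n/2$; since $k \equiv n \pmod a$, $a$ divides $j$, so $j = am$ with $m \le n/(2a)$, and this portion equals
\begin{equation*}
\sqrt{a}\sum_{m \le n/(2a)}\frac{\tau(n-am)}{\sqrt{am(n-am)}} \le \sqrt{2/n}\sum_{m \le n/(2a)}\frac{\tau(n-am)}{\sqrt{m}}.
\end{equation*}

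Both sums are estimated by the arithmetic-progression bound
\begin{equation*}
\sum_{\substack{k \le N\\ k \equiv r \pmod a}}\tau(k) \ll \tau(a)\,(N/a)\log N,
\end{equation*}
which is uniform in $r$ and can be obtained from Shiu's theorem (after factoring $\gcd(r,a)$) or directly by expanding $\tau(k) = \sum_{d\mid k}1$ and counting residues. Partial summation then gives $\sum_{k \le N,\,k \equiv r \pmod a}\tau(k)/\sqrt{k} \ll \tau(a)\sqrt{N}(\log N)/a$, which applied with $N = n/2$ and the prefactor $\sqrt{a/n}$ yields $\ll \tau(a)(\log n)/\sqrt{a}$ for the first portion. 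The identical argument applied to the shifted progression $\{n-am\}_m$ gives $\sum_{m \le M}\tau(n-am)/\sqrt{m} \ll \tau(a)\sqrt{M}\log n$, and with $M = n/(2a)$ together with the prefactor $\sqrt{2/n}$, the second portion is also $\ll \tau(a)(\log n)/\sqrt{a}$.

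The main technical obstacle is the uniformity of the divisor-function estimate in the residue $r = n \bmod a$, as $\gcd(a,n)$ may be arbitrarily large. Shiu's theorem strictly requires $\gcd(r,a) = 1$, so a preliminary reduction — writing $d = \gcd(a,n)$, $k = dk'$, $a = da'$ with $\gcd(n/d,a') = 1$, and applying Shiu's bound to the coprime progression modulo $a'$, together with $\tau(dk') \le \tau(d)\tau(k')$ — is needed. The extra factor $\tau(d)$ is absorbed into the $\tau(a)$ appearing in the final estimate, and the resulting bound is strong enough because $\sum_a \tau(a)^2/a^{3/2}$ still converges.
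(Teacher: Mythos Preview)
Your pointwise bound $S_a \ll \tau(a)(\log n)/\sqrt{a}$ is false for large $a$, and this breaks the argument. Take $a$ prime with $n/2<a<n$; then $b=1$ is the only admissible value and $S_a=\tau(n-a)/\sqrt{n-a}$, which is $\asymp 1$ for infinitely many $a$ (e.g.\ whenever $n-a=2$), while your claimed bound is $2(\log n)/\sqrt{a}\asymp (\log n)/\sqrt{n}\to 0$. The failure traces back to the arithmetic-progression divisor estimate: Shiu's theorem (and the elementary expansion $\tau(k)=\sum_{d\mid k}1$) only yields $\sum_{k\le N,\,k\equiv r\,(a)}\tau(k)\ll \tau(a)(N/a)\log N$ when $a\le N^{1-\delta}$; for $a$ comparable to or larger than $N$ the sum has $O(1)$ terms, each possibly of size $N^{o(1)}$, and the bound is simply wrong. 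Moreover, even in the range where Shiu applies, partial summation requires control of $A(t)=\sum_{k\le t,\,k\equiv r\,(a)}\tau(k)$ for all $t\ge 1$, and for $t<a$ one has $A(t)\in\{0,\tau(r)\}$; the resulting boundary contribution $\tau(r)/\sqrt{r}$ is not absorbed by $\tau(a)\sqrt{N}(\log N)/a$ when $r=n\bmod a$ is small.

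The paper avoids this by splitting at $a=n^{\varepsilon}$. For $a\ge n^{\varepsilon}$ it uses the crude bound $\tau(n-ab)\ll_\varepsilon a^{\varepsilon}$ (since $n-ab<n\le a^{1/\varepsilon}$) to get $S_a\ll a^{\varepsilon-1/2}$ plus an explicit ``last term'' $b=\lfloor n/a\rfloor$, and shows directly that $\sum_{a\ge n^\varepsilon}S_a^2/\sqrt{a}\ll_\varepsilon 1$ (the last terms being summed via $\frac1n\sum_k\frac{\tau(k)^2}{k}\sum_{a\mid n-k}\sqrt{a}$). Only for $a<n^{\varepsilon}$ does it run the AP divisor argument, and even there it isolates the short boundary ranges $b\le\sqrt{n}$ and $b\ge n/a-\sqrt{n}$ and bounds them with $\tau\ll n^{\varepsilon}$, obtaining $S_a\ll \tau(a)(\log n)/\sqrt{a}+an^{\varepsilon}/n^{1/4}$. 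Your approach is morally the same for small $a$ but omits the separate treatment of large $a$ and of the boundary terms, and without those the proof does not go through.
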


\begin{proof}[Proof of the lemma]
\noindent
For any $\varepsilon > 0$ and $a \geq n^{\varepsilon}$, we have $\tau(n-ab) \ll_{\varepsilon} a^{\varepsilon}$ with implied constant being independent of $b$ and depending only upon $\varepsilon$. Hence we have
\begin{align*}
\sum_{1 \leq b \leq \frac{n}{a} -1}\frac{\tau(n-ab)}{\sqrt{b(n-ab)}} &\ll_{\varepsilon} \frac{a^{\varepsilon}}{\sqrt{a}}\left(\sum_{1 \leq b \leq \frac{n}{2a}}\frac{1}{\sqrt{b}\sqrt{\frac{n}{a}-b}} + \sum_{ \frac{n}{2a}\leq b \leq \frac{n}{a}-1} \frac{1}{\sqrt{b}\sqrt{\frac{n}{a}-b}}  \right) \\
&\ll_{\varepsilon}  \frac{a^{\varepsilon}}{\sqrt{n}}\left(\sum_{1 \leq b \leq \frac{n}{2a}} \frac{1}{\sqrt{b}} + \sum_{ \frac{n}{2a}\leq b \leq \frac{n}{a}-1} \frac{1}{\sqrt{\frac{n}{a}-b}}\right) \\
& \ll_{\varepsilon} \frac{a^{\varepsilon}}{\sqrt{a}}.
\end{align*}
Hence we have
\begin{equation}\label{agepgen}
\sum_{n^{\varepsilon} \leq a \leq n} \frac{1}{\sqrt{a}} S_a^2 \ll_{\varepsilon} 1 + \sum_{n^{\varepsilon} \leq a \leq n} \frac{1}{\sqrt{a}}
\left(\sum_{\frac{n}{a}-1 < b \leq \frac{n}{a}} \frac{\tau(n-ab)}{\sqrt{b(n-ab)}}\right)^2.
\end{equation}
For any fixed $a$, there exists at most one integer $b_0 \in (\frac{n}{a} -1, \frac{n}{a}]$ and for such an integer $b_0$, let $k_a = n-ab_0.$ We have that $a$ divides $n-k_a$ and $b_0 \gg \frac{n}{a}.$ Hence we get
\begin{align}\label{agepspec}
\sum_{a\leq n}\frac{1}{\sqrt{a}}
\left(\sum_{\frac{n}{a}-1 < b \leq \frac{n}{a}} \frac{\tau(n-ab)}{\sqrt{b(n-ab)}}\right)^2 & \ll  \frac{1}{n}\sum_{a \leq n} \frac{\sqrt{a}d^2(k_a)}{k_a} \\
& \leq  \frac{1}{n}\sum_{ k\leq n}\frac{d^2(k)}{k}\sum_{a|n-k}\sqrt{a}\nonumber \\
&\ll_{\varepsilon}  \frac{n^{\varepsilon}}{\sqrt{n}}\log n.\nonumber
\end{align}
Using \eqref{agepgen}, \eqref{agepspec} and the inequality $(c+d)^2 \leq 2(c^2 + d^2)$, we obtain
\begin{equation}\label{agep}
\sum_{n^{\varepsilon} \leq a \leq n}\frac{S_a^2}{\sqrt{a}} \ll_{\varepsilon} 1.
\end{equation}
When $a \leq n^{\varepsilon}$ and $n$ is sufficiently large, we have
\begin{align*}
\sum_{\sqrt{n} < b < \frac{n}{a} -\sqrt{n}}\frac{\tau(n-ab)}{\sqrt{b(n-ab)}} & \leq  2 \sum_{l\leq \sqrt{n}}
\sum_{\substack{\sqrt{n} \leq b \leq \frac{n}{a} -\sqrt{n}\\
l\mid (n-ab)}} \frac{1}{\sqrt{b(n-ab)}}\nonumber \\
& \leq  2\sum_{\substack{l_1 \leq \sqrt{n}\\
l_1 | {\rm gcd}(a,n)}}
\sum_{\substack{l_2 \leq \frac{\sqrt{n}}{l_1}\\ {\rm gcd}(\frac{a}{l_1}, l_2)=1}}
\sum_{\substack{\sqrt{n} \leq b \leq \frac{n}{a} -\sqrt{n}\\
l_2\mid \frac{n-ab}{l_1}}}\frac{1}{\sqrt{b(n-ab)}}.\nonumber
\end{align*} 
Hence by Lemma \ref{lem65}
\begin{align}\label{alepgen}
\sum_{\sqrt{n} < b < \frac{n}{a} -\sqrt{n}}\frac{\tau(n-ab)}{\sqrt{b(n-ab)}}& \leq  2 \sum_{\substack{l_1 \leq \sqrt{n}\\
l_1 | {\rm gcd}(a,n)}}
\sum_{\substack{l_2 \leq \frac{\sqrt{n}}{l_1}\\ {\rm gcd}(\frac{a}{l_1}, l_2)=1}}\frac{1}{l_2}\int_0^{\frac{n}{a}}\frac{du}{\sqrt{u(n-au)}}\\
& \ll  \frac{d(a)}{\sqrt{a}}\log n.\nonumber
\end{align} 
When $a \leq n^{\varepsilon},$ we also have
\begin{align}\label{alnepsmallint} 
\sum_{b \leq \sqrt{n}}\frac{\tau(n-ab)}{\sqrt{b(n-ab)}} + \sum_{\frac{n}{a}-\sqrt{n}\leq b \leq \frac{n}{a}} \frac{\tau(n-ab)}{\sqrt{b(n-ab)}} &\ll_{\varepsilon} \frac{n^{\varepsilon}}{n^{1/4}} +\frac{\sqrt{a}n^{\varepsilon}}{\sqrt{n}}\sum_{m \leq a\sqrt{n}}\frac{1}{\sqrt{m}} \\
& \ll_{\varepsilon}  \frac{an^{\varepsilon}}{n^{1/4}}.\nonumber
\end{align}
Using
\eqref{alepgen} and \eqref{alnepsmallint}, we obtain that
\begin{equation}\label{alep}
\sum_{a\leq n^{\varepsilon}}\frac{1}{\sqrt{a}}S_a^2 \ll_{\varepsilon} \log^2 n + n^{(9\varepsilon -1)/2}.
\end{equation}
Using \eqref{agep} and \eqref{alep} with $\varepsilon = 1/9$, we obtain the result.
\end{proof}

\medskip
\noindent
Contribution (ii). The representations of $n$ under the form $n=ab+cd=ab+c'd'$ contribute for at most
$$
E_2\ll
\frac{1}{(\log n)^{3/2}}
\sum_{\substack{a,b,c,d,c',d'\\n=ab+cd=ab+c'd'}}\frac1{\sqrt{abcdc'd'}}
$$
Letting $h=ab$, the inner sum becomes
$$
\sum_{0<h<n}\frac{\tau(h)\tau(n-h)^2}{\sqrt{h}(n-h)}\ll n^{\varepsilon-1/2}.
$$

\medskip
\noindent
Contribution (iii). We have $n=ab+cd=ab'+cd'$. Let $q=\gcd(a,c)$. Then $q\mid n$ and
$$
\frac nq=\alpha b+\gamma d=\alpha b'+\gamma d',\quad \alpha =\frac aq,\quad \gamma =\frac cq.
$$
Let $\alpha ,\gamma $ fixed.
Since $\gcd(\alpha ,\gamma )=1$ we have $\alpha \mid (d-d')$ and $\gamma \mid (b-b')$. Let $(b_{\alpha ,\gamma },d_{\alpha ,\gamma })$ a fixed solution of the equation
$\frac nq=\alpha x+\gamma y$. Then there exists $\lambda\in\mathbb{Z}$ such that $(b,d)=(b_{\alpha ,\gamma }-\lambda \gamma ,d_{\alpha ,\gamma }+\lambda \alpha )$. Similarly $(b',d')=(b_{\alpha ,\gamma }-\mu \gamma ,d_{\alpha ,\gamma }+\mu \alpha )$ for some integer $\mu$. When $b,d$ run in $(0,n)\cap\mathbb{N}$ according to the given restrictions, $\lambda$ runs in some interval $I_{\alpha ,\gamma }$. Further there exists at most a $\lambda_0\in I_{\alpha ,\gamma }$ such that $b_{\alpha ,\gamma }-\lambda_0 \gamma  < \frac{\gamma }2$ and at most a $\lambda_1 \in I_{\alpha ,\gamma }$ such that $d_{\alpha ,\gamma }-\lambda_1 \alpha  < \frac{\alpha}2$. The contribution corresponding to case (3) is
$$
\ll E_3:= \frac{1}{(\log n)^{3/2}}\sum_{q\mid n}\frac1q\sum_{\alpha ,\gamma }\frac{1}{\sqrt{\alpha }\sqrt{\gamma }}
\sum_{\substack{b,d,b',d' \\ n=q(\alpha b+\gamma d)=q(\alpha b'+\gamma d')}}\frac{1}{\sqrt{b}\sqrt{d}\sqrt{b'}\sqrt{d'}}.
$$ 
The inner sum can be rewritten and bounded by 
$$
\frac1{\alpha \gamma }
\sum_{\substack{\lambda, \mu\in I_{\alpha ,\gamma }\\\lambda\ne \mu}}
\left(\frac{b_{\alpha ,\gamma }}{\gamma }-\lambda \right)^{-1/2}
\left(\frac{d_{\alpha ,\gamma }}{\alpha }+\lambda \right)^{-1/2}
\left(\frac{b_{\alpha ,\gamma }}{\gamma }-\mu \right)^{-1/2}
\left(\frac{d_{\alpha ,\gamma }}{\alpha }+\mu \right)^{-1/2}.
$$
For brevity  let $F(\lambda,\mu)$ be denote the summand in the above double sum.  
Observe also that  $\lambda_0  \ge0\ge \lambda_1$ hence $\lambda_0=\lambda_1$ only if their common value is $0$ in which case $I_{\alpha ,\gamma }=\{0\}$. Hence in that case  the summation over $\lambda,\mu$ is empty and the corresponding contribution is zero.

We now assume $\lambda_0  > \lambda_1$.
By developing the sum over $\lambda,\mu$ we obtain
$$
\sum_{\substack{\lambda,\mu \in I_{\alpha ,\gamma }\\ \lambda,\mu \ne \lambda_0,\lambda_1\\ \lambda\ne \mu}}
F(\lambda,\mu)
+
2\sum_{i=0}^1\sum_{\substack{\lambda\in I_{\alpha ,\gamma }\\
\lambda\ne \lambda_0,\lambda_1}}F(\lambda,\lambda_i)
+2F(\lambda_0,\lambda_1).
$$
The first sum involves 
\begin{equation}\label{eqlambda}
 \sum_{\substack{\lambda\in I_{\alpha ,\gamma }\\\lambda\ne \lambda_0,\lambda_1}}
\left(\frac{b_{\alpha ,\gamma }}{\gamma }-\lambda \right)^{-1/2}
\left(\frac{d_{\alpha ,\gamma }}{\alpha }+\lambda \right)^{-1/2}=O(1)
\end{equation}
by the next lemma (with absolute constant). Hence
$$
\sum_{\substack{\lambda,\mu \in I_{\alpha ,\gamma }\\ \lambda\ne \lambda_0,\lambda_1\\\lambda\ne \mu}}
F(\lambda,\mu)\ll1.
$$

\begin{lemma}\label{lem41}
Let $u, v$ two positive real numbers. Then
$$
\sum_{\frac12-v\le j\le u-\frac 12}\frac{1}{\sqrt{(u-j)(v+j})}\le 12.
$$ 
\end{lemma}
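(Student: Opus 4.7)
The plan is to compare the sum to the classical beta integral $\int_0^{L} dt/\sqrt{t(L-t)} = \pi$ with $L := u+v$. Substitute $s = v + j$: each term becomes $f(s) = 1/\sqrt{s(L-s)}$, and as $j$ runs over the integers in $[\tfrac12 - v,\, u - \tfrac12]$, the values of $s$ form a set $S \subset [\tfrac12,\, L - \tfrac12]$ of points spaced exactly by $1$ (in particular $S$ is empty unless $L \ge 1$). Note that $f$ is decreasing on $(0, L/2)$ and increasing on $(L/2, L)$.

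I decompose $S = S^- \cup S^+$ at the midpoint $L/2$, with $S^- = \{s \in S : s \le L/2\}$. For $s \in S^-$, monotonicity of $f$ on $[s - \tfrac12, s] \subset (0, L/2]$ gives
\[
f(s) \;\le\; 2\int_{s - 1/2}^{s} f(t)\, dt.
\]
Since $S$ has spacing $1$, the half-intervals $[s - \tfrac12, s]$ for $s \in S^-$ are pairwise disjoint and lie inside $[0, L/2]$; summing yields $\sum_{s \in S^-} f(s) \le 2 \int_0^{L/2} f(t)\, dt = \pi$. The symmetric bound $\sum_{s \in S^+} f(s) \le \pi$ follows from $f(s) \le 2 \int_s^{s + 1/2} f(t)\, dt$ together with the disjoint half-intervals $[s, s + \tfrac12] \subset [L/2, L]$. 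Adding the two contributions gives $\sum_{s \in S} f(s) \le 2\pi < 12$.

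There is no genuinely delicate step here: the only check is that the comparison intervals never cross the singularities of $f$ at $0$ and $L$, which is precisely guaranteed by the constraint $s \in [\tfrac12,\, L - \tfrac12]$ built into the hypothesis. The constant $12$ in the statement leaves ample slack — the argument in fact yields $2\pi \approx 6.28$, so no more delicate bookkeeping is needed and the looser bound of the statement is what I would record for later use.
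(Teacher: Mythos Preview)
Your proof is correct and in fact cleaner than the paper's. After the substitution $s=v+j$ you compare each term directly to a half-unit piece of the beta integral, exploiting the monotonicity of $f(s)=1/\sqrt{s(L-s)}$ on either side of its minimum at $L/2$; disjointness of the half-intervals then gives the bound $2\pi$.

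The paper proceeds differently: it separates out the term $j=0$ (bounding $1/\sqrt{uv}\le 4$ when $u,v\ge 1/2$), and for $j\ge 1$ splits again at $j=u/2$, replacing one factor by the crude bound $\sqrt{u-j}\ge \sqrt{u/2}$ (respectively $\sqrt{v+j}\ge\sqrt{v+u/2}$) and then comparing the remaining one-variable sum $\sum 1/\sqrt{j}$ to an integral; the negative $j$'s are handled by symmetry. This yields $4+4+4=12$. Your approach avoids the three-case split and the separate treatment of $j=0$, produces the sharper constant $2\pi$, and makes the role of the beta integral transparent; the paper's argument is more hands-on but slightly rougher, which explains the constant $12$ recorded in the statement.
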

\begin{proof}[Proof of the lemma]
The sum splits into 3 terms according to the range covered  by $j$: for $\frac12-v\le j\le 1$, $j=0$ and $1\le j\le u-1$. The variable $j$ takes the value $0$ only if $u,v\ge\frac12$ hence $\frac1{\sqrt{uv}}\le 4$. The first and the third cases are similar. Letting $f_{u,v}(j)$ the summand in the considered sum, one has
\begin{align*}
\sum_{0\le j\le u-\frac12}f_{u,v}(j) & =\sum_{1\le j \le   \frac u2}f_{u,v}(j)+\sum_{\frac u2 <  j \le u-\frac12}f_{u,v}(j)\\
& \le \frac1{\sqrt{\frac u2}}\sum_{1\le j \le   \frac u2}\frac1{\sqrt{v+j}}+
\frac1{\sqrt{v+\frac u2}}\sum_{1\le j <  \frac u2}\frac1{\sqrt{j}}\\
&\le  \frac2{\sqrt{\frac u2}}\sum_{1\le j \le   \frac u2}\frac1{\sqrt{j}}\le \frac2{\sqrt{\frac u2}}\int_{0}^{\frac u2}\frac{dt}{\sqrt{t}}=4,
\end{align*}
and the bound follows.
\end{proof}

Since $\frac{b_{\alpha ,\gamma }}{\gamma }-\lambda_0\ge\frac1\gamma $ and $\frac{d_{\alpha ,\gamma }}{\alpha }+\lambda_0\ge\frac1\alpha $ we get
$$
\sum_{i=0}^1 \sum_{\substack{\lambda\in I_{\alpha ,\gamma }\\\lambda\ne \lambda_0,\lambda_1}}F(\lambda,\lambda_i)
=O(\sqrt{\gamma }+\sqrt{\alpha })
$$
 where we use again \eqref{eqlambda}.
 Finally $F(\lambda_0,\lambda_1)=O(\sqrt{\alpha \gamma })$ since $\lambda_0\ne\lambda_1$. 
 
 \medskip
 This readily gives
 \begin{align*}
 E_3 &\ll \frac{1}{(\log n)^{3/2}}\sum_{q\mid n}\frac1q\sum_{0<\alpha ,\gamma <\frac nq}\frac{1}{\alpha ^{3/2}\gamma ^{3/2}}(1+\sqrt{\alpha }+\sqrt{\gamma }+\sqrt{\alpha \gamma })\\
 & \ll  \sqrt{\log n}\log\log n.
 \end{align*}
 
 \medskip
\noindent
Contribution (iv). Here $n=ab+cd=ab'+c'b$, hence these representations contribute for
$$E_4\ll 
\frac{1}{(\log n)^{3/2}}
\sum_{\substack{a,b,c,d,b',c'\\n=ab+cd=ab'+c'b%\\n/4<ab,cd,ab',c'b<3n/4
}}\frac1{\sqrt{abcdb'c'}}.
$$
For any $a,b$, one has $q=\gcd(a,b)\mid n$. Further $q^2\mid ab<n$ thus $q<\sqrt{n}$. Hence 
$$
E_4 \ll \frac{1}{(\log n)^{3/2}}\sum_{\substack{q\mid n\\q<\sqrt{n}}}\sum_{\substack{a,b\\ \gcd(a,b)=q}}
\frac{\tau(n-ab)}{\sqrt{n-ab}}\sum_{\substack{a\mid k\\b\mid(n-k)}}\frac1{\sqrt{k}\sqrt{n-k}}.
$$
We fix $a,b$ and denote by $K_{a,b}$ the smallest positive integer such that 
$a\mid K_{a,b}$ and $b\mid (n-Kq)$. Let also $\lambda_{a,b}=(qn-q^2K_{a,b})/ab$. 
Then the inner sum in the above inequality is
$$
\frac1q\sum_{0\le \lambda\le \lambda_{a,b}}\left(
K_{a,b}+\lambda\frac{ab}{q^2}\right)^{-1/2}
\left(
\frac nq -K_{a,b}-\lambda\frac{ab}{q^2}\right)^{-1/2}.
$$ 
This sum restricted to $0<\lambda<\lambda_1$ is bounded from Lemma \ref{lem41} by $O\big(\frac{q^2}{ab}\big)$. Letting $f(\lambda)$ the summand in the above sum we  obtain the bound
$$
\sum_{\substack{a\mid k\\b\mid(n-k)}}\frac1{\sqrt{k}\sqrt{n-k}} \ll \frac{q}{ab} +\frac{f(0)+f(\lambda_{a,b})}q.
$$
This yields 2 types of contribution for $E_4$, those given by $f(0)/q$ and $f(\lambda_{a,b})/q$ being treated similarly.
The first one is
$$
E'_4=\frac{1}{(\log n)^{3/2}}\sum_{\substack{q\mid n\\q<\sqrt{n}}}\sum_{\substack{a,b\\ \gcd(a,b)=q}}
\frac{\tau(n-ab)}{\sqrt{n-ab}}\times \frac{q}{ab}
\ll n^{\varepsilon}\sum_{\substack{q\mid n\\q<\sqrt{n}}}\frac1{q^2}
\sum_{h<\frac{n}{q^2}}\frac{1}{h}\left(\frac{n}{q^2}-h\right)^{-1/2}.
$$
Since $q\mid n$ the fractional part of $\frac{n}{q^2}\ne0$ is $\ge \frac 1q$.
Separating the case $h=\big\lfloor \frac{n}{q^2}\big\rfloor$ from the rest of the sum over $h$ we find that it is
$\ll \frac{q^3}{n}+\frac{q\log n}{\sqrt{n}}$. It follows that
$$
E'_4\ll_{\varepsilon} \frac{n^{\varepsilon}}{n}\sum_{\substack{q\mid n\\q<\sqrt{n}}}q
+\frac{n^{\varepsilon}}{\sqrt{n}}\sum_{\substack{q\mid n\\q<\sqrt{n}}}\frac1q
\ll_{\varepsilon} \frac{n^{\varepsilon}}{\sqrt{n}}.
$$
For the remaining contribution and by symmetry  we only have to consider that coming from the term $f(0)$. 
By definition of $K_{a,b}$, the product $\frac{K_{a,b}}{aq^{-1}}\left(\frac{nq^{-1}-K_{a,b}}{bq^{-1}}\right)$ is a positive integer, hence
we have
\begin{align*}
{(\log n)^{3/2}}E''_4 & \ll \sum_{\substack{q\mid n\\q<\sqrt{n}}}\frac1q\sum_{\substack{a,b\\ \gcd(a,b)=q}}
\frac{\tau(n-ab)}{\sqrt{n-ab}}
\times \frac{q}{\sqrt{ab}}\left(
\frac{K_{a,b}}{aq^{-1}}\left(\frac{nq^{-1}-K_{a,b}}{bq^{-1}}
\right)
\right)^{-1/2}\\ 
&\le
\sum_{\substack{q\mid n\\q<\sqrt{n}}}\frac1q\sum_{\substack{a,b\\ \gcd(a,b)=q}}
\frac{\tau(n-ab)}{\sqrt{n-ab}}
\times \frac{q}{\sqrt{ab}}=\sum_{a,b}\frac{\tau(n-ab)}{\sqrt{n-ab}\sqrt{ab}}= \mu_n \log n.
\end{align*}
Hence $E_4\ll \dfrac{\mu_n}{\sqrt{\log n}}$.

%\begin{align*}
%E_4 & \ll \sum_{d\mid n}\sum_{\substack{a,b\\ \gcd(a,b)=d\\n/4<ab<3n/4}}
%\frac{\tau(n-ab)}{\sqrt{n-ab}}\sum_{\substack{n/4<k<3n/4\\a\mid k\\b\mid(n-k)}}\frac1{\sqrt{k}\sqrt{n-k}}\\
%&\ll  \frac1{n^{3/2}}\sum_{d\mid n}\sum_{\substack{a,b\\\gcd(a,b)=d\\n/4<ab<3n/4}}
%\tau(n-ab)\times \frac{nd}{ab}\\
%&\ll \frac1{n^{3/2}}\sum_{d\mid n}\frac1d\sum_{h} {\tau(n-d^2h)}\ll n^{\varepsilon-1/2}.
%\end{align*}

 \medskip
\noindent
Contribution (v). We have $n=ab+cd=ab'+cb$. Hence $b'$ is uniquely determined by the other variables. This yields the bound for the contribution
$$
E_5\ll
\frac{1}{(\log n)^{5/4}}
\sum_{\substack{a,b,c,d,b'\\n=ab+cd=ab'+cb}}\frac1{\sqrt{abcdb'}}\le \frac{1}{(\log n)^{3/2}}
\sum_{\substack{a,b,c,d\\n=ab+cd}}\frac1{\sqrt{abcd}}\ll \frac{\mu_n}{(\log n)^{1/4}}.
$$
We conclude that 
\begin{equation}\label{ubdelta}
\Delta_n\ll\sum_{i=1}^5 E_i\ll \frac{\mu_n}{(\log n)^{1/4}}.
\end{equation} 

\medskip
It thus follows that if $3c^4\pi/4>1$, almost surely the random set $A $ has  counting function $A(n)\sim 2c n^{1/2}(\log n)^{-1/4}$ and satisfies $\mathbb{N}\setminus(A^2+A^2)$ is finite. By completing   if necessary $A$ by a finite number of  nonnegative integers, we get the announced result in Theorem \ref{thm13}: we state it under the sharpest following form (the constant is the best possible provided by this probabilistic approach):
   
   \begin{thm} Let $c>(\frac{4\pi}3)^{1/4}$.
   There exists a set of integers
   $A$ such that $A^2+A^2=\mathbb{N}_0$ and  $A(X)\sim \frac{2c\sqrt{X}}{(\log X)^{1/4}}$ as $X\to\infty$.
   \end{thm}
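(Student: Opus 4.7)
The plan is to assemble the ingredients already developed in Section~\ref{prob_S2+S2} into a single probabilistic existence argument. Take the product probability measure $\mathbb{P} = \prod_{a \in \N} p_a$ on $Y = \{0,1\}^{\N}$ with $p_a(\{1\}) = c/(\sqrt{a}(\log(a+1))^{1/4})$ for a fixed $c > (4\pi/3)^{1/4}$; shrinking $c$ if needed preserves the hypothesis, so we may assume $c<1$ and the measure is well-defined. By Lemma~\ref{S2S2lambda} the partial sums satisfy $\lambda_n \sim 2c\sqrt{n}(\log n)^{-1/4}$, which grows much faster than $\log^2 n$, so Corollary~\ref{card22} yields that with probability one the random set $A$ satisfies the counting asymptotic $A(X) \sim 2c\sqrt{X}/(\log X)^{1/4}$.

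The core of the proof is showing that, almost surely, $\N_0 \smallsetminus (A^2+A^2)$ is finite. For this I would apply Lemma~\ref{R(n)gen} to the random variable $R(n)$ counting (ordered, distinct, normalized) representations $n = ab + cd$ with $a,b,c,d \in A$; it suffices to establish $\mathbb{P}(R(n)=0) \ll n^{-1-\eta}$ for some fixed $\eta > 0$. Since $R(n)$ depends only on the first $n$ coordinates and $\mathbf{1}_{R(n)\neq 0}$ is monotone, Lemma~\ref{transfert} lets me replace $\mathbb{P}$ by the uniformized measure $\mathbb{P}'_n$ with $p'_a(\{1\}) = c/(\sqrt{a}(\log(n+1))^{1/4})$, after which Janson's inequality (Lemma~\ref{Janson}) gives $\mathbb{P}'_n(R(n) = 0) \leq \exp(-\mu_n + \Delta_n)$.

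The two estimates to verify are then $\mu_n \geq (3c^4/(4\pi) + o(1))\log n$ and $\Delta_n = o(\log n)$. The lower bound on $\mu_n$ follows from the identity \eqref{mun} combined with Lemma~\ref{estilem}, after discarding the $O(\log\log n)$ contribution of quadruples with a repeated coordinate via the explicit small-range estimates given in the text. The upper bound on $\Delta_n$ is the technical heart: one must partition pairs $(a,b,c,d) \sim (a',b',c',d')$ into the five configuration types (i)--(v) and show each contribution $E_i$ is $o(\log n)$. Types (i), (ii), (iii) are handled by divisor-sum and Diophantine manipulations (with the key intermediate bound $\sum_{a} S_a^2/\sqrt{a} \ll \log^2 n$ for case (i), the Lemma~\ref{lem41} count for case (iii), and straightforward $n^{\varepsilon}$-bounds for case (ii)); types (iv) and (v) give $\Delta_n \ll \mu_n/(\log n)^{1/4}$ directly by comparison with $\mu_n$.

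With these in hand, the hypothesis $c > (4\pi/3)^{1/4}$ gives $3c^4/(4\pi) > 1$, hence $-\mu_n + \Delta_n \leq -(1+\eta)\log n$ for some $\eta > 0$ and $n$ large. Lemma~\ref{R(n)gen} then provides a set $A$ of the prescribed density such that $R(n) \neq 0$, i.e.\ $n \in A^2 + A^2$, for every sufficiently large $n$; adjoining the finitely many missing nonnegative integers to $A$ (which affects neither the asymptotic of $A(X)$ nor the property itself) produces $A^2 + A^2 = \N_0$. The main obstacle I anticipate is the bookkeeping in case (iii), where the fiber $n/q = \alpha b + \gamma d = \alpha b' + \gamma d'$ must be analyzed lattice-point by lattice-point, with the boundary indices $\lambda_0, \lambda_1$ treated separately via Lemma~\ref{lem41} to avoid losing a logarithmic factor that would overwhelm the $1/(\log n)^{1/4}$ savings; cases (i) and (iv) are also delicate in that the divisor-closeness contributions require isolating the extremal values of $b$ (resp.\ of $K_{a,b}$) from the bulk sum.
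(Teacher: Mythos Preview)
Your proposal is correct and follows the paper's argument essentially step for step: the same probability measure, the same reduction via Lemma~\ref{transfert} to $\mathbb{P}'_n$, Janson's inequality, the lower bound \eqref{mun} on $\mu_n$ via Lemma~\ref{estilem}, and the five-case decomposition of $\Delta_n$ with the final bound \eqref{ubdelta}. One small slip: you cannot ``shrink $c$'' to make $c<1$, since $(4\pi/3)^{1/4}\approx 1.43>1$; the paper itself is careless here, and the intended fix is simply to set $p_a(\{1\})=\min\bigl(1,\,c\,a^{-1/2}(\log(a+1))^{-1/4}\bigr)$, which alters only finitely many coordinates and leaves every asymptotic estimate unchanged.
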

   
\begin{rmk}
Let $l\in\N$.
Theorem \ref{limsuplb} can be extended and Theorem \ref{thm13} can be straight generalized to the sum-product set 
 $$
 \Sigma_{l,2}(A):=\underbrace{A+\cdots+A}_{l \text{ times}}+A^2+A^2.
 $$
 Namely there exists a set $A\subset\mathbb{N}_0$ such that $A(X)\ll \frac{X^{1/(l+2)}}{(\log X)^{1/(l+4)}}$ and $ \Sigma_{l,2}(A)=\mathbb{N}_0$. We do not provide the complete proof, we only point out the main points. Since we are no longer concerned with the constant, we may assume
 that all the $l+2$ summands in $n=x_1+\cdots+x_l+ab+cd$ satisfy $x_i,ab,cd\asymp n$. The elementary
 probability for $x\in\mathbb{N}$ is given by $p'_x=\dfrac{c}{x^{(l+1)/(l+2)}(\log n)^{1/(l+4)}}$.
 Then using plain notation the expectation
 $\mu_n=\E(R(n))$  is
$$
 \mu_n\gg \frac1{n^{(l+2)(l+1)/(l+2)}\log n}\sum_{\substack{x_1,\dots,x_l,h,k\asymp n\\n=x_1+\cdots+x_l+h+k}}\tau(h)\tau(k)\gg \log n.
$$
The estimation of $\Delta_n$ concerns variable coincidences inside both representations
$$n=x_1+\cdots+x_l+ab+cd=x'_1+\cdots+x'_l+a'b'+c'd'.$$
Each collision $x_i$ with some variable in the second representation induces a lesser degree of freedom in the summation with the counterpart that a factor $n^{-(l-1)/l}$  
is cleared. There could be an additional $n^{\varepsilon}$ coming from the divisor function when for instance $x_i=a'$. It gives  a contribution to $\Delta_n$ being $\ll n^{-1/l+\varepsilon}\mu_n$. 
\\
In case of a unique collision among $a,b,c,d$ and $a',b',c',d'$, we consider $n=x_1+\cdots+x_l+ab+cd=x'_1+\cdots+x'_l+ab'+c'd'$. Letting $h=ab$, $k=cd$, $h'=ab'$ and $k'=c'd'$, the related contribution reduces to
$$
\ll \frac1{n^4(\log n)^{2-1/(l+4)}}\sum_{h,h',k,k'}\tau(k)\tau(k')\sum_{a\mid \gcd(h,k)}a^{(l+1)/(l+2)}\\
\ll \frac{(\log n)^{1/(l+4)}}{n^2}\sum_{h,h'}\sum_{a\mid \gcd(h,k)}a^{(l+1)/(l+2)}
$$
Inverting the summations gives $O(n^2)$ for the triple sum, hence a total contribution $\ll (\log n)^{1/(l+4)}=o(\mu_n)$. The remaining cases with $2$, $3$ or $4$ collisions are easy to consider and yields smaller contributions. We infer $\Delta_n=o(\mu_n)$.
\end{rmk}

\begin{rmk} Let $\delta > 0.$
The arguments used to prove Theorem~\ref{thm13} can be used to prove the existence of $A \subset \N$ such that for any sufficiently large $n\in \N$ and $x\in \R,$ we have

$$ n= ab + cd, \; a,b, c, d \in A, \text{ with }\; d \leq n^{\delta}\; \text{ and } A(x)\ll_{\delta} \frac{x^{1/2}}{\log^{1/4}x}.$$
We do not provide the complete proof and only point out the main points. Since we are no longer concerned with the constant, we may assume
 that in $n= ab+cd$ satisfy $ab,cd\asymp n$ and $d \leq n^{\delta}.$ The elementary
 probability for $x\in\mathbb{N}$ is given by~$p'_x~=~\dfrac{c}{x^{1/2}(\log n)^{1/4}}$.
Let 

$$ R(n) = \sideset{}{'}\sum_{ab+cd =n} \xi_a\xi_b\xi_c\xi_d,$$
where the dash in the above summation indicates the restriction $a,b,c, d$ being distinct and 
$ab,cd\asymp n$ with $d \leq n^{\delta}.$ We have the following lower bound   
$$\mu_n:=\E(R(n)) \gg \frac{c^4}{n\log n} \sum_{h+k = n, h,k \asymp n}\tau(h) \tau_{\delta}(k), $$
where $\tau_{\delta}(k) = \sum_{d| k, d \leq n^{\delta}} 1.$ Assuming that $\delta \leq 1/2,$ using the lower bound $\tau(h) \geq \sum_{ a|h, a \leq n^{1/4}}1$ we obtain that $\mu_n \geq c(\delta) c^4 \log n,$ where $c(\delta) > 0$ is a constant depending only upon $\delta.$  We choose $c$ such that $c(\delta) c^4 > 1.$ For the purpose of obtaining an upper bound for $\Delta_n$, we may ignore the condition that $d \leq n^{\delta}$ and use directly the bound provided by~\eqref{ubdelta} to obtain that $\Delta_n \ll \log^{3/4}n =o(\mu_n).$
\end{rmk}


\begin{thebibliography}{99}
   
    %\bibitem{AS} Alon, N. and Spencer, J.: \emph{The probabilistic method, 2nd edition}. Wiley-Blackwell, 2000.
    
    \bibitem{BO} Bourgain, J.: More on the sum-product phenomenon in prime fields and its application, \emph{Int. J. Number Theory} \textbf{1} (2005), 1--32.
    
% \bibitem{Ca}  Cassels, J.W.S.:  \"Uber Basen der nat\"urlichen Zahlenreihe, \emph{Abh. Math. Sem. Univ. Hamburg}\textbf{21} (1957), 247--257.
   
   \bibitem{HR} Halberstam, H. and Roth, K.F.: \emph{Sequences}. Springer-Verlag New York, 1983.
   
   \bibitem{HH0} Hegyv\'ari, N.  and Hennecart, F.: Explicit construction of extractors and expanders, \emph{Acta Arith.} 140 (2009), 233--249.
   
   \bibitem{HH} Hegyv\'ari, N.  and Hennecart, F.: A note on the size of the set $A^2 +A$. \emph{Ramanujan
J.} \textbf{46} (2018), 357--372.

\bibitem{In} Ingham, A.E.: Some asymptotic formulae in the theory of numbers. \emph{J. London Math. Soc.} \textbf{2} (1927), 202--208.
\bibitem{Pa} Pach, P.P. and  S\'andor, C.: Multiplicative Bases and an Erd\H os Problem. \emph{Combinatorica} \textbf{38} (2018), no. 5, 1175--1203.

\bibitem{Ru} Ruzsa I.Z.: On the additive completion of the primes. \emph{Acta Arith.} \textbf{86} (1998), 269--275.
   
   
   \bibitem{TV} Tao, T., Vu, V.: \emph{Additive Combinatorics}. Cambridge University Press, 2009.
   \end{thebibliography}
\end{document}